\begin{document}
\title{Sampling with replacement vs Poisson sampling:\\
  a comparative study in optimal subsampling
\footnote{The first two authors contributed equally to this work.}}
\author{Jing Wang
  \footnote{Department of Statistics, University of Connecticut, USA},$\quad$
  Jiahui Zou
  \footnote{School of Statistics, Capital University of Economics and Business, Beijing 100070, China}, $\quad$and$\quad$
  HaiYing Wang$^*$}
\maketitle

\begin{abstract}
Faced with massive data, subsampling is a commonly used technique to improve computational efficiency, and using nonuniform subsampling probabilities is an effective approach to improve estimation efficiency. For computational efficiency, subsampling is often implemented with replacement or through Poisson subsampling. However, no rigorous investigation has been performed to study the difference between the two subsampling procedures such as their estimation efficiency and computational convenience. This paper performs a comparative study on these two different sampling procedures. In the context of maximizing a general target function, we first derive asymptotic distributions for estimators obtained from the two sampling procedures. The results show that the Poisson subsampling may have a higher estimation efficiency. Based on the asymptotic distributions for both subsampling with replacement and Poisson subsampling, we derive optimal subsampling probabilities that minimize the variance functions of the subsampling estimators. These subsampling probabilities further reveal the similarities and differences between subsampling with replacement and Poisson subsampling. The theoretical characterizations and comparisons on the two subsampling procedures provide guidance to select a more appropriate subsampling approach in practice. Furthermore, practically implementable algorithms are proposed based on the optimal structural results, which are evaluated through both theoretical and empirical analyses.

\textit{keywords}: Algorithmic sampling; Asymptotic Distribution; Informative Sample; Massive Data. %
\end{abstract}

\section{Introduction}
\label{sec:introduction}
With fast development of technology, data collecting is becoming easier and easier, and the volumes of available data sets are increasing exponentially. To extract useful information from these massive data, a major challenge lies with the thirst for computing resources. Subsampling is a commonly used technique to reduce computational burden, and it has been an important topic in computer science and statistics with a long standing of literature, such as \cite{drineas2006fast1, drineas2006fast2, drineas2006fast3, mahoney2009cur, Drineas:11, mahoney2011randomized, %
  clarkson2013low, kleiner2014scalable, mcwilliams2014fast, yang2015randomized}.

To improve the estimation efficiency\footnote{The estimation efficiency is different from that discussed in Chapter 8 of \cite{Vaart:98}, which focuses on achieving the asymptotic lower bound of regular estimators. Here we focus on taking a subsample that better approximates the full data estimator, and we consider it with computational efficiency simultaneously.}, nonuniform subsampling probabilities are often used so that more informative data points are sampled with higher probabilities. A popular choice is the leverage-based subsampling in which the subsampling distribution is the normalized statistical leverage scores of the design matrix \citep{Drineas:12,PingMa2015-JMLR}. 
\cite{yang2015explicit} showed that if statistical leverage scores are very nonuniform, then using their normalized square roots as the subsampling distribution yields better approximation. For logistic regression, \cite{WangZhuMa2017} derived an optimal subsampling distribution that minimizes the asymptotic variance of the subsampling estimator, and \cite{wang2019more} further developed a more efficient estimation approach based on the selected subsample. 
\cite{Ting_NIPS2018_7623} investigated optimal subsampling with influence functions.
\cite{WangYangStufken2018} proposed a method called information-based optimal subdata selection
which selects data points deterministically for linear regression.
The subsampling approach has a close connection to the technique of coreset approximation \citep{campbell2018bayesian,campbell2019automated}, which also use a subset of the data with associated weights instead of the full data to reduce calculations. The coreset approximation is often used in Bayes analysis and the problem is often to better approximate the objective function in a functional space, while this paper focuses on approximating the full data estimator.

For computational efficiency, subsampling is often implemented with replacement or through Poisson subsampling. %
Subsampling with replacement needs to use all subsampling probabilities simultaneously to generate random numbers from a multinomial distribution. The resultant subsample observations are independent and identically distributed (i.i.d.) conditional on the full data, but their unconditional distributions are not independent. Poisson subsampling considers each data point and determines if it should be included in the subsample by generating a random number from the uniform distribution. If the subsampling probabilities in Poisson subsample are all equal, then the subsampling procedure is also called the Bernoulli subsampling \citep{sarndal2003model}. 
For Poisson subsampling, the resultant subsample observations do not have identical conditional distributions, but their unconditional distributions can be independent.

Although subsampling with replacement and Poisson subsampling are commonly used in practice, no rigorous investigation has been performed to compare them, especially in the context of optimal subsampling. When they perform similarly and when one is preferable to the other?
This paper studies this topic, and has the following major contributions. 1) In the context when an estimator is obtained by maximizing a target function, we first derive conditional and unconditional asymptotic distributions for estimators from both subsampling with replacement and Poisson subsampling. These asymptotic distributions accurately characterize the subsampling approximation errors, and we derive general structure results of optimal subsampling probabilities to minimize these errors for the two subsampling procedures. 
2) We systematically compare subsampling with replacement and Poisson subsampling, both theoretically and empirically. %
We identify conditions when %
the asymptotic distributions for subsampling with replacement and for Poisson subsampling are the same, and when they are different. We also discuss the similarity and difference for the two subsampling procedures in terms of the structural results of optimal subsampling probabilities. 
3) Based on the optimal subsampling probabilities, we propose practical algorithms and evaluate their performance through both theoretical analysis and numerical experiments. 

It is worth mentioning that our investigation views subsampling as a computational tool and investigates it within a statistical framework. For computer scientists, subsampling is a commonly used randomized device to speed up computing by using a subsample estimator to approximate the full data estimator \citep[e.g.,][]{mcwilliams2014fast,woodruff2014sketching}, while for statisticians resampling is widely adopted in exchangeable bootstrap schemes to build confidence regions \citep[e.g.,][]{shao1995jackknife,Poli:Roma:Wolf:subs:1999}. This paper lies in the middle of these two communities. We derive asymptotic distributions of subsampling estimators in a similar fashion to existing literature on bootstrap. However, our purpose is not to establish the bootstrap consistency. %
Instead, we utilize the asymptotic distributions to develop better subsampling probabilities so that the subsample estimator better approximate the full data estimator. In addition, we focus on data dependent subsampling probabilities for which existing investigations and techniques on bootstrap do not apply because they require data independent and exchangeable sampling weights \citep{praestgaard1993exchangeably,cheng2010bootstrap}. 
\black

 The rest of the paper is organized as follows. 
We present the model setup and asymptotic distributions in Section~\ref{sec:model setup}. In Section~\ref{sec:optim-subs-prob}, we derive optimal subsampling probabilities and propose practical algorithms. We will also obtain theoretical properties for the practical algorithms. 
In Section~\ref{sec:numer-exper}, we %
perform numerical experiments demonstrating the performance of the proposed methods. Proofs %
of our theoretical results are provided in the appendix.

Here are some notation conventions to be used in the paper. We use $^*$ to indicate subsample quantities; use $\hat{}$ to indicate full data estimator; use $\tilde{}$ to indicate subsample estimator; use $_R$ and $_P$ to indicate subsampling with replacement and Poisson subsampling, respectively; use $\dot{m}$ and $\ddot{m}$ to denote the gradient and Hessian matrix of a function $m$ with respect to the parameter $\btheta$; use $\op$ or $\Op$ to denote a sequence that converges to zero in probability or is bounded in probability, respectively; use $\rightsquigarrow$ to denote convergence in distribution; use $\|\bv\|$ to denote the Euclidean norm of a vector $\bv$; and use $\|\A\|$ to denote the Frobenius  norm of a matrix $\A$.

\section{Problem setup and asymptotic distributions}
\label{sec:model setup}
Suppose that a set of training data $\Dn=\{Z_i\}_{i=1}^n$ consists of  independent observations from the distribution that generates $Z$. To estimate some parameter $\btheta\in\mathbb{R}^d$ about the data distribution, we want to calculate $\htheta_n$, the maximizer of
\begin{equation*}%
  \M_n(\btheta)=\onen\sumn\m(Z_i,\btheta).
\end{equation*}
Here the dimension of $Z_i$ does not have to be the same as $\btheta$, e.g., in softmax regression.
Usually, there is no closed-form solution to $\htheta_n$, and an iterative algorithm is required to find the solution numerically. 
For massive data, iterative calculations on the full data of size $n$ are often too expensive, so subsampling is adopted to produce a subsampling estimator $\ttheta$ to approximate $\htheta_n$. Nonuniform subsampling probabilities are often used to improve the estimation efficiency.

Let $\bpi=\{\pi_{n,i}\}_{i=1}^n$ be a subsampling distribution such that $\pi_{n,i}>0$ and $\sumn\pi_{n,i}=1$. 
For Poisson subsampling, we further assume that $\pi_{n,i}\le s_n^{-1}$, where $s_n$ is the expected subsample size. As stated early, we use $^*$ to indicate quantities with randomness due to subsampling.
For instance, let $Z_1^*, ..., Z_{s_n}^*$ denote the resampled sample and let $\pi_{n,1}^*,...,\pi_{n,s_n}^*$ be the corresponding resampled %
  subsampling probabilities.

We present the general subsampling estimators $\ttheta_{s_n,R}$ based on subsampling with replacement and $\ttheta_{s_n,P}$ based on Poisson subsampling, comparatively, in the following Algorithm~\ref{alg:0}. %
\begin{algorithm}[H]%
  \caption{Subsampling with replacement vs Poisson subsampling}
  \label{alg:0}
\begin{minipage}[t]{.46\textwidth}
  {\bf Sampling with replacement}
  \begin{itemize}[leftmargin=3.5mm]
  \item Calculate $\bpi=\{\pi_{n,i}\}_{i=1}^n$ %
    based on $\Dn$;
  \item generate $s_n$ independent random numbers from multinomial distribution with $\bpi$ to determine a subsample
    $\mathcal{D}_{s_n}^*=\{Z_1^*,Z_2^*,...,Z_{s_n}^*\}$;
  \item record $\{\pi^ {*}_{n,1}, \pi^{*}_{n,2},...,\pi^{*}_{n,s_n}\}$ in the subsample;
  \item obtain the subsample estimator %
\begin{equation}\label{eq:27}
  \ttheta_{s_n,R}=\arg\max_{\btheta}
  \sumrr\frac{\m(Z_i^*,\btheta)}{ns_n\pi_{n,i}^*}.
\end{equation}
\end{itemize}
\end{minipage}\hspace{5mm}
\begin{minipage}[t]{0.46\textwidth}
   {\bf Poisson Sampling:}
\begin{itemize}[leftmargin=5.5mm]
\item For each $i=1, ..., n$, calculate an individual $\pi_{n,i}$ such that $\pi_{n,i}\le s_n^{-1}$ based on $Z_i$;
  \item generate $u_i\sim U(0,1)$; 
  \item if $u_i \le s_n\pi_{n,i}$, include $Z_i$ in the subsample and record $\pi_{n,i}$; %
  \item obtain the subsample estimator
\begin{equation}\label{eq:28}
\ttheta_{s_n,P}=\arg\max_{\btheta}
\sumr\frac{\m(Z_i^*,\btheta)}{ns_n^*\pi_{n,i}^*}.
\end{equation}
\end{itemize}
\end{minipage}
\end{algorithm}

\begin{remark}\normalfont
  In Algorithm~\ref{alg:0}, we see that  subsampling with replacement requires to access the whole sampling distribution $\bpi=\{\pi_{n,i}\}_{i=1}^n$, i.e., all $\pi_{n,i}$'s, because they are the parameters in the multinomial distribution from which random numbers are generated. On the other hand, Poisson subsampling only needs to access one $\pi_{n,i}$ in each sampling consideration. This makes the Poisson subsampling more convenient to implement, especially when the available memory cannot hold all $\pi_{n,i}$'s or in distributed computing platforms.   
  For subsampling with replacement, the subsample size is equal to $s_n$ and there may be replicates in the subsample. Here $\pi_{n,i}$ is the probability that observation $Z_i$ is selected when only one data point is selected, and the probability to include $Z_i$ in the subsample of size $s_n$ is $1-(1-\pi_{n,i})^{s_n}$, which is smaller than $s_n\pi_{n,i}$.
  For Poisson subsampling, the subsample size $s_n^*$ is random with $\Exp(s_n^*)=s_n$; there is no replicates in the subsample; %
 and $s_n\pi_{n,i}$ is the probability of including $Z_i$ in the subsample of expected size $s_n$. %
\end{remark}

\begin{remark}\normalfont
  Another way of implementing Poisson subsampling is to remove the condition of $\pi_{n,i}\le s_n^{-1}$ and replace $\pi_{n,i}$ with $\min(s_n\pi_{n,i},1)$. The expected subsample size from this approach would be difficult to determine as $\pi_{n,i}$'s are often calculated on the go as scanning through the full data. We only know that the expected subsample size would be smaller than $s_n$. %
  In this paper, we focus on the Poisson subsampling procedure described in Algorithm~\ref{alg:0}.  
\end{remark}

We now derive asymptotic properties of $\ttheta_{s_n,R}$ in (\ref{eq:27}) and $\ttheta_{s_n,P}$ in (\ref{eq:28}), respectively, to compare their estimation efficiency theoretically. We need some regularity assumptions listed below. 

\begin{assumption}\label{ass:1}
  The parameter $\btheta$ belongs to a compact set.
\end{assumption}
\begin{assumption}\label{ass:2}
  The function $m(Z,\btheta)$ is a concave function of $\btheta$ with a unique and finite maximum, and it satisfies that $\Exp\{ m^2(Z,\btheta)\}<\infty$ for any $\btheta$.
\end{assumption}

\begin{assumption}\label{ass:3}
The matrix $-\Exp\{\ddm(Z,\btheta)\}$ %
is positive-definite, %
$\Exp\{\ddm_{k,l}^2(Z,\btheta)\}<\infty$, and $\ddm(Z,\btheta)$ is Lipschitz continuous in $\btheta$ so that %
there exists a function $\psi(z)$ with $\Exp\{\psi^2(Z)\}<\infty$ and for every $\btheta_1$ and $\btheta_2$, %
$|\ddm_{k,l}(z, \btheta_1)-\ddm_{k,l}(z, \btheta_2)|\leq \psi(z)\|\btheta_1-\btheta_2\|$, $k,l=1,2,...,d$.
\end{assumption}

\begin{assumption}\label{ass:4}
  The matrix $\Lambda(\btheta)=\Exp\{\dm(Z,\btheta)\dm\tp(Z,\btheta)\}$ is positive-definite, and for $\btheta$ in the neighborhood of $\htheta_n$, $\onen\sumn\|\dm(Z_i,\btheta)\|^{4}=\Op$. %

\end{assumption}

\begin{assumption}\label{ass:6}
The sampling distribution $\bpi$ satisfies that  $\max_{i=1,...,n}(n\pi_{n,i})^{-1}=O_P(1)$.
\end{assumption}
Assumptions~\ref{ass:1} and \ref{ass:2} are very mild, and they assure that the target function has a finite and unique maximum. Assumptions~\ref{ass:3} and \ref{ass:4} impose some constraints on the Hessian matrix and gradient of $m(Z,\btheta)$; Assumption~\ref{ass:3} is used to prove the consistency of subsample estimators and Assumption~\ref{ass:4} is used to establish the asymptotic normality of subsample estimators. Assumption~\ref{ass:6} essentially requires that the minimum subsampling probability is at the same order of $\onen$ in probability. Here, $\pi_{n,i}$ can be random as it is allowed to depend on the data, so the notation $O_P(1)$ is used. This assumption is required so that the  objective function based on a subsample would not be dominated by data points with very small $\pi_{n,i}$'s. Very small $\pi_{n,i}$'s %
may not matter %
when characterizing the worst-case bound, e.g., \cite{Drineas:12}, but they do %
impact the statistical properties of subsampling algorithms. %
Due to this, \cite{PingMa2015-JMLR} proposed the ``shrinkage'' leverage scores to prevent the statistical performance of algorithmic leveraging algorithm from being deteriorated by very small leverage scores. 

Let $\btheta_0=\arg\max_{\btheta}\Exp\{\m(Z,\btheta)\}$ be the true parameter that generates the data. %
The following proposition is a known result \citep[see, e.g., Chapter 5 of][]{Vaart:98}.

\begin{proposition}\label{prop1}
Under Assumptions~\ref{ass:1} and \ref{ass:3}, if $\Lambda(\btheta)$ is positive-definite (the first part of Assumption~\ref{ass:4}), then
\begin{equation*}
  \sqrt{n}(\htheta_n-\btheta_0)
  \rightsquigarrow \Nor\{\0, V(\btheta_0)\},
\end{equation*}
where $V(\btheta)=\ddM^{-1}(\btheta)\Lambda(\btheta)\ddM^{-1}(\btheta)$ and  $M(\btheta)=\Exp\{m(\btheta, Z)\}$. %
\end{proposition}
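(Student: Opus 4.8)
The plan is to follow the classical route for M-estimators: establish consistency of $\htheta_n$, linearize the first-order condition around $\btheta_0$ by a Taylor expansion, and then combine a central limit theorem for the score with a law of large numbers for the Hessian through Slutsky's theorem. For consistency, the compactness of the parameter set (Assumption~\ref{ass:1}) together with the concavity and unique maximum of $m(Z,\btheta)$ (Assumption~\ref{ass:2}) makes $M(\btheta)=\Exp\{m(Z,\btheta)\}$ uniquely (and hence well-separatedly) maximized at $\btheta_0$; a uniform law of large numbers then gives $\sup_{\btheta}|\M_n(\btheta)-M(\btheta)|\to_P0$, and standard argmax consistency yields $\htheta_n\to_P\btheta_0$.

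Because $\htheta_n$ maximizes the concave $\M_n$, the first-order condition $\onen\sumn\dm(Z_i,\htheta_n)=\0$ holds with probability tending to one. Expanding the score about $\btheta_0$, with the Hessian evaluated at a point $\bar\btheta$ on the segment joining $\htheta_n$ and $\btheta_0$ (componentwise, to be precise), I would obtain
\begin{equation*}
  \0=\onen\sumn\dm(Z_i,\btheta_0)
  +\Big\{\onen\sumn\ddm(Z_i,\bar\btheta)\Big\}(\htheta_n-\btheta_0),
\end{equation*}
and after rearranging,
\begin{equation*}
  \sqrt{n}(\htheta_n-\btheta_0)
  =-\Big\{\onen\sumn\ddm(Z_i,\bar\btheta)\Big\}^{-1}
  \frac{1}{\sqrt{n}}\sumn\dm(Z_i,\btheta_0).
\end{equation*}

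It then remains to treat the two factors separately. Since $\btheta_0$ maximizes $M$, one has $\Exp\{\dm(Z,\btheta_0)\}=\0$, so the multivariate central limit theorem with the positive-definite $\Lambda(\btheta_0)$ of Assumption~\ref{ass:4} gives $n^{-1/2}\sumn\dm(Z_i,\btheta_0)\rightsquigarrow\Nor\{\0,\Lambda(\btheta_0)\}$. For the Hessian factor, the consistency $\bar\btheta\to_P\btheta_0$ (forced by $\htheta_n\to_P\btheta_0$) combined with the Lipschitz bound and the envelope condition $\Exp\{\psi^2(Z)\}<\infty$ of Assumption~\ref{ass:3} lets me replace $\ddm(Z_i,\bar\btheta)$ by $\ddm(Z_i,\btheta_0)$ up to an $\op$ term, whence a law of large numbers yields $\onen\sumn\ddm(Z_i,\bar\btheta)\to_P\ddM(\btheta_0)$; this limit is invertible because $-\ddM(\btheta_0)$ is positive-definite (Assumption~\ref{ass:3}). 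Slutsky's theorem then delivers $\sqrt{n}(\htheta_n-\btheta_0)\rightsquigarrow\Nor\{\0,\ddM^{-1}(\btheta_0)\Lambda(\btheta_0)\ddM^{-1}(\btheta_0)\}=\Nor\{\0,V(\btheta_0)\}$.

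I expect the main obstacle to be the uniform control of the sample Hessian at the random intermediate argument $\bar\btheta$: one must show that $\onen\sumn\ddm(Z_i,\cdot)$ converges to $\ddM(\cdot)$ uniformly over a shrinking neighborhood of $\btheta_0$, which is precisely where the Lipschitz continuity of $\ddm$ and the square-integrable envelope $\psi$ in Assumption~\ref{ass:3} are essential. The remaining steps are routine applications of the law of large numbers, the central limit theorem, and Slutsky's lemma, matching the cited treatment in Chapter~5 of \cite{Vaart:98}.
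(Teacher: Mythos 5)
Your proposal is correct and takes essentially the same route as the paper, which offers no proof of its own for Proposition~\ref{prop1} but cites Chapter~5 of \cite{Vaart:98}: your argument (argmax consistency, componentwise Taylor expansion of the score, CLT for $n^{-1/2}\sumn\dm(Z_i,\btheta_0)$, Lipschitz/envelope control of the Hessian at the intermediate point, Slutsky) is precisely that classical treatment. The only small remark is that your consistency step invokes Assumption~\ref{ass:2}, which is not listed among the proposition's stated hypotheses; this is harmless, since Assumption~\ref{ass:3} (positive-definiteness of $-\Exp\{\ddm(Z,\btheta)\}$) already yields strict concavity of $M(\btheta)$ and hence the unique, well-separated maximum your argmax argument requires.
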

\black

To assess the distributional properties of subsample estimators, we need to derive the distribution asymptotically, i.e., to assume that $s_n\rightarrow\infty$ and $n\rightarrow\infty$. We assume that $s_n<n$, because a primary goal of subsampling is to reduce the subsample size, but we do not require $s_n=o(n)$.

We define some notations for convergence in conditional distribution and probability before presenting our results.
Let $\Delta_{n,s_n}$ be a vector function of a subsample of size $s_n$ from the full data $\Dn$, e.g., a subsample estimator. We say that $\Delta_{n,s_n}$ converges in conditional probability given $\Dn$ in probability and write it as $\Delta_{n,s_n}=\opD(1)$, if $\Pr(\|\Delta_{n,s_n}\|>\delta|\Dn)=\op$ for any $\delta>0$; this can be equivalently stated as for any $\delta>0$ and $\epsilon>0$, as $s_n\rightarrow\infty$ and $n\rightarrow\infty$,
\begin{equation*}
  \Pr\Big\{\Pr(\|\Delta_{n,s_n}\|>\delta|\Dn) \le \epsilon\Big\}\rightarrow1.
\end{equation*}
We say that $\Delta_{n,s_n}$ is bounded in conditional probability given $\Dn$ in probability and write it as $\Delta_{n,s_n}=\OpD(1)$, if for any $\epsilon>0$ there exists a $0<K_{\epsilon}<\infty$ such that
as $s_n\rightarrow\infty$ and $n\rightarrow\infty$,
\begin{equation*}
  \Pr\big\{\Pr(\|\Delta_{n,s_n}\|>K_{\epsilon}|\Dn)
  \le\epsilon\big\}\rightarrow1.
\end{equation*}
We say that $\Delta_{n,s_n}$ (of dimension $d$) converges in conditional distribution to a continuous random vector $U$ given $\Dn$ in probability and denote this as $\Delta_{n,s_n}\cvdD U$, if $\Pr(\Delta_{n,s_n}\le\x|\Dn)-\Pr(U\le\x)=\op$ for every $\x\in\mathbb{R}^d$;
this can also be stated as that for any $\epsilon>0$ and every $\x\in\mathbb{R}^d$, as $s_n\rightarrow\infty$ and $n\rightarrow\infty$,
\begin{equation*}
  \Pr\Big\{\Big|\Pr(\Delta_{n,s_n}\le\x|\Dn)-\Pr(U\le\x)\Big|
  \le\epsilon\Big\}\rightarrow1.
\end{equation*}
\begin{proposition}\label{prop2}
  The following results hold for conditional convergence.
  \begin{enumerate}[(a)]
  \item \label{item:1} If $\Delta_{n,s_n}=\opD(1)$ then $\Delta_{n,s_n}=\op$, and vice versa.
  \item \label{item:2} If $\Delta_{n,s_n}=\OpD(1)$ then $\Delta_{n,s_n}=\Op$, and vice versa.
  \item \label{item:3} If $\Delta_{n,s_n}\cvdD U$ then $\Delta_{n,s_n}\cvd U$, and vice versa.
  \end{enumerate}
\end{proposition}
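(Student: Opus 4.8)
The plan is to reduce all six implications to elementary facts about $[0,1]$-valued random variables via the tower property. For fixed $\delta$, $K$, and $\x$, set $g_n=\Pr(\|\Delta_{n,s_n}\|>\delta\mid\Dn)$, $h_n=\Pr(\|\Delta_{n,s_n}\|>K\mid\Dn)$, and $F_n(\x)=\Pr(\Delta_{n,s_n}\le\x\mid\Dn)$; each is a measurable function of $\Dn$ taking values in $[0,1]$, and by the law of total probability its expectation is the matching unconditional probability, i.e. $\Exp(g_n)=\Pr(\|\Delta_{n,s_n}\|>\delta)$, $\Exp(h_n)=\Pr(\|\Delta_{n,s_n}\|>K)$, and $\Exp\{F_n(\x)\}=\Pr(\Delta_{n,s_n}\le\x)$. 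Writing $F(\x)=\Pr(U\le\x)$, the three ``if\,\ldots\,then'' (conditional $\Rightarrow$ unconditional) claims become statements that pass from these bounded variables to their means, and each ``vice versa'' is the reverse passage.

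For the forward half of part~\ref{item:1}, the hypothesis $\opD(1)$ is exactly $g_n=\op$; since $0\le g_n\le1$, bounded convergence promotes convergence in probability to $L^1$ convergence, giving $\Pr(\|\Delta_{n,s_n}\|>\delta)=\Exp(g_n)\to0$, which is $\op$. Part~\ref{item:3} is the same argument with $F_n(\x)$ in place of $g_n$: $\cvdD U$ says $F_n(\x)-F(\x)=\op$ for each $\x$, so bounded convergence yields $\Pr(\Delta_{n,s_n}\le\x)\to F(\x)$ at every $\x$, which is $\cvd U$ because continuity of $U$ makes every point a continuity point of $F$. Part~\ref{item:2} replaces bounded convergence by a truncation: given $\epsilon$, take the $K_\epsilon$ provided by $\OpD(1)$ and split $\Exp(h_n)$ according to whether $h_n>\epsilon$, bounding it by $\Pr(h_n>\epsilon)+\epsilon$; the first term tends to $0$ by hypothesis, so $\limsup_n\Pr(\|\Delta_{n,s_n}\|>K_\epsilon)\le\epsilon$, which is $\Op$. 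The converse of part~\ref{item:1} is then immediate from Markov's inequality: if $\Exp(g_n)\to0$ then $\Pr(g_n>\epsilon)\le\Exp(g_n)/\epsilon\to0$ for every $\epsilon$, i.e. $\opD(1)$.

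The steps I expect to be the main obstacle are the converses of parts~\ref{item:2} and \ref{item:3}, since there one must recover a statement about the conditional probabilities $h_n$ or $F_n(\x)$ from control of their means alone, and convergence of the mean of a bounded variable does not by itself force convergence (or tightness) in probability. Markov gives only $\Pr(h_n>\epsilon)\le\Pr(\|\Delta_{n,s_n}\|>K)/\epsilon$, and $\Op$ merely bounds the right-hand side by $\eta/\epsilon$ for the truncation level $K=K_\eta$ attached to a tolerance $\eta$, so no single $K$ is guaranteed to drive $\Pr(h_n>\epsilon)$ to $0$; an analogous gap arises for $F_n(\x)$. I would attempt to close these directions with a diagonal choice of truncation levels along $\eta_j\downarrow0$ in part~\ref{item:2}, and with the monotonicity of the random distribution function $F_n(\cdot)$ together with a Polya-type uniformization in part~\ref{item:3}; but I anticipate that making such a conditional statement hold \emph{uniformly in $n$} is the crux, and that it may ultimately rely on the randomness of $\Delta_{n,s_n}$ coming genuinely from the subsampling rather than from $\Dn$ alone.
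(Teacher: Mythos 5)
Your part~(a) and the forward halves of (b) and (c) are correct and essentially identical to the paper's argument: the paper also reduces everything to the tower identity $\Exp\{\Pr(\cdot\,|\Dn)\}=\Pr(\cdot)$, passes between convergence in probability and convergence in mean for the bounded conditional probabilities (citing Theorem 1.3.6 of Serfling where you invoke bounded convergence and Markov), and proves the forward half of (b) by exactly your truncation of $\Exp(h_n)$ at level $\epsilon$, where $h_n(K)=\Pr(\|\Delta_{n,s_n}\|>K\,|\Dn)$. The genuine gap is that you leave the converses of (b) and (c) unproven. For (b), the fix is not a diagonalization: the paper operationalizes the definition of $\OpD(1)$ so that the truncation level may depend on \emph{both} tolerances. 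Given $\epsilon,\delta>0$, unconditional $\Op$ supplies $K_{\delta\epsilon}$ with $\Pr(\|\Delta_{n,s_n}\|>K_{\delta\epsilon})\le\delta\epsilon$ eventually, and a single application of Markov's inequality yields $\Pr\{h_n(K_{\delta\epsilon})>\epsilon\}\le\epsilon^{-1}\Pr(\|\Delta_{n,s_n}\|>K_{\delta\epsilon})<\delta$, which is all that is needed. Your observation that no single $K_\epsilon$ is forced to satisfy $\Pr\{h_n(K_\epsilon)>\epsilon\}\to0$ is correct, and under that stronger reading the converse actually fails (take $\Delta_{n,s_n}=X$ a fixed unbounded $\Dn$-measurable variable: $h_n(K)=I(X>K)$, so $\Pr\{h_n(K)>\epsilon\}=\Pr(X>K)>0$ for every fixed $K$, although $X=\Op$ trivially); hence your planned diagonal choice along $\eta_j\downarrow0$ cannot produce the fixed $K_\epsilon$ you were aiming for, and the quantifier relaxation \emph{is} the substance of the paper's converse.

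For (c), your suspicion is well founded, but the paper does not contain the machinery you hoped to find: it settles both directions in two lines by asserting that, since $\Pr(\Delta_{n,s_n}\le\x\,|\Dn)$ is bounded, $F_n(\x)-F(\x)=\op$ holds if and only if $\Exp\{F_n(\x)\}-F(\x)\to0$. Only the forward half of that equivalence follows from boundedness; the reverse passage, from convergence of the mean of a bounded \emph{signed} variable to convergence in probability, is exactly the step you flagged, and it genuinely fails without further structure: if $\Delta_{n,s_n}=X_n$ is $\Dn$-measurable with $X_n$ i.i.d.\ uniform on $(0,1)$, then $F_n(\x)=I(X_n\le\x)$ satisfies $\Exp\{F_n(\x)\}=F(\x)$ exactly while $F_n(\x)-F(\x)$ is not $\op$ at interior points. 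So your proposal is incomplete on the converse of (c), but at that same spot the paper's proof is an assertion rather than an argument — no monotonicity or Polya-type device appears — and closing this direction rigorously would indeed require using that, conditionally on $\Dn$, the randomness of $\Delta_{n,s_n}$ comes genuinely from the subsampling, precisely the caveat with which you ended.
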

\black

The following Theorems~\ref{thm:1} and \ref{thm:2} present conditional asymptotic distributions of $\ttheta_{s_n,R}$ in (\ref{eq:27}) and $\ttheta_{s_n,P}$ in (\ref{eq:28}), respectively, when approximating the full data estimator $\htheta_n$.

\begin{theorem}\label{thm:1}
Under Assumptions~\ref{ass:1}-\ref{ass:6}, as $s_n\rightarrow\infty$ and $n\rightarrow\infty$, the estimator $\ttheta_{s_n,R}$ in (\ref{eq:27}) 
satisfies that 
\begin{equation}\label{eq:tthetaAsim}
  \sqrt{s_n}\{V_{n,R}(\htheta_n)\}^{-1/2}(\ttheta_{s_n,R}-\htheta_n)
  \ \cvdD\  \Nor(\0,\I_d),
\end{equation}
where $\Nor(\0,\I_d)$ is a multivariate Gaussian distribution with mean $\0$ and variance $\I_d$ (the identity matrix of dimension $d$), $V_{n,R}(\btheta)=\ddM_n^{-1}(\btheta)\Lambda_{n,R}(\btheta)\ddM_n^{-1}(\btheta)$,
\begin{equation}\label{eq:1}
  \ddM_n(\btheta)=\onen\sumn\ddm(Z_i,\btheta),
  \quad\text{and}\quad
\Lambda_{n,R}(\btheta)=\frac{1}{n^2}\sumn
      \frac{\dm(Z_i,\btheta)\dm\tp(Z_i,\btheta)}{\pi_{n,i}}.
\end{equation}
\end{theorem}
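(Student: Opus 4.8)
The plan is to treat $\ttheta_{s_n,R}$ as an M-estimator conditional on the full data $\Dn$, and to run the usual consistency-then-normality argument while keeping every limit in the conditional-in-probability senses $\opD$, $\OpD$, $\cvdD$ defined above. Write the subsample criterion as $\M_{s_n,R}^*(\btheta)=s_n^{-1}\sum_{i=1}^{s_n}\m(Z_i^*,\btheta)/(n\pi_{n,i}^*)$, the object maximized in (\ref{eq:27}). The elementary fact that drives everything is that, conditional on $\Dn$, the resampled pairs $(Z_i^*,\pi_{n,i}^*)$ are i.i.d., so for each fixed $\btheta$, $\Exp\{\m(Z_i^*,\btheta)/(n\pi_{n,i}^*)|\Dn\}=\sum_{j=1}^n\pi_{n,j}\m(Z_j,\btheta)/(n\pi_{n,j})=\M_n(\btheta)$. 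Hence $\M_{s_n,R}^*$ is a conditionally unbiased estimator of the full-data criterion $\M_n$, whose maximizer is $\htheta_n$, and the same resampling computation applied to $\dm$ and $\ddm$ will identify the limiting variance and Hessian.

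First I would establish consistency. Since $\m$ is concave in $\btheta$ (Assumption~\ref{ass:2}), $\M_{s_n,R}^*$ is concave, and the identity above together with a conditional weak law gives $\M_{s_n,R}^*(\btheta)-\M_n(\btheta)=\opD(1)$ pointwise. For concave functions pointwise convergence upgrades to uniform convergence on the compact parameter set (Assumption~\ref{ass:1}), so the argmax theorem yields $\ttheta_{s_n,R}-\btheta_0=\opD(1)$, which combined with the convergence $\htheta_n\to\btheta_0$ in probability (Proposition~\ref{prop1}) gives $\ttheta_{s_n,R}-\htheta_n=\opD(1)$. Next I would linearize: the first-order condition $\dot{\M}_{s_n,R}^*(\ttheta_{s_n,R})=\0$ and a Taylor expansion about $\htheta_n$ give $\0=\dot{\M}_{s_n,R}^*(\htheta_n)+\ddot{\M}_{s_n,R}^*(\bar{\btheta})(\ttheta_{s_n,R}-\htheta_n)$ for an intermediate $\bar{\btheta}$, so that $\sqrt{s_n}(\ttheta_{s_n,R}-\htheta_n)=-\{\ddot{\M}_{s_n,R}^*(\bar{\btheta})\}^{-1}\sqrt{s_n}\,\dot{\M}_{s_n,R}^*(\htheta_n)$ once the Hessian is shown to be invertible.

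The score $\sqrt{s_n}\,\dot{\M}_{s_n,R}^*(\htheta_n)=s_n^{-1/2}\sum_{i=1}^{s_n}\dm(Z_i^*,\htheta_n)/(n\pi_{n,i}^*)$ is a normalized sum of conditionally i.i.d.\ terms whose conditional mean is $\dot{\M}_n(\htheta_n)=\0$ (because $\htheta_n$ maximizes $\M_n$) and whose conditional covariance is exactly $\Lambda_{n,R}(\htheta_n)$ by the resampling computation above. A conditional Lindeberg--Feller CLT then gives $\sqrt{s_n}\,\dot{\M}_{s_n,R}^*(\htheta_n)\cvdD\Nor\{\0,\Lambda_{n,R}(\htheta_n)\}$, where the Lindeberg condition is verified in outer probability using $\onen\sumn\|\dm(Z_i,\htheta_n)\|^4=\Op$ (Assumption~\ref{ass:4}) together with Assumption~\ref{ass:6}, which prevents the inflated summands $\dm/(n\pi_{n,i}^*)$ from being dominated by tiny probabilities. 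For the Hessian, the same conditional law of large numbers gives $\ddot{\M}_{s_n,R}^*(\htheta_n)-\ddM_n(\htheta_n)=\opD(1)$, the Lipschitz bound on $\ddm$ (Assumption~\ref{ass:3}) and the consistency of $\bar{\btheta}$ let me replace $\bar{\btheta}$ by $\htheta_n$, and positive-definiteness of $-\ddM_n$ (Assumption~\ref{ass:3}) makes the inverse well defined. A conditional Slutsky argument then yields $\sqrt{s_n}(\ttheta_{s_n,R}-\htheta_n)\cvdD\Nor\{\0,V_{n,R}(\htheta_n)\}$ with $V_{n,R}=\ddM_n^{-1}\Lambda_{n,R}\ddM_n^{-1}$, and left-multiplying by $\{V_{n,R}(\htheta_n)\}^{-1/2}$ produces the standardized statement (\ref{eq:tthetaAsim}).

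The hard part will be making the conditional CLT rigorous, since both the normalizing covariance $\Lambda_{n,R}(\htheta_n)$ and the triangular-array summands themselves depend on the random data $\Dn$. One cannot simply invoke an ordinary i.i.d.\ CLT: the Lindeberg condition must be shown to hold not almost surely but in outer probability (the truncated second moments must be $\opD(1)$), and one must argue that $\Lambda_{n,R}(\htheta_n)$ stays bounded and bounded away from singularity with probability tending to one. This is precisely where Assumptions~\ref{ass:4} and \ref{ass:6} are needed, and where the conditional modes of convergence defined before Proposition~\ref{prop2} have to be manipulated carefully through their equivalent $\op$/$\Op$ formulations in Proposition~\ref{prop2}.
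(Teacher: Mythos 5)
Your proposal is correct and takes essentially the same route as the paper's proof: conditional unbiasedness of the weighted subsample criterion plus concavity (the paper also invokes Theorem 5.9 of van der Vaart and its remark) for conditional consistency, then a conditional Lindeberg--Feller CLT for the score whose conditional covariance is exactly $\Lambda_{n,R}(\htheta_n)$, with the Lindeberg condition verified in probability via a $2+\delta$ moment bound that at $\delta=2$ is precisely your fourth-moment argument from Assumptions~\ref{ass:4} and~\ref{ass:6} — the paper's Lemmas~\ref{lem:BrtoddM} and~\ref{lem2:DistributionofdM} correspond one-to-one to your Hessian and score steps, finished by Taylor expansion and conditional Slutsky. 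The only cosmetic difference is that the paper writes the Taylor remainder with the integral-form Hessian $B_{s_n}=\int_0^1\oner\sumrr\ddm\{Z_i^*,\htheta_n+\lambda(\ttheta_{s_n,R}-\htheta_n)\}/(n\pi_{n,i}^*)\,\ud\lambda$ rather than a single mean-value point $\bar{\btheta}$ (the mean value theorem does not hold as stated for the vector-valued score, so strictly one needs componentwise intermediate points or the integral form), but your Lipschitz control from Assumption~\ref{ass:3}, which you already use to replace $\bar{\btheta}$ by $\htheta_n$, repairs this immediately.
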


\begin{theorem}\label{thm:2}
Under Assumptions~\ref{ass:1}-\ref{ass:6}, as $s_n\rightarrow\infty$ and $n\rightarrow\infty$, the estimator $\ttheta_{s_n,P}$ in (\ref{eq:28}) satisfies that, 
\begin{equation}\label{eq:thm5}
  \sqrt{s_n}\{V_{n,P}(\htheta_n)\}^{-1/2}(\ttheta_{s_n,P}-\htheta_n)
  \ \cvdD\  \Nor\left(\0,\I\right),
\end{equation}
where $V_{n,P}(\btheta)=\ddM_n^{-1}(\btheta)\Lambda_{n,P}(\btheta)\ddM_n^{-1}(\btheta)$,  $\ddM_n(\btheta)$ is the same as in \eqref{eq:1}, and
\begin{align}
  \Lambda_{n,P}(\btheta)
  =\Lambda_{n,R}(\btheta)-\frac{s_n}{n^2}\sumn
    \dm(Z_i,\btheta)\dm\tp(Z_i,\btheta).\label{eq:2}
\end{align}
\end{theorem}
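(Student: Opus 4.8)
The plan is to mirror the M-estimation argument behind Theorem~\ref{thm:1}, while carefully tracking the single point where the without-replacement structure alters the variance. First I would rewrite the Poisson estimator through the inclusion indicators $\delta_i=\mathbbm{1}(u_i\le s_n\pi_{n,i})$, which, conditionally on $\Dn$, are independent Bernoulli variables with $\Exp(\delta_i\mid\Dn)=s_n\pi_{n,i}$ and $\mathrm{Var}(\delta_i\mid\Dn)=s_n\pi_{n,i}(1-s_n\pi_{n,i})$; the constraint $\pi_{n,i}\le s_n^{-1}$ keeps these valid probabilities. Since the random size $s_n^*=\sumn\delta_i$ enters (\ref{eq:28}) only through the common positive factor $ns_n^*$, it cancels from the $\arg\max$, so $\ttheta_{s_n,P}$ is exactly the root of $\sumn\delta_i\pi_{n,i}^{-1}\dm(Z_i,\btheta)=\0$. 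Writing the normalized score $G_n(\btheta)=(ns_n)^{-1}\sumn\delta_i\pi_{n,i}^{-1}\dm(Z_i,\btheta)$, the estimator solves $G_n(\ttheta_{s_n,P})=\0$, and $\Exp\{G_n(\btheta)\mid\Dn\}=\onen\sumn\dm(Z_i,\btheta)$ is the full-data gradient, which vanishes at $\htheta_n$.

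Next I would establish consistency $\ttheta_{s_n,P}-\htheta_n=\opD(1)$: concavity (Assumption~\ref{ass:2}) together with a conditional uniform law of large numbers over the compact set of Assumption~\ref{ass:1} shows the subsample objective converges uniformly in conditional probability to the full-data objective, whose unique maximizer is $\htheta_n$, and concavity localizes the root. With consistency in hand I would Taylor expand $\0=G_n(\ttheta_{s_n,P})=G_n(\htheta_n)+\dot G_n(\bar\btheta)(\ttheta_{s_n,P}-\htheta_n)$ for an intermediate $\bar\btheta$, where $\dot G_n(\btheta)=(ns_n)^{-1}\sumn\delta_i\pi_{n,i}^{-1}\ddm(Z_i,\btheta)$ has conditional mean $\ddM_n(\btheta)$. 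The Lipschitz and second-moment control on $\ddm$ in Assumption~\ref{ass:3} then yields $\dot G_n(\bar\btheta)=\ddM_n(\htheta_n)+\opD(1)$, which is invertible by Assumption~\ref{ass:3}, giving the linearization $\sqrt{s_n}(\ttheta_{s_n,P}-\htheta_n)=-\ddM_n^{-1}(\htheta_n)\sqrt{s_n}G_n(\htheta_n)+\opD(1)$.

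The central step is a conditional central limit theorem for $\sqrt{s_n}G_n(\htheta_n)=\sumn\xi_{n,i}$ with $\xi_{n,i}=(\sqrt{s_n}\,n\pi_{n,i})^{-1}\delta_i\dm(Z_i,\htheta_n)$. Because $\onen\sumn\dm(Z_i,\htheta_n)=\0$ is the full-data normal equation, these independent summands are already centered given $\Dn$, and evaluating $\sumn\mathrm{Var}(\xi_{n,i}\mid\Dn)$ with $\mathrm{Var}(\delta_i\mid\Dn)=s_n\pi_{n,i}(1-s_n\pi_{n,i})$ produces exactly $\Lambda_{n,P}(\htheta_n)$ of (\ref{eq:2}): the factor $(1-s_n\pi_{n,i})$ is precisely what subtracts the finite-population correction $s_nn^{-2}\sumn\dm(Z_i,\htheta_n)\dm\tp(Z_i,\htheta_n)$ from $\Lambda_{n,R}$. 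I would then invoke the Lindeberg--Feller theorem for triangular arrays of independent, non-identically distributed vectors, verifying a Lyapunov condition in conditional probability: since $\delta_i\in\{0,1\}$ and $\max_i(n\pi_{n,i})^{-1}=\Op$ by Assumption~\ref{ass:6}, $\sumn\Exp\{\|\xi_{n,i}-\Exp(\xi_{n,i}\mid\Dn)\|^4\mid\Dn\}$ is bounded by a constant multiple of $s_n^{-1}\onen\sumn\|\dm(Z_i,\htheta_n)\|^4$, which is $\Op$ by Assumption~\ref{ass:4} and hence vanishes as $s_n\to\infty$. A Cram\'er--Wold device then gives $\sqrt{s_n}G_n(\htheta_n)\cvdD\Nor(\0,\Lambda_{n,P}(\htheta_n))$, and Slutsky applied to the linearization followed by left-multiplication by $\{V_{n,P}(\htheta_n)\}^{-1/2}$ yields (\ref{eq:thm5}).

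The main obstacle I anticipate is the conditional CLT, not the algebra: the Lyapunov/Lindeberg condition must be verified \emph{in probability over $\Dn$} rather than for a fixed data sequence, and the summands are independent but not identically distributed, so the triangular-array machinery and the joint use of Assumptions~\ref{ass:4} and~\ref{ass:6} carry the real weight. The accompanying subtlety, and the one genuine departure from Theorem~\ref{thm:1}, is retaining the $(1-s_n\pi_{n,i})$ factor throughout the variance computation so that the correction term in (\ref{eq:2}) appears correctly; tracking it also clarifies why the argument tolerates $s_n$ comparable to $n$ and does not require $s_n=o(n)$, since the correction remains well-defined in that regime.
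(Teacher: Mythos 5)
Your proposal is correct, and its core coincides with the paper's proof: the same indicator representation (the paper's $\nu_i$, your $\delta_i$), the observation that the random factor $s_n^*$ cancels from the $\arg\max$, the conditional Lindeberg--Feller CLT in which the Bernoulli variance factor $1-s_n\pi_{n,i}$ produces exactly the correction term in \eqref{eq:2} (the paper's Lemma~\ref{lem:dM_PAsim}), the Hessian stability result (the paper's Lemma~\ref{lem:ddMr}), and the Slutsky finish. The one methodological divergence is in how you pass from the CLT for the score to the linearization of the estimator: the paper does not prove consistency separately for $\ttheta_{s_n,P}$, but instead applies the Hjort--Pollard Basic Corollary to the rescaled concave process $\gamma_P(\bu)=s_nM_P^*(\htheta_n+\bu/\sqrt{s_n})-s_nM_P^*(\htheta_n)$, which delivers $\sqrt{s_n}(\ttheta_{s_n,P}-\htheta_n)=\ddM_n^{-1}(\htheta_n)\sqrt{s_n}\dM_P^*(\htheta_n)+\op$ in one step; you instead prove consistency first (the route the paper takes for Theorem~\ref{thm:1}) and then Taylor-expand the score. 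Your route buys a more elementary, self-contained argument; the paper's buys brevity and avoids the intermediate-point issue entirely. Two small repairs to your version: the mean-value expansion $\0=G_n(\htheta_n)+\dot G_n(\bar\btheta)(\ttheta_{s_n,P}-\htheta_n)$ does not hold with a single intermediate point for a vector-valued score, so you should use the integral remainder $\int_0^1\dot G_n\{\htheta_n+\lambda(\ttheta_{s_n,P}-\htheta_n)\}\,\ud\lambda$ as the paper does with $B_{s_n}$ in the proof of Theorem~\ref{thm:1} (your Hessian lemma, stated for arbitrary $\bu_{s_n}=\op$, covers this form); and the summands $\xi_{n,i}$ are not individually centered given $\Dn$, since $\Exp(\xi_{n,i}|\Dn)=n^{-1}\sqrt{s_n}\,\dm(Z_i,\htheta_n)\neq\0$ in general --- only their sum vanishes by the full-data normal equation --- though your displayed Lyapunov condition correctly centers each term, so this is an imprecision of phrasing rather than a gap.
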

\begin{remark}\normalfont
  The asymptotic distributions in \eqref{eq:tthetaAsim} and \eqref{eq:thm5} mean that given a full data set for any $\delta>0$, the probability that $\|\ttheta_{s_n,R}-\htheta_n\|>\delta$ is accurately approximated by $\Pr(\|U_R\|>\delta)$ where $U_R\sim \Nor\{\0,V_{n,R}(\htheta_n)\}$, and the probability that $\|\ttheta_{s_n,P}-\htheta_n\|>\delta$ is accurately approximated by $\Pr(\|U_P\|>\delta)$ where $U_P\sim \Nor\{\0,V_{n,P}(\htheta_n)\}$.
Thus, a smaller variance means a smaller probability of excess error at the same error bound, or a smaller error bound for the same excess probability.  %
\end{remark}
\begin{remark}\normalfont
 Both $\ttheta_{s_n,R}$ and $\ttheta_{s_n,P}$ have Gaussian asymptotic distributions, but they have different asymptotic variances $V_{n,R}(\htheta_n)$ and $V_{n,P}(\htheta_n)$, respectively. %
  \black  
  Under Assumption~\ref{ass:4}, the second term on the right-hand-side of \eqref{eq:2} goes to zero in probability if $s_n/n\rightarrow0$, and it converges to a positive-definite matrix in probability if $s_n/n\rightarrow c>0$. Thus,
  the difference $V_{n,R}(\htheta_n)-V_{n,P}(\htheta_n)\rightarrow\bm0$ in probability if $s_n/n\rightarrow0$, and it converges to a positive-definite matrix in probability if $s_n/n$ converges to a positive constant. This means that subsampling with replacement and Poisson subsampling have the same asymptotic estimation efficiency only if the subsampling ratio $s_n/n$ goes to zero; otherwise, Poisson subsampling has a higher estimation efficiency. Thus, to obtain more accurate estimates in practice, Poisson subsampling is recommended unless the subsampling ratio $s_n/n$ is very small.
\end{remark}

\begin{remark}\normalfont
  If the sampling distribution $\bpi$ is constructed so that $\Lambda_{n,R}(\btheta)\rightarrow\Lambda(\btheta)$ in probability uniformly in a neighborhood of $\btheta_0$, then $V_{n,R}(\htheta_n)$ and $(1-c)^{-1}V_{n,P}(\htheta_n)$ both converge in probability to $V(\btheta_0)$, the scaled asymptotic variance of ${\htheta}$. This means both subsample estimators have the bootstrap consistency in this scenario. A class of sampling distributions satisfies this situation if $\bpi$  does not dependent on the data, such as the class of exchangeable
  bootstrap weights which includes the uniform sampling distribution \citep[see][]{praestgaard1993exchangeably,cheng2010bootstrap}. However if $\bpi$ depends on the data, then $\Lambda_{n,R}(\btheta)$ may not converge to $\Lambda(\btheta)$ \footnote{This is still possible in some special cases such as the local case control subsampling for logistic regression \citep{fithian2014local,wang2019more}.}, and in this case the subsample estimators do not have the bootstrap consistency. The goal of this paper is different from the line of research about bootstrap that focuses on constructing conference region nor approximating complicated distributions \citep[see][]{bickel:97,Poli:Roma:Wolf:subs:1999}, so bootstrap inconsistency is not a concern. Nevertheless, if multiple subsamples are taken, then the average of the subsample estimates is recommended and the variance can be estimated from these subsample estimates using the approach proposed in \cite{wang2021optimal}.
\end{remark}

Although the convergence in conditional distribution $\cvdD$ can be replaced by $\cvd$ because of Proposition~\ref{prop2} \ref{item:3}, Theorems~\ref{thm:1} and \ref{thm:2} are about approximating the full data estimator and they are conditional results in nature. In the following, we derive the unconditional asymptotic distribution when the true parameter is of interest to further compare the two subsampling approaches.

\begin{theoremp}{1'}\label{thm:1p}
  Under Assumptions~\ref{ass:1}-\ref{ass:6}, if $\Lambda_{n,R}(\btheta_{0})$ converges to a positive-definite matrix $\Lambda_{\pi}(\btheta_{0})$ as $s_n\rightarrow\infty$ and $n\rightarrow\infty$, %
  then the estimator $\ttheta_{s_n,R}$ in (\ref{eq:27}) satisfies that 
  \begin{equation*}
    \sqrt{s_n}(\ttheta_{s_n,R}-\btheta_{0})
    \cvd\Nor\big\{\0,\;V_{R}^{U}(\btheta_{0})\big\},
  \end{equation*}
   where $V_{R}^{U}(\btheta)=\ddM^{-1}(\btheta)\Lambda_{R}^{U}(\btheta)\ddM^{-1}(\btheta)$, $\Lambda_{R}^{U}(\btheta)=\Lambda_{\pi}(\btheta)+c\Lambda(\btheta)$, and $c=\lim\frac{s_n}{n}$.

\end{theoremp}

\begin{theoremp}{2'}\label{thm:2p}
Under Assumptions~\ref{ass:1}-\ref{ass:6}, if $\Lambda_{n,R}(\btheta_{0})$ converges to a positive-definite matrix $\Lambda_{\pi}(\btheta_{0})$ as $s_n\rightarrow\infty$ and $n\rightarrow\infty$, then the estimator $\ttheta_{s_n,P}$ in \eqref{eq:28} satisfies that
\begin{equation*}
  \sqrt{s_n}(\ttheta_{s_n,P}-\btheta_{0})
  \cvd\Nor\big\{\0,\;V_{P}^{U}(\btheta_{0})\big\},
\end{equation*}
where $V_{P}^{U}(\btheta)=\ddM^{-1}(\btheta)\Lambda_{\pi}(\btheta)\ddM^{-1}(\btheta)$.

\end{theoremp}
\begin{remark}\normalfont
In Theorems~\ref{thm:1p} and~\ref{thm:2p}, the unconditional asymptotic distributions of $\ttheta_{s_n,R}$ and $\ttheta_{s_n,P}$ for estimating the true parameter are also Gaussian with (scaled) variances $V_{R}^{U}(\btheta_0)$ and $V_{P}^{U}(\btheta_0)$, respectively.
From the two theorems, we see that $V_{R}^{U}(\btheta_0)=V_{P}^{U}(\btheta_0)+cV(\btheta_0)$, where $V(\btheta_0)$ is the scaled asymptotic variance for the full data estimator in Propositio~\ref{prop1}. Here, $V_{P}^{U}(\btheta_0)$ can be interpreted as the variation due to subsampling and $cV(\btheta_0)$ can be interpreted as the variation due to the randomness of the full data. It is interesting to note that the asymptotic variance components due to the two sources are additive for the subsampling with replacement estimator $\ttheta_{s_n,R}$, while $cV(\btheta_0)$ does not contribute to the asymptotic variance of the Poisson subsampling estimator $\ttheta_{s_n,P}$.
\end{remark}
\black
\section{Optimal subsampling probabilities}
\label{sec:optim-subs-prob}
From the results in Theorems~\ref{thm:1} and \ref{thm:2}, the asymptotic variances $V_{n,R}(\htheta_n)$ and $V_{n,P}(\htheta_n)$ depend on $\bpi=\{\pi_{n,i}\}_{i=1}^n$. To improve the estimation efficiency, we want to choose optimal $\bpi$ to minimize $V_{n,R}(\htheta_n)$ or $V_{n,P}(\htheta_n)$. Specifically, we consider the L-optimality criterion \citep[Section 10.5 of][]{atkinson2007optimum}. 
The L-optimality minimizes the trace of the asymptotic variance matrix
for some {\bf l}inear transformation, say $L$, of the parameter estimator. For our case, this is to minimize $\tr\{LV_{n,R}(\htheta_n)L\tp\}$ or $\tr\{LV_{n,P}(\htheta_n)L\tp\}$ for some matrix $L$, because $LV_{n,R}(\htheta_n)L\tp$ and $LV_{n,P}(\htheta_n)L\tp$ are the asymptotic variances of $L\ttheta_{s_n,R}$ and $L\ttheta_{s_n,R}$, respectively.
If we take $L=\I$, then the resulting criterion is also called the A-optimality; this is to minimize the {\bf a}verage of the variances for all parameter components by minimizing the trace of the variance matrix, i.e., minimizing $\tr\{V_{n,R}(\htheta_n)\}$ or $\tr\{V_{n,P}(\htheta_n)\}$. 
If we take $L=\ddM_n(\htheta_n)$, then the resultant criterion is to minimize $\tr\{\Lambda_{n,R}(\htheta_n)\}$ or $\tr\{\Lambda_{n,P}(\htheta_n)\}$. This has a computational advantage compared with other choices, so we focus more on this choice in this paper. %
The following Theorems~\ref{thm:3} and \ref{thm:4} present the optimal subsampling probabilities for subsampling with replacement and Poisson subsampling, respectively. %

\begin{theorem}\label{thm:3}
  For the subsampling with replacement estimator in (\ref{eq:27}), %
  the L-optimal subsampling probabilities %
  with $L=\ddM_n(\htheta_n)$ that minimize $\tr\{\Lambda_{n,R}(\htheta_n)\}$ are
\begin{equation}\label{eq:SSPmMSE}
  \pi_{n,Ri}^{\mvc}
  =\frac{\|\dm(Z_i,\htheta_n)\|}{\sumjn\|\dm(Z_j,\htheta_n)\|},
  \quad i=1, ..., n.
   \end{equation}
\end{theorem}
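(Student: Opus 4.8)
The plan is to reduce the L-optimality criterion to an explicit scalar minimization over the probability simplex and then invoke the Cauchy--Schwarz inequality. First I would simplify the objective. With $L=\ddM_n(\htheta_n)$ and using that the Hessian $\ddM_n(\htheta_n)$ is symmetric, so that both it and its inverse equal their own transposes, I would compute
\[
  L\,V_{n,R}(\htheta_n)\,L\tp
  =\ddM_n(\htheta_n)\ddM_n^{-1}(\htheta_n)\Lambda_{n,R}(\htheta_n)
    \ddM_n^{-1}(\htheta_n)\ddM_n(\htheta_n)
  =\Lambda_{n,R}(\htheta_n).
\]
Hence the L-optimality criterion is exactly $\tr\{\Lambda_{n,R}(\htheta_n)\}$, confirming the reduction used in the statement.

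Next I would write the trace explicitly. Using $\tr\{\dm(Z_i,\htheta_n)\dm\tp(Z_i,\htheta_n)\}=\|\dm(Z_i,\htheta_n)\|^2$ together with the definition of $\Lambda_{n,R}$ in \eqref{eq:1}, the objective becomes
\[
  \tr\{\Lambda_{n,R}(\htheta_n)\}
  =\frac{1}{n^2}\sumn\frac{\|\dm(Z_i,\htheta_n)\|^2}{\pi_{n,i}}.
\]
Writing $a_i=\|\dm(Z_i,\htheta_n)\|$, the problem is to minimize $\sumn a_i^2/\pi_{n,i}$ over the simplex $\{\pi_{n,i}>0,\ \sumn\pi_{n,i}=1\}$. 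Note that for subsampling with replacement the cap $\pi_{n,i}\le s_n^{-1}$ imposed in the Poisson case is absent, so only the simplex constraint is active.

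Then I would apply Cauchy--Schwarz in the form
\[
  \Big(\sumn a_i\Big)^2
  =\Big(\sumn \frac{a_i}{\sqrt{\pi_{n,i}}}\,\sqrt{\pi_{n,i}}\Big)^2
  \le \Big(\sumn\frac{a_i^2}{\pi_{n,i}}\Big)\Big(\sumn\pi_{n,i}\Big)
  =\sumn\frac{a_i^2}{\pi_{n,i}},
\]
where the last equality uses $\sumn\pi_{n,i}=1$. This gives the $\bpi$-free lower bound $(\sumn a_i)^2$, and equality in Cauchy--Schwarz holds precisely when $\sqrt{\pi_{n,i}}\propto a_i/\sqrt{\pi_{n,i}}$, i.e.\ $\pi_{n,i}\propto a_i=\|\dm(Z_i,\htheta_n)\|$. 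Combining with the normalization $\sumn\pi_{n,i}=1$ yields exactly the probabilities in \eqref{eq:SSPmMSE}. (Equivalently, a Lagrange-multiplier argument minimizing $\sumn a_i^2/\pi_{n,i}+\lambda(\sumn\pi_{n,i}-1)$ gives the same $\pi_{n,i}\propto a_i$.)

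There is no serious obstacle here: once the objective has been reduced to scalar form the argument is a textbook constrained optimization, and the solution automatically satisfies $\pi_{n,i}>0$ as long as the gradients $\dm(Z_i,\htheta_n)$ are not all zero, which is guaranteed by the positive-definiteness of $\Lambda(\btheta)$ in Assumption~\ref{ass:4}. The only points meriting care are verifying the symmetry of $\ddM_n(\htheta_n)$ so that $L\tp=L$ in the first reduction, and recording the equality case of Cauchy--Schwarz to conclude that the lower bound is attained uniquely at the stated probabilities.
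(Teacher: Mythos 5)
Your proposal is correct and follows essentially the same route as the paper: both reduce $\tr\{L V_{n,R}(\htheta_n)L\tp\}$ to $\frac{1}{n^2}\sumn\|\dm(Z_i,\htheta_n)\|^2/\pi_{n,i}$ and apply the Cauchy--Schwarz inequality using $\sumn\pi_{n,i}=1$, with equality precisely when $\pi_{n,i}\propto\|\dm(Z_i,\htheta_n)\|$. Your additional remarks (symmetry of $\ddM_n(\htheta_n)$, absence of the cap $\pi_{n,i}\le s_n^{-1}$, positivity of the optimizer) are sound and merely make explicit what the paper leaves implicit.
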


\begin{theorem}\label{thm:4}
  For the Poisson subsampling estimator in (\ref{eq:28}),
  the L-optimal subsampling probabilities %
 with $L=\ddM_n(\htheta_n)$ that minimize $\tr\{\Lambda_{n,P}(\htheta_n)\}$ are
\begin{align}\label{eq:11}
  \pi_{n,Pi}^{\mvc}
  =\frac{\|\dm(Z_i,\htheta_n)\|\wedge H}
  {\sumjn\{\|\dm(Z_j,\htheta_n)\|\wedge H\}},
  \quad i=1, ..., n,
\end{align}
where $a\wedge b=\min(a,b)$,
\begin{align}
H=\frac{\sum_{i=1}^{n-g}\|\dm(Z,\htheta_n)\|_{(i)}}{s_n-g},\label{eq:7}
\end{align}
$\|\dm(Z,\htheta_n)\|_{(1)}\le ... \le \|\dm(Z,\htheta_n)\|_{(n)}$ are the order statistics of $\|\dm(Z_1,\htheta_n)\|, ..., \|\dm(Z_n,\htheta_n)\|$, and $g$ is an integer such that
\begin{align}
  \frac{\|\dm(Z,\htheta_n)\|_{(n-g)}}
  {\sum_{i=1}^{n-g}\|\dm(Z,\htheta_n)\|_{(i)}}
  &<\frac{1}{s_n-g}
  \quad\text{and}\quad  
  \frac{\|\dm(Z,\htheta_n)\|_{(n-g+1)}}
  {\sum_{i=1}^{n-g+1}\|\dm(Z,\htheta_n)\|_{(i)}}
  \geq\frac{1}{s_n-g+1}\label{eq:9},
\end{align}
in which we define $\|\dm(Z,\htheta_n)\|_{(n+1)}=\infty$.
\end{theorem}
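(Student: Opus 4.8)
The plan is to first strip $\tr\{\Lambda_{n,P}(\htheta_n)\}$ down to its only $\bpi$-dependent part. By \eqref{eq:2}, the second term $\frac{s_n}{n^2}\sumn\|\dm(Z_i,\htheta_n)\|^2$ is free of $\bpi$, so minimizing $\tr\{\Lambda_{n,P}(\htheta_n)\}$ is equivalent to minimizing $\tr\{\Lambda_{n,R}(\htheta_n)\}$ from \eqref{eq:1}, i.e.\ to minimizing $\sumn d_i^2/\pi_{n,i}$ where I write $d_i=\|\dm(Z_i,\htheta_n)\|$. The task therefore reduces to the convex program
\[
  \min_{\bpi}\ \sumn\frac{d_i^2}{\pi_{n,i}}
  \quad\text{subject to}\quad
  \sumn\pi_{n,i}=1,\ \ 0<\pi_{n,i}\le s_n^{-1}.
\]
The only change from the subsampling-with-replacement program of Theorem~\ref{thm:3} is the box constraint $\pi_{n,i}\le s_n^{-1}$ forced by Poisson subsampling; without it the Cauchy--Schwarz minimizer is $\pi_{n,i}\propto d_i$, so all the work lies in tracking how the upper cap deforms this solution.

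Since $t\mapsto1/t$ is strictly convex on $(0,\infty)$ and the feasible region is a convex polytope, the objective is strictly convex and the Karush--Kuhn--Tucker conditions are necessary and sufficient for the unique global minimizer. Attaching a multiplier $\lambda$ to the equality constraint and multipliers $\mu_i\ge0$ to the upper bounds (the lower bounds stay inactive because the objective diverges as $\pi_{n,i}\to0$), stationarity reads $-d_i^2/\pi_{n,i}^2+\lambda+\mu_i=0$, hence $\pi_{n,i}=d_i/\sqrt{\lambda+\mu_i}$. Complementary slackness $\mu_i(\pi_{n,i}-s_n^{-1})=0$ then partitions the indices into \emph{capped} ones with $\mu_i>0$, $\pi_{n,i}=s_n^{-1}$ and \emph{proportional} ones with $\mu_i=0$, $\pi_{n,i}=d_i/\sqrt{\lambda}$. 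Setting $H:=\sqrt{\lambda}/s_n$, an index is capped exactly when $d_i>H$ and proportional when $d_i\le H$, so both branches combine into $\pi_{n,i}=(d_i\wedge H)/(s_nH)$; after checking $\sumn(d_i\wedge H)=s_nH$ this is precisely \eqref{eq:11}.

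It then remains to pin down the level $H$ and the count of capped indices. Because capping is governed by the single threshold $H$, the capped set is an upper tail of the order statistics, say the largest $g$ of them, leaving the proportional indices $d_{(1)},\dots,d_{(n-g)}$. Substituting $\pi_{n,i}=s_n^{-1}$ for the $g$ capped indices and $\pi_{n,i}=d_{(i)}/(s_nH)$ for the rest into $\sumn\pi_{n,i}=1$ and solving for $H$ yields exactly \eqref{eq:7}. Finally I would identify $g$ through the two feasibility requirements ``the largest proportional index sits below the level'' and ``the smallest capped index would overshoot if left uncapped,'' which rewrite as $d_{(n-g)}<H$ and $d_{(n-g+1)}\ge\sum_{i=1}^{n-g+1}d_{(i)}/(s_n-g+1)$, i.e.\ \eqref{eq:9}, with the convention $d_{(n+1)}=\infty$ covering the fully uncapped case $g=0$ (in which the solution collapses back to the Theorem~\ref{thm:3} form).

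The KKT calculation above is routine; the real obstacle is the discrete water-filling step, namely showing that there is a unique integer $g$ at which the two inequalities of \eqref{eq:9} hold simultaneously. The clean observation driving this is that the first inequality at level $g$ is exactly the negation of the second inequality at level $g+1$, so \eqref{eq:9} amounts to locating a single sign change of the sequence $q(g)=d_{(n-g+1)}/\sum_{i=1}^{n-g+1}d_{(i)}-1/(s_n-g+1)$. The care lies in proving that this sequence changes sign only once—a mediant-type monotonicity argument comparing $H_g$ and $H_{g-1}$—and in tying that single crossing back to the uniqueness of the strictly convex minimizer, so that the threshold $H$ and the cutoff $g$ are genuinely well defined; the boundary regimes $g=0$ and $g\uparrow s_n$ must also be checked separately.
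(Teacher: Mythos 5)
Your proposal matches the paper's own proof essentially step for step: reduce $\tr\{\Lambda_{n,P}(\htheta_n)\}$ to $\sumn\|\dm(Z_i,\htheta_n)\|^2/\pi_{n,i}$ since the second term in \eqref{eq:2} is free of $\bpi$, solve the capped convex program by KKT/complementary slackness (the paper uses slack variables $\omega_i^2$, an equivalent bookkeeping), combine the capped and proportional branches via $\sumn(\|\dm(Z_i,\htheta_n)\|\wedge H)=s_nH$, and recover $H$ and $g$ from the normalization constraint. The only divergence is that you flag uniqueness of the integer $g$ in \eqref{eq:9} as the main remaining obstacle; the paper does not address this either (it simply takes $g$ to be the number of capped indices produced by the KKT solution, whose existence follows from the strictly convex program having a minimizer, and the two inequalities in \eqref{eq:9} are routine algebraic restatements of $\|\dm(Z,\htheta_n)\|_{(n-g)}<H\le\|\dm(Z,\htheta_n)\|_{(n-g+1)}$), so your extra sign-change argument is a refinement rather than a needed repair.
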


\begin{remark}\normalfont
  For a general choice of $L$, we can obtain optimal subsampling probabilities by replacing $\|\dm(Z_i,\htheta_n)\|$ with $\|\dm(Z_i,\htheta_n)\|_L=\|L\ddM^{-1}_n(\htheta_n)\dm(Z_i,\htheta_n)\|$. However, these quantities require $O(nd^2)$ time to compute when $\ddM_n^{-1}(\htheta_n)$ and $\dm(Z_i,\htheta_n)$ are available, where $n$ is the full data sample size and $d$ is dimension of $\htheta_n$. 
On the other hand, it only \black takes $O(nd)$ time to compute all $\|\dm(Z_i,\htheta_n)\|$'s.  %
  Thus the choice of $L=\ddM_n(\htheta_n)$ has a significant computational advantage. 
\end{remark}

\begin{remark}\normalfont
  In Theorems~\ref{thm:3} and \ref{thm:4}, $\pi_{n,Ri}^{\mvc}$ in \eqref{eq:SSPmMSE} and $\pi_{n,Pi}^{\mvc}$ in \eqref{eq:11} have both similarities and differences. Assuming that $\|\dm(Z_i,\htheta_n)\|>0$ for all $i$, then $0<\pi_{n,Ri}^{\mvc}<1$ while $0<\pi_{n,Pi}^{\mvc}\le\oner$. This means that the inclusion of any data point through optimal subsampling with replacement is random, while the inclusion of data points with $\pi_{n,Pi}^{\mvc}=\oner$ is deterministic through optimal Poisson subsampling. The order statistics constraint in \eqref{eq:9} indicates that if there are data points such that $\frac{s_n}{n}\|\dm(Z_i,\htheta_n)\|>\onen\sumjn\|\dm(Z_j,\htheta_n)\|$, then $\pi_{n,Ri}^{\mvc}$ and $\pi_{n,Pi}^{\mvc}$ are different. This means that if the subsampling ratio $\frac{s_n}{n}$ is larger or if the tail of the distribution of $\|\dm(Z,\htheta_n)\|$ is heavier, then optimal probabilities for Poisson subsampling and subsampling with replacement are more likely to be different.   
 If $s_n\|\dm(Z,\htheta_n)\|_{(n)}<\sumn\|\dm(Z_i,\htheta_n)\|$, then $\pi_{n,Ri}^{\mvc}$ and $\pi_{n,Pi}^{\mvc}$ are identical. This condition is true with probability approaching one under some conditions, e.g., when $s_n\log n=o(n)$ and the distribution of $\|\dm(Z,\htheta_n)\|$ has a sub-Gaussian tail because in this case $\frac{s_n}{n}\|\dm(Z,\htheta_n)\|_{(n)}=\op$ and  $\onen\sumn\|\dm(Z_i,\htheta_n)\|$ goes to a positive constant in probability.
\end{remark}

\begin{remark}\normalfont
  In Theorem~\ref{thm:4}, $H$ is the threshold so that all $\pi_{n,Pi}^{\mvc}$ are no larger than $\oner$, and it satisfies that
  \begin{equation}\label{eq:15}
    \|\dm(Z,\htheta_n)\|_{(n-g)} < H \le \|\dm(Z,\htheta_n)\|_{(n-g+1)}.
  \end{equation}
  Here $g$ is the number of cases that $\pi_{n,Pi}^{\mvc}=\oner$, i.e., the number of data points that will be included in the subsample for sure.
\end{remark}

Now we discuss an example to illustrate the optimal structural results. Additional examples are available in Section~\ref{sec:addit-exampl-optim} of the Appendix.

\begin{example}[Binary response models]\normalfont
  \label{example1}
Consider a binary classification model such that %
\begin{equation*}
  \Pr(y_i=1)=p(\x_i,\btheta), \quad i=1, ..., n,
\end{equation*}
where $y_i\in\{0,1\}$ is the binary class label, $\x_i$ is the covariate, and $\btheta$ is the unknown parameter. %
To estimate $\btheta$ using the maximum likelihood estimator (MLE), let $Z_i=(\x_i, y_i)$ and
\begin{align*}
  \m(Z_i,\btheta)=y_i\log\{p(\x_i,\btheta)\}
    +(1-y_i)\log\{1-p(\x_i,\btheta)\}.
\end{align*}
Direct calculations yield that
\begin{align}
  \dm(Z_i,\htheta_n)
  &=\frac{y_i-\hat{p}_i}{\hat{p}_i(1-\hat{p}_i)}\hat{\dot{p}}_i,
    \quad\text{and}\quad
  \|\dm(Z_i,\htheta_n)\|
  =\frac{|y_i-\hat{p}_i|}{\hat{p}_i(1-\hat{p}_i)}\|\hat{\dot{p}}_i\|,
  \label{eq:17}
\end{align}
where $\hat{p}_i=p(\x_i,\htheta_n)$, and $\hat{\dot{p}}_i=\dot{p}(\x_i,\htheta_n)$ is the gradient of $p(\x_i,\btheta)$ evaluated at $\htheta_n$. We can obtain optimal sampling probabilities by inserting the expression in~\eqref{eq:17} 
into Theorems~\ref{thm:3} and \ref{thm:4}.

To obtain the general L-optimal subsampling probabilities with any $L$, the Hessian matrix $\ddm(Z_i,\hat\btheta)$ of $\m(Z_i,\hat\btheta)$ is
\begin{align}\label{eq:31}
  \ddm(Z_i,\hat\btheta)
  &=\frac{y_i-\hat{p}_i}{\hat{p}_i(1-\hat{p}_i)}\hat{\ddot{p}}_i
    -\Big\{\frac{y_i}{\hat{p}_i^2}+\frac{1-y_i}{(1-\hat{p}_i)^2}\Big\}
    \hat{\dot{p}}_i\hat{\dot{p}}_i\tp,
\end{align}
where $\hat{\ddot{p}}_i=\ddot{p}(\x_i,\htheta_n)$ is the Hessian matrix of $p(\x_i,\btheta)$ evaluated at $\htheta_n$. 
Thus, we obtain the general L-optimal sampling probabilities by using
\begin{align}\label{eq:33}
  \|\dm(Z_i,\htheta_n)\|_L
  &=\frac{|y_i-\hat{p}_i|}{\hat{p}_i(1-\hat{p}_i)}
    \|L\ddM_n^{-1}(\htheta_n)\hat{\dot{p}}_i\|,
\end{align}
to replace $\|\dm(Z_i,\htheta_n)\|$ in Theorems~\ref{thm:3} and \ref{thm:4}, for any $L$, where
\begin{align}\label{eq:40}
  \ddM_n(\htheta_n)
  &=\sumn\frac{y_i-\hat{p}_i}{\hat{p}_i(1-\hat{p}_i)}\hat{\ddot{p}}_i
    -\sumn\Big\{\frac{y_i}{\hat{p}_i^2}
    +\frac{1-y_i}{(1-\hat{p}_i)^2}\Big\}
    \hat{\dot{p}}_i\hat{\dot{p}}_i\tp.
\end{align}
Under some regularity conditions,  $\onen\sumn\frac{y_i-\hat{p}_i}{\hat{p}_i(1-\hat{p}_i)}\hat{\ddot{p}}_i$ is a small term in \eqref{eq:40}, and therefore $\ddM_n(\htheta_n)$ in \eqref{eq:33} can be replaced by
\begin{align}\label{eq:16}
    \ddM_n^a(\htheta_n)=-\onen\sumn       \Big\{\frac{y_i}{\hat{p}_i^2}+\frac{1-y_i}{(1-\hat{p}_i)^2}\Big\}
    \hat{\dot{p}}_i\hat{\dot{p}}_i\tp.
\end{align}
Thus, there is no need to calculate the Hessian matrix $\hat{\ddot{p}}_i$.

From~\eqref{eq:17} or \eqref{eq:33},
the optimal subsampling probabilities are proportional to $|y_i-\hat{p}_i|$. Thus if $y_i=1$, data points with smaller values of $\hat{p}_i$ are sampled with higher probabilities; if $y_i=0$, data points with larger values of $\hat{p}_i$ are sampled with higher probabilities. The optimal subsampling probabilities give higher preference to data points that are closer to the class boundary. This increases the classification accuracy %
because if these data points can be classified correctly, then other data points are easier to classify.

Specifically for Logistic regression in which
\begin{equation*}
  p(\x_i,\btheta)=\frac{e^{\x_i\tp\btheta}}{(1+e^{\x_i\tp\btheta})},
\end{equation*}
we have $\hat{\dot{p}}_i=\hat{p}_i(1-\hat{p}_i)\x_i$ and $\hat{\ddot{p}}_i=\hat{p}_i(1-\hat{p}_i)(1-2\hat{p}_i)\x_i\x_i\tp$. Thus, for this case
\begin{align}
  \|\dm(Z_i,\htheta_n)\|
  &=|y_i-\hat{p}_i|\|\x_i\|, \quad\text{ and }\quad\label{eq:32}\\
  \|\dm(Z_i,\htheta_n)\|_L
  &=|y_i-\hat{p}_i|\|L\ddM_n^{-1}(\htheta_n)\x_i\|,
    \quad\text{with}\quad\label{eq:38}
  \ddM_n(\htheta_n)=-\onen\sumn\hat{p}_i(1-\hat{p}_i)\x_i\x_i\tp.
\end{align}
If the expression %
in \eqref{eq:32}, or the expression in \eqref{eq:38} with $L=\I$, is used in Theorems~\ref{thm:3}, the structural results for optimal probabilities of subsampling with replacement are identical to those in \cite{WangZhuMa2017}. If \eqref{eq:16} is used, then the expression of $\ddM_n^a(\htheta_n)$ is 
$\ddM_n^a(\htheta_n)=-\onen\sumn(y_i-\hat{p}_i)^2\x_i\x_i\tp$, which has the same limit as $\ddM_n(\htheta_n)$ in \eqref{eq:38}.

From Theorem~\ref{thm:4} we see that if there are data points such that $\frac{s_n}{n}|y_i-\hat{p}_i|\|\x_i\|>\onen\sumjn|y_j-\hat{p}_j|\|\x_j\|$, then optimal probabilities for Poisson subsampling are different from that for subsampling with replacement.

\end{example}

\subsection{Practical algorithms}\label{sec:pract-impl}
The optimal subsampling probabilities depend on the full data estimator $\htheta_n$, so the structural results in the previous section do not translate into useful algorithms directly. We need a pilot estimator to approximate the optimal subsampling probabilities in order to obtain practically implementable algorithms.
This can be done by taking a pilot subsample of size $s_0$ through a subsampling distribution that does not depend on $\htheta_n$. For simplicity, we use the uniform subsampling distribution $\bpi^{\uni}=\{\pi_{n,i}=\onen\}_{i=1}^n$, and present the approximated optimal subsampling with replacement procedure in Algorithm~\ref{alg:3}.

Compared with the exact $\pi_{n,Ri}^{\mvc}$, the approximated $\tilde{\pi}_{n,Ri}^{\mvc}$ in \eqref{eq:13} are subject to additional disturbance due to the randomness of $\ttheta_{s_0,R}^{0*}$, the maximizer of \eqref{eq:12}. From Theorem~\ref{thm:1},  %
the subsampling probabilities are in the denominators of $\Lambda_{n,R}(\htheta_n)$.  %
Thus the additional disturbance may be amplified for data points with $\pi_{n,Ri}^{\mvc}$ being close to zero, and this may inflate the asymptotic variance of the subsample estimator.
To protect the estimator from these data points, we adopt the idea of defensive importance sampling \citep{hesterberg1995weighted, owen2000safe} and mix the approximated optimal subsampling distribution with the uniform subsampling distribution. Specifically, we use $\tilde{\pi}_{n,R\alpha i}^{\mvc}$ instead of $\tilde{\pi}_{n,Ri}^{\mvc}$ in \eqref{eq:13} to perform the subsampling. The same idea was also adopted in \cite{PingMa2015-JMLR}.

In $\tilde{\bm{\pi}}_{R\alpha i}=\{\tilde{\pi}_{n,R\alpha i}^{\mvc}\}_{i=1}^n$, $\alpha$ controls the proportion of mixture, and $\tilde{\bm{\pi}}_{R\alpha i}$ is close to the optimal subsampling distribution if $\alpha$ is close to 0 while it is close to the uniform subsampling distribution if $\alpha$ is close to 1. If $\alpha>0$, then $n\pi_{n,R\alpha i}^{\mvc}$ are bounded away from zero, which add to robustness of the subsampling estimator.

\begin{algorithm}[H]%
\caption{Practical %
  algorithm based on optimal subsampling with replacement}
\label{alg:3}
\begin{algorithmic}
  \STATE $\bullet$ {\it Pilot subsampling}:
  use sampling with replacement %
  with $\bm\pi^{\uni}$ to obtain $\{Z_1^{0*}, ..., Z_{s_{0}}^{0*}\}$;   %
obtain $\ttheta_{s_0,R}^{0*}$ through maximizing\-\\[-5mm]
  \begin{equation}
    M_R^{0*}(\btheta)
    =\sum_{i=1}^{s_0}\frac{\m(Z_i^{0*},\btheta)}{s_0}.\label{eq:12}
  \end{equation}
  \STATE \-\\[-12mm] $\bullet$ {\it Approximated optimal subsampling:}
  \STATE calculate the whole subsampling distribution $\tilde{\bm{\pi}}_{R\alpha i}=\{\tilde{\pi}_{n,R\alpha i}^{\mvc}\}_{i=1}^n$, where $\alpha\in(0,1)$,
   \begin{align}
     \tilde{\pi}_{n,Ri}^{\mvc}
     &=\frac{\|\dm(Z_i,\ttheta_{s_0,R}^{0*})\|}
       {\sumjn\|\dm(Z_j,\ttheta_{s_0,R}^{0*})\|}%
     \quad\text{and}\quad
     \tilde{\pi}_{n,R\alpha i}^{\mvc}
     =(1-\alpha)\tilde{\pi}_{n,Ri}^{\mvc}+\alpha\onen;\label{eq:13}
   \end{align}
  \STATE \-\\[-12mm] use $\tilde{\bm{\pi}}_{R\alpha i}$ to take a subsample $\{Z_1^*, ..., Z_{s_n}^*\}$, and record the corresponding probabilities $\{\tilde{\pi}_{R\alpha 1}^{\mvc*}, ..., \tilde{\pi}_{R\alpha s}^{\mvc*}\}$.
  \STATE $\bullet$ {\it Estimation:}
 obtain $\ttheta_{s_n,R}^{\alpha}$ through maximizing\-\\[-3mm]
\begin{equation}\label{solution-subsample}
  M_{R\alpha}^{*}(\btheta) =%
  \sumrr\frac{\m(Z_i^*,\btheta)}{ns_n\tilde{\pi}_{n,R\alpha i}^{\mvc*}}.
\end{equation}
\end{algorithmic}
\end{algorithm}
\begin{algorithm}[H]%
\caption{Practical %
  algorithm based on optimal Poisson subsampling}
\label{alg:4}
\begin{algorithmic}
  \STATE $\bullet$ {\it Pilot subsampling}: 
  use Poisson sampling with $\bpi^{\uni}$ to obtain $\{Z_1^{0*}, ..., Z_{s_{0}^*}^{0*}\}$;
  \STATE
  obtain $\ttheta_{s_0,P}^{0*}$ through maximizing\-\\[-6mm]
  \begin{equation}
    M^{0*}_P(\btheta)
    =\sum_{i=1}^{s_0^*}\frac{\m(Z_i^{0*},\btheta)}{s_0^*};\label{eq:14}
  \end{equation}
  \STATE\-\\[-12mm]
     calculate\-\\[-9mm]
    \begin{align}
      H^{0*}&=\|\dm(Z_i^{0*},\ttheta_{s_0,P}^{0*})\|_{\frac{s}{bn}},
       \quad\text{and}\quad
    \Psi^{0*}
     =\sum_{i=1}^{s_0^*}\frac{\{\|\dm(Z_i^{0*},\ttheta_{s_0,P}^{0*})\|\wedge
      H^{0*}\}}{s_0^*}.\label{eq:5}
  \end{align}
  \STATE\-\\[-12mm] $\bullet$ {\it Approximated optimal subsampling:}
   For each $i$ of $i=1, ..., n$, 
   \STATE calculate\-\\[-9mm]
   \begin{align}
     \tilde{\pi}_{n,Pi}^{\mvc}
     &=\frac{\|\dm(Z_i,\ttheta_{s_0,P}^{0*})\|\wedge H^{0*}}{n\Psi^{0*}},
     \quad\text{and}\quad
     \tilde{\pi}_{n,P\alpha i}^{\mvc}
     =(1-\alpha)\tilde{\pi}_{n,Pi}^{\mvc}+\alpha\onen;\label{eq:4}
   \end{align}
   \STATE\-\\[-12mm]
   generate $u_i\sim U(0,1)$; 
   \STATE
   if {$u_i \le s_n\tilde{\pi}_{n,P\alpha i}^{\mvc}$}, 
   include $Z_i$ in the subsample and record $\tilde{\pi}_{n,P\alpha i}^{\mvc}$.
  \STATE $\bullet$ {\it Estimation:}
 obtain $\ttheta_{s_n,P}^{\alpha}$ through maximizing\-\\[-3mm]
\begin{equation}\label{solution-subsample25}
  M_{P\alpha}^{*}(\btheta) =\onen\sumr
  \frac{\m(Z_i^*,\btheta)}{(s_n\tilde{\pi}_{n,P\alpha i}^{\mvc*})\wedge1}.
\end{equation}

\end{algorithmic}
\end{algorithm}

For the optimal Poisson subsampling probability $\pi_{n,Pi}^{\mvc}$, we also need to use the pilot subsample to approximate $H$ and $\Psi=\onen\sumn\{\|\dm(Z_i,\htheta_n)\|\wedge H\}$ in order to determine the inclusion probability based on each data point itself, as described in Algorithm~\ref{alg:4}.
From \eqref{eq:15}, $H$ is between the $(n-g)$-th and the $(n-g+1)$-th order statistics of $\{\|\dm(Z_i,\htheta_n)\|\}_{i=1}^{n}$, and $g$ is between $0$ and $s_n$,
so we can roughly approximate $H$ with $\|\dm(Z_i^{0*},\ttheta_{s_0,P}^{0*})\|_{\frac{s_n}{bn}}$, the upper $\frac{s_n}{bn}$-th sample quantile of  $\{\|\dm(Z_i^{0*},\ttheta_{s_0,P}^{0*})\|\}_{i=1}^{s_0}$, where $b\ge1$ is a tuning parameter. Since $g$ is typically %
closer to $0$ and farther from $s_n$,
  taking $b=1$ %
  underestimates $H$ and the resulting subsampling probabilities lean towards the uniform subsampling probability (if $H\le\|\dm(Z,\htheta_n)\|_{(1)}$,  then $\pi_{n,Pi}^{\mvc}$ would be all equal to $\onen$).
When subsampling from massive data, $s_n$ is often much smaller than $n$ and %
the number of cases for $\|\dm(Z_i,\htheta_n)\|$ to be larger than $H$ is small. %
For this scenario, one may simply ignore $H$ and
use $\infty$ to replace $H$. This simple option in general overestimates $H$, but it may perform reasonably well for small subsampling ratios. %
For $\Psi$, it can be approximated by $\Psi^{0*}$ defined in \eqref{eq:5}.

When we use $\Psi^{0*}$ and $H^{0*}$ to replace $\Psi$ and $H$ in \eqref{eq:4}, it is possible that some  $\tilde{\pi}_{n,Pi}^{\mvc}$ in \eqref{eq:4} are larger than $\oner$ and thus $s_n\tilde{\pi}_{n,Pi}^{\mvc}$ are larger than one. %
Thus, we use one as a threshold in the denominator of \eqref{solution-subsample25}.

\begin{remark}\normalfont\label{rmk:6}
  In Algorithm~\ref{alg:3}, $\ttheta_{s_0,R}^{0*}$ and $\ttheta_{s_n,R}^{\alpha}$ can be combined to obtain an aggregated estimator,
\begin{align*}
  \ctheta_R
  =\big(s_0\ddM_R^{0*}+s\ddM_R^{*}\big)^{-1}
  \times \big(s_0\ddM_R^{0*}\times\ttheta_{s_0,R}^{0*}
    +s\ddM_R^{*}\times\ttheta_{s_n,R}^{\alpha}\big),
\end{align*}
where $\ddM_R^{0*}$ is the Hessian matrix of $\M_R^{0*}(\btheta)$ in \eqref{eq:12} evaluated at $\ttheta_{s_0,R}^{0*}$ and $\ddM_R^{*}$ is the Hessian matrix of $\M_R^{*}(\btheta)$ in \eqref{solution-subsample} evaluated at $\ttheta_{s_n,R}^{\alpha}$. Similarly, in Algorithm~\ref{alg:4}, $\ttheta_{s_0,P}^{0*}$ and $\ttheta_{s_n,P}^{\alpha}$ can be combined to obtain an aggregated estimator,
\begin{align*}
  \ctheta_P
  =\big(s_0^*\ddM_P^{0*}+s_n\ddM_P^{*}\big)^{-1}
  \times \big(s_0^*\ddM_P^{0*}\times\ttheta_{s_0,P}^{0*}
    +s_n\ddM_P^{*}\times\ttheta_{s_n,P}^{\alpha}\big),
\end{align*}
where $\ddM_P^{0*}$ is the Hessian matrix of $\M_P^{0*}(\btheta)$ in \eqref{eq:14} evaluated at $\ttheta_{s_0,P}^{0*}$ and $\ddM_P^{*}$ is the Hessian matrix of $\M_P^{*}(\btheta)$ in \eqref{solution-subsample25} evaluated at $\ttheta_{s_n,P}^{\alpha}$. 
Here,  $\ctheta_R$ is obtained as a linear combination of $\ttheta_{s_0,R}^{0*}$ and $\ttheta_{s_n,R}^{\alpha}$, and $\ctheta_P$ is obtained as a linear combination of $\ttheta_{s_0,P}^{0*}$ and $\ttheta_{s_n,P}^{\alpha}$ in a way similar to the aggregation step in the divide-and-conquer method \citep{LinXie2011,schifano2016online}. This further improves the estimation efficiency. %
\end{remark}

\subsection{Theoretical analysis of practical algorithms}
We obtain the following distributional results in Theorems~\ref{thm:5} and \ref{thm:6} for Algorithms~\ref{alg:3} and \ref{alg:4}, respectively.
\begin{theorem}\label{thm:5}
  For $\ttheta_{s_n,R}^{\alpha}$ obtained from Algorithm~\ref{alg:3}, under Assumptions~\ref{ass:1}-\ref{ass:4}, as $s_0$, $s_n$, and $n$ get large, the following result holds. Given $\Dn$ and $\ttheta_{s_0,R}^{0*}$ in probability,
\begin{equation*}
  \sqrt{s_n}\{V_{n,R}^{\alpha}(\htheta_n)\}^{-1/2}(\ttheta_{s_n,R}^{\alpha}-\htheta_n)
  \rightarrow \Nor\left(\0,\I\right),
  \end{equation*}
in conditional distribution, where $V_{n,R}^{\alpha}(\htheta_n)=\ddM_n^{-1}(\htheta_n)\Lambda_R^{\alpha}(\htheta_n)\ddM_n^{-1}(\htheta_n)$,
  \begin{align*}
    \Lambda_R^{\alpha}(\htheta_n)
    &=\frac{1}{n^2}\sumn\frac{\dm(Z_i,\htheta_n)\dm\tp(Z_i,\htheta_n)}
      {\pi_{n,R\alpha i}^{\mvc}(\htheta_n)},
      \quad\text{and}\quad
    \pi_{n,R\alpha i}^{\mvc}(\htheta_n)
    =(1-\alpha)\pi_{n,Ri}^{\mvc}(\htheta_n)+\alpha\onen.
\end{align*}
\end{theorem}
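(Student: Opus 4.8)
The plan is to reduce Theorem~\ref{thm:5} to Theorem~\ref{thm:1} by treating the data-adaptive distribution $\tilde{\bm{\pi}}_{R\alpha}=\{\tilde{\pi}_{n,R\alpha i}^{\mvc}\}_{i=1}^n$ as a fixed subsampling distribution once we condition on both $\Dn$ and the pilot $\ttheta_{s_0,R}^{0*}$. The crucial observation is that the defensive mixing makes Assumption~\ref{ass:6} automatic: since $\tilde{\pi}_{n,R\alpha i}^{\mvc}=(1-\alpha)\tilde{\pi}_{n,Ri}^{\mvc}+\alpha\onen\ge\alpha\onen$, we have $\max_i(n\tilde{\pi}_{n,R\alpha i}^{\mvc})^{-1}\le\alpha^{-1}=\Op$. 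Conditioning on $\ttheta_{s_0,R}^{0*}$ renders $\tilde{\bm{\pi}}_{R\alpha}$ a deterministic function of $\Dn$, so the argument proving Theorem~\ref{thm:1} applies verbatim and yields, in conditional distribution given $\Dn$ and $\ttheta_{s_0,R}^{0*}$ in probability,
\begin{equation*}
  \sqrt{s_n}\{\tilde{V}_{n,R}^{\alpha}(\htheta_n)\}^{-1/2}(\ttheta_{s_n,R}^{\alpha}-\htheta_n)\cvdD\Nor(\0,\I),
\end{equation*}
where $\tilde{V}_{n,R}^{\alpha}(\htheta_n)=\ddM_n^{-1}(\htheta_n)\tilde{\Lambda}_R^{\alpha}(\htheta_n)\ddM_n^{-1}(\htheta_n)$ and $\tilde{\Lambda}_R^{\alpha}(\htheta_n)$ is defined exactly as $\Lambda_R^{\alpha}(\htheta_n)$ but with the denominators $\pi_{n,R\alpha i}^{\mvc}(\htheta_n)$ replaced by the pilot-based $\tilde{\pi}_{n,R\alpha i}^{\mvc}$; note the numerators $\dm(Z_i,\htheta_n)\dm\tp(Z_i,\htheta_n)$ are unaffected, as the sampling distribution enters only the denominator.

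By Slutsky's theorem it then suffices to replace $\tilde{V}_{n,R}^{\alpha}(\htheta_n)$ by the stated $V_{n,R}^{\alpha}(\htheta_n)$, which reduces to showing $\tilde{\Lambda}_R^{\alpha}(\htheta_n)-\Lambda_R^{\alpha}(\htheta_n)=\op$. First I would establish consistency of the pilot, $\ttheta_{s_0,R}^{0*}-\btheta_0=\op$, which follows from the standard M-estimation argument already used for Theorem~\ref{thm:1}: under uniform sampling $\M_R^{0*}(\btheta)$ converges uniformly to $M(\btheta)=\Exp\{\m(Z,\btheta)\}$ on the compact parameter set (Assumptions~\ref{ass:1}--\ref{ass:2}), whose unique maximizer is $\btheta_0$. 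Since $\htheta_n$ is likewise consistent, $\|\htheta_n-\ttheta_{s_0,R}^{0*}\|=\op$ as $s_0,s_n,n\to\infty$.

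The main obstacle is the uniform control of the reciprocal difference, because the pilot-based and the $\htheta_n$-based structural probabilities sit in the \emph{denominators} of an average over all $n$ data points. Writing $p_i=n\pi_{n,R\alpha i}^{\mvc}(\htheta_n)$ and $\tilde{p}_i=n\tilde{\pi}_{n,R\alpha i}^{\mvc}$, both are bounded below by $\alpha$, so $|\tilde{p}_i^{-1}-p_i^{-1}|=|p_i-\tilde{p}_i|/(\tilde{p}_ip_i)\le\alpha^{-2}|p_i-\tilde{p}_i|$; this is exactly where the mixing is indispensable, since it prevents a denominator blow-up that would otherwise carry a fatal factor of order $n$. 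The difference $|p_i-\tilde{p}_i|$ is governed by $\big|\,\|\dm(Z_i,\htheta_n)\|-\|\dm(Z_i,\ttheta_{s_0,R}^{0*})\|\,\big|$ together with the two common normalizing sums $\onen\sumjn\|\dm(Z_j,\cdot)\|$, which converge to the same positive constant $\Exp\|\dm(Z,\btheta_0)\|$. By the reverse triangle inequality and the Lipschitz bound on $\dm$ implied by Assumption~\ref{ass:3}, the former is at most $\|\htheta_n-\ttheta_{s_0,R}^{0*}\|\sup_{\btheta}\|\ddm(Z_i,\btheta)\|$. Combining these with the fourth-moment condition in Assumption~\ref{ass:4} and the finite Hessian moments in Assumption~\ref{ass:3} (so that by Cauchy--Schwarz the relevant averages are $\Op$), I would obtain, for a generic constant $C$,
\begin{equation*}
  \big\|\tilde{\Lambda}_R^{\alpha}(\htheta_n)-\Lambda_R^{\alpha}(\htheta_n)\big\|
  \le\frac{C}{\alpha^2}\|\htheta_n-\ttheta_{s_0,R}^{0*}\|\cdot\onen\sumn\|\dm(Z_i,\htheta_n)\|^2\sup_{\btheta}\|\ddm(Z_i,\btheta)\|=\op,
\end{equation*}
since the average is $\Op$ and the pilot error is $\op$. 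A final application of Proposition~\ref{prop2} to translate between conditional and unconditional probability statements completes the argument.
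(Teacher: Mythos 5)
Your proposal is correct and takes essentially the same route as the paper: conditioning on $\Dn$ and $\ttheta_{s_0,R}^{0*}$ and rerunning the Theorem~\ref{thm:1} machinery with the mixed probabilities (whose floor $\alpha\onen$ substitutes for Assumption~\ref{ass:6}) is precisely what the paper's Lemmas~\ref{lem:two-step1} and \ref{lem:PoiDistributionofdM} do, yielding conditional normality with the pilot-based variance $\Lambda_R^\alpha(\ttheta_{s_0,R}^{0*})$. Your variance-replacement step --- the $\alpha^{-2}$ bound on the reciprocal difference, the mean-value/Lipschitz control of $\big|\|\dm(Z_i,\htheta_n)\|-\|\dm(Z_i,\ttheta_{s_0,R}^{0*})\|\big|$ via Assumption~\ref{ass:3}, and Cauchy--Schwarz with the fourth-moment condition of Assumption~\ref{ass:4} --- matches the paper's $\Delta_{1i},\Delta_{2i}$ computation, differing only cosmetically in that you bound the Hessian by $\sup_{\btheta}\|\ddm(Z_i,\btheta)\|$ (justified by compactness and the Lipschitz condition) where the paper uses the mean-value function $h(Z_i)$.
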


\begin{theorem}\label{thm:6}
  For $\ttheta_{s_n,P}^{\alpha}$ obtained from Algorithm~\ref{alg:4}, %
  under Assumptions~\ref{ass:1}-\ref{ass:4}, as $s_0$, $s_n$, and $n$ get large, if $s_0=o(n)$, $\varrho_n=s_n/(bn)\rightarrow\varrho\in[0,1)$, and the distribution of $Z$ is continuous, the following result hold. If $\varrho=0$, then given $\Dn$ and the pilot estimates in probability,  %
\begin{equation*}
  \sqrt{s_n}\{V_{n,P}^{\alpha}(\htheta_n)\}^{-1/2}(\ttheta_{s_n,P}^{\alpha}-\htheta_n)
  \rightarrow \Nor\left(\0,\I\right),
  \end{equation*}
in conditional distribution, where $V_{n,P}^{\alpha}(\htheta_n)=\ddM_n^{-1}(\htheta_n)\Lambda_{n,P}^{\alpha}(\htheta_n)\ddM_n^{-1}(\htheta_n)$,
\begin{align}\label{eq:30}
    \Lambda_{n,P}^{\alpha}(\htheta_n)
    &=\frac{1}{n^2}\sumn\frac{\{1-s_n\pi_{n,P\alpha i}^{\mvc}(\htheta_n)\}
      \dm(Z_i,\htheta_n)\dm\tp(Z_i,\htheta_n)}
      {\pi_{n,P\alpha i}^{\mvc}(\htheta_n)}
\end{align}
and $\pi_{n,P\alpha i}^{\mvc}(\htheta_n) =(1-\alpha)\pi_{n,Pi}^{\mvc}(\htheta_n)+\alpha\onen$.
If $\varrho>0$, then $\pi_{n,Pi}^{\mvc}(\htheta_n)$ in \eqref{eq:30} is replaced by $\pi_{n,Pi}^{\mvc}
  =\frac{\|\dm(Z_i,\htheta_n)\|\wedge H_{\varrho_n}}
  {\sumjn\{\|\dm(Z_j,\htheta_n)\|\wedge H_{\varrho_n}\}}$, where $H_{\varrho_n}$ is the $\varrho_n$-th upper sample quantile of $\|\dm(Z_i,\htheta_n)\|$'s for $i=1, ..., n$.
\end{theorem}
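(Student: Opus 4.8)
The plan is to follow the standard route for the asymptotic normality of an M-estimator defined through a Poisson subsample, exactly as in the proof of Theorem~\ref{thm:2}, and to treat the data-driven ingredients of the subsampling probabilities — the pilot estimator $\ttheta_{s_0,P}^{0*}$, the estimated threshold $H^{0*}$, and the normalizer $\Psi^{0*}$ — as perturbations whose effect on the limiting variance vanishes. Writing $\delta_i\in\{0,1\}$ for the conditionally independent inclusion indicator of $Z_i$ with inclusion probability $\phi_i=(s_n\tilde\pi_{n,P\alpha i}^{\mvc})\wedge1$, the objective in \eqref{solution-subsample25} is $M_{P\alpha}^{*}(\btheta)=\onen\sumn\delta_i\m(Z_i,\btheta)/\phi_i$, which is conditionally unbiased for $\M_n(\btheta)$ with conditionally independent summands. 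Since $\ttheta_{s_n,P}^{\alpha}$ maximizes $M_{P\alpha}^{*}$, I would set $\dot M_{P\alpha}^{*}(\ttheta_{s_n,P}^{\alpha})=\0$, expand the score around $\htheta_n$ as
\begin{equation*}
\0=\dot M_{P\alpha}^{*}(\htheta_n)+\ddot M_{P\alpha}^{*}(\bar\btheta)(\ttheta_{s_n,P}^{\alpha}-\htheta_n),
\end{equation*}
with $\bar\btheta$ on the segment between $\ttheta_{s_n,P}^{\alpha}$ and $\htheta_n$, and then combine a conditional central limit theorem for $\sqrt{s_n}\,\dot M_{P\alpha}^{*}(\htheta_n)$ with a conditional law of large numbers $\ddot M_{P\alpha}^{*}(\bar\btheta)\to\ddM_n(\htheta_n)$ through Slutsky's theorem.

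First I would handle the estimated probabilities. Applying Theorem~\ref{thm:2} to the uniform Poisson pilot gives $\ttheta_{s_0,P}^{0*}\to\btheta_0$ in conditional probability as $s_0\to\infty$, hence $\ttheta_{s_0,P}^{0*}-\htheta_n=\op(1)$ since $\htheta_n\to\btheta_0$ by Proposition~\ref{prop1}; the condition $s_0=o(n)$ keeps the pilot stage negligible relative to the full data. Because the distribution of $Z$, and therefore of $\|\dm(Z,\btheta_0)\|$, is continuous, the pilot sample quantile $H^{0*}$ in \eqref{eq:5} is consistent for the corresponding population upper quantile: when $\varrho>0$ it converges to the $\varrho$-upper quantile, matching $H_{\varrho_n}$, whereas when $\varrho=0$ the quantile level $s_n/(bn)\to0$ pushes $H^{0*}$ beyond the effective range so that the truncation $\|\dm\|\wedge H^{0*}$ is inactive in the limit, recovering the untruncated $\pi_{n,Pi}^{\mvc}(\htheta_n)$. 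Combining these with the continuity of $\dm$ in $\btheta$ (Assumption~\ref{ass:3}) and $\Psi^{0*}\to\Psi$, I would show that on the event that all $s_n\tilde\pi_{n,P\alpha i}^{\mvc}\le1$ the conditional variance of $\sqrt{s_n}\,\dot M_{P\alpha}^{*}(\htheta_n)$ equals
\begin{equation*}
\frac{1}{n^2}\sumn\frac{(1-s_n\tilde\pi_{n,P\alpha i}^{\mvc})\,\dm(Z_i,\htheta_n)\dm\tp(Z_i,\htheta_n)}{\tilde\pi_{n,P\alpha i}^{\mvc}},
\end{equation*}
which converges to $\Lambda_{n,P}^{\alpha}(\htheta_n)$ in \eqref{eq:30}. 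The defensive mixing is essential here: $\tilde\pi_{n,P\alpha i}^{\mvc}\ge\alpha\onen$ yields the uniform bound $1/\tilde\pi_{n,P\alpha i}^{\mvc}\le n/\alpha$, which keeps every summand of order $n^{-1}$ and prevents a small estimated probability from inflating the variance.

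For the conditional CLT, I would apply the Lindeberg--Feller theorem to the triangular array $\{\delta_i\dm(Z_i,\htheta_n)/(n\phi_i)\}_{i=1}^n$, whose conditional means sum to $\dot{\M}_n(\htheta_n)=\0$; the Lindeberg condition follows from $\phi_i\ge s_n\alpha\onen$, giving $1/\phi_i=O(n/s_n)$, together with the fourth-moment control $\onen\sumn\|\dm(Z_i,\htheta_n)\|^{4}=\Op$ of Assumption~\ref{ass:4}. For the Hessian, conditional consistency $\ttheta_{s_n,P}^{\alpha}-\htheta_n=\opD(1)$ would be established first from concavity (Assumption~\ref{ass:2}) and the uniform convergence of $M_{P\alpha}^{*}$ to $\M_n$; then the Lipschitz bound on $\ddm$ (Assumption~\ref{ass:3}), together with $\ddot M_{P\alpha}^{*}(\htheta_n)\to\ddM_n(\htheta_n)$, gives $\ddot M_{P\alpha}^{*}(\bar\btheta)\to\ddM_n(\htheta_n)$. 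Solving the expansion and rescaling by $\{V_{n,P}^{\alpha}(\htheta_n)\}^{-1/2}$ then yields the stated result.

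The hard part will be the treatment of the estimated threshold $H^{0*}$ and the propagation of its sampling error into $\Lambda_{n,P}^{\alpha}$. Unlike a smooth plug-in, $H^{0*}$ enters through the nonsmooth map $x\mapsto x\wedge H^{0*}$, so I must control both the quantile estimation error of $H^{0*}$ and the number of indices whose truncation status flips when $H^{0*}$ replaces the population threshold; the continuity of $Z$ and the clean separation of the two regimes $\varrho=0$ and $\varrho>0$ are precisely what render this error negligible. A secondary technical point is that the plug-in probabilities may occasionally exceed $\oner$, so $s_n\tilde\pi_{n,P\alpha i}^{\mvc}$ can exceed one; I would argue that the safeguard $(\cdot)\wedge1$ in \eqref{solution-subsample25} contributes only an asymptotically negligible amount to both the score and its variance, so that the limit is governed by $\Lambda_{n,P}^{\alpha}(\htheta_n)$ as claimed.
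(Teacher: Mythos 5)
Your proposal follows essentially the same route as the paper's proof: a conditional Lindeberg--Feller CLT for the inverse-probability-weighted score and a conditional law of large numbers for the Hessian, both exploiting the defensive bound $\tilde{\pi}_{n,P\alpha i}^{\mvc}\ge\alpha\onen$ (the paper's Lemmas~\ref{lem:plem1} and~\ref{lem:poiddMr}), consistency of $H^{0*}$ and $\Psi^{0*}$ via order-statistic and quantile arguments under the continuity of $Z$ (Lemma~\ref{lem:H}), and the plug-in-to-oracle variance replacement $\Lambda_{n,P}^{\alpha}(\ttheta_{s_0,P}^{0*})-\Lambda_{P\varrho_n}^{\alpha}(\htheta_n)=\op$ split into the regimes $\varrho=0$ and $\varrho>0$, with the $\varrho=0$ case reducing to the untruncated probabilities exactly as you anticipate. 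The only cosmetic difference is that the paper obtains the linearization $\sqrt{s_n}(\ttheta_{s_n,P}^{\alpha}-\htheta_n)=\ddM_n^{-1}(\htheta_n)\sqrt{s_n}\dM_{P\alpha}^{*}(\htheta_n)+\op$ from the basic corollary of \cite{hjort2011asymptotics} applied to the localized concave objective, rather than your separate consistency-plus-score-expansion step, which is the same device the paper itself uses for Theorems~\ref{thm:1} and~\ref{thm:5}.
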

\begin{remark}\normalfont\label{remark8}
  Denote $\Lambda_R^\opt(\htheta_n)$ and $\Lambda_{n,P}^\opt(\htheta_n)$ as  $\Lambda_{n,R}(\htheta_n)$ and $\Lambda_{n,P}(\htheta_n)$ with optimal subsampling probabilities that produce the minimum trace values, respectively.
  In Theorems~\ref{thm:5} and \ref{thm:6}, $\Lambda_R^{\alpha}(\htheta_n)$ and $\Lambda_{n,P}^{\alpha}(\htheta_n)$ are different from $\Lambda_R^\opt(\htheta_n)$ and $\Lambda_{n,P}^\opt(\htheta_n)$, respectively. However, it can be shown that
{\small\begin{equation*}
  \tr\{\Lambda_R^\opt(\htheta_n)\}
  < \tr\{\Lambda_R^{\alpha}(\htheta_n)\}
  < \frac{\tr\{\Lambda_R^\opt(\htheta_n)\}}{1-\alpha},
  \;\text{ and }\;\;
  \tr\{\Lambda_{n,P}^\opt(\htheta_n)\}
  <\tr\{\Lambda_{n,P}^{\alpha}(\htheta_n)\}
  < \frac{\tr\{\Lambda_{n,P}^\opt(\htheta_n)\}}{1-\alpha}.
\end{equation*}}
Thus, if $\alpha$ is small enough, $\tr\{\Lambda_R^{\alpha}(\htheta_n)\}$ and $\tr\{\Lambda_R^\opt(\htheta_n)\}$ can be arbitrarily close, and $\tr\{\Lambda_{n,P}^{\alpha}(\htheta_n)\}$ and $\tr\{\Lambda_{n,P}^\opt(\htheta_n)\}$ can be arbitrarily close.
\end{remark}

\begin{remark}\normalfont
    If the pilot subsample size is much smaller than the approximated optimal subsample size, i.e., $s_0=o(s_n)$, then the aggragated estimator $\ctheta_R$ and $\ctheta_P$ have the same asymptotic distributions as those for $\ttheta_{s_n,R}^{\alpha}$ and $\ttheta_{s_n,P}^{\alpha}$, respectively.
\end{remark}

\section{Numerical experiments}
\label{sec:numer-exper}
In this section, we use numerical examples to compare the optimal subsampling probabilities under the two sampling procedures considered in this paper. We will also use numerical experiments to evaluate the performance of the practical algorithms proposed in Section~\ref{sec:pract-impl}.
\subsection{Comparisons of optimal subsampling probabilities}
In this section, we use numerical examples to compare the optimal probabilities for subsampling with replacement presented in Theorem~\ref{thm:3} with the optimal Poisson subsampling probabilities presented in Theorem~\ref{thm:4}.

\begin{example}[{\bf Linear regression}]\normalfont
  Consider solving the OLS for a linear regression model $y_i=\theta_{0}+\x_i\tp\btheta_{1}+\varepsilon_i$, $i=1, ..., n$, with $n=10^5$, $\theta_0=1$, $\btheta_1$ being a 50 dimensional vector of ones, and $\varepsilon_i$ being i.i.d. $\Nor(0,1)$. For the expected subsample sizes, we consider $s_n=2\times10^3, 3\times10^3, 5\times10^3, 10^4, 2\times10^4,$ and $5\times10^4$, so that the subsampling ratios are $s_n/n=0.02, 0.03, 0.05, 0.1, 0.2,$ and $0.5$. In this example, we use the L-optimality criterion with $L=(\X\tp\X)^{1/2}$ so that the optimal subsampling probabilities are closely related to the statistical leverage scores. Specially, $\pi_{n,Ri}^{\mvc}\propto|\hat\varepsilon_i|\sqrt{h_i}$ and $\pi_{n,Pi}^{\mvc}\propto(|\hat\varepsilon_i|\sqrt{h_i})\wedge H$ for the two subsampling procedures, respectively. To generate $\x_i$'s, we used normal distribution $\Nor(\0, \bSigma)$ and multivariate $t$ distributions $t_{\nu}(\0, \bSigma)$ with degrees of freedom $v=5, 4, 3, 2$, and $1$, where $\bSigma$ is a matrix with the $(i,j)$-th element being $0.5^{I(i\neq j)}$ and $I()$ being the indicator function. For this sequence of covariate distributions, the statistical leverage scores become more and more nonuniform.

  Table~\ref{tab01} gives the values of $g$ in the expression of the optimal Poisson subsampling probabilities in Theorem~\ref{thm:4} for different combinations of the subsampling ratio $s_n/n$ and covariate distribution. Note that $g$ is the number of cases that $\|\dm(Z_i,\htheta_n)\|\propto|\hat\varepsilon_i|\sqrt{h_i}$ are truncated by $H$. Thus, $\pi_{n,Ri}^{\mvc}$ and $\pi_{n,Pi}^{\mvc}$ are more different for larger values of $g$, and they are identical if $g=0$. It is clear that $g$ increases as $s_n/n$ increases, indicating that $\pi_{n,Ri}^{\mvc}$ and $\pi_{n,Pi}^{\mvc}$ are more different as the subsampling ratio $s_n/n$ gets larger. We also see that as the tail of the covariate distribution get heavier, $g$ gets larger. This tells us that the difference between $\pi_{n,Ri}^{\mvc}$ and $\pi_{n,Pi}^{\mvc}$ is more significant if the statistical leverage scores are more nonuniform, as a heavier-tailed covariate distribution leads to more nonuniform leverage scores. 

\begin{table}[H]%
  \caption{The values of $g$ in the optimal Poisson subsampling probabilities for OLS with different expected subsample sizes $s_n$ and different distributions of $\x_i$'s. %
    The full data sample size is $n=10^5$.}
\label{tab01}
  \centering
\begin{tabular}{crrrrrr}\hline
  &\multicolumn{5}{c}{Distribution of $\x_i$'s} \\
  \cline{2-7}
$s_n/n$ & Normal & $t_5$ & $t_4$ & $t_3$ & $t_2$ & $t_1$ \\ \hline
0.02 & 0     & 0     & 0     & 0     & 16    & 120   \\ 
0.03 & 0     & 0     & 0     & 7     & 39    & 203   \\ 
0.05 & 0     & 0     & 1     & 28    & 113   & 342   \\ 
0.1  & 0     & 23    & 58    & 154   & 492   & 756   \\ 
0.2  & 15    & 584   & 762   & 1216  & 2242  & 1734  \\ 
0.5  & 14364 & 16569 & 17191 & 17954 & 19038 & 15481 \\ \hline
\end{tabular}
\end{table}
\end{example}

Figure~\ref{fig:a1} presents histograms and scatter plots of optimal probabilities for the two subsampling procedures to show more details on the distributions of $\pi_{n,Ri}^{\mvc}$'s and $\pi_{n,Pi}^{\mvc}$'s when $\x_i$'s are from the $t_3$ distribution. In each sub-figure, the left panel is the histogram for $\pi_{n,Pi}^{\mvc}$'s and the right panel is the scatter plot of $\pi_{n,Pi}^{\mvc}$'s against $\pi_{n,Ri}^{\mvc}$'s. We multiply all probabilities by $n$ for better presentations. Note that this does not change the shapes of the figures. We only create the histogram for $\pi_{n,Pi}^{\mvc}$'s, because the distribution of $\pi_{n,Ri}^{\mvc}$'s does not depend on $s_n$ and remains the same for all values of $s_n$. In addition, since $g=0$ for the case with $s_n/n=0.02$, the histogram in Figure~\ref{fig:a1}(a) is the same to the histogram for $\pi_{n,Ri}^{\mvc}$ and we can compare it with other histograms to see the difference between the distributions of $\pi_{n,Ri}^{\mvc}$'s and $\pi_{n,Pi}^{\mvc}$'s. From Figure~\ref{fig:a1} (a)-(f), we see that as $s_n/n$ increases the optimal probabilities for Poisson sampling and sampling with replacement are more different, because more larger $\pi_{n,Pi}^{\mvc}$'s are truncated to $1/s$.

\begin{figure}[H]
  \centering
  \begin{subfigure}{0.49\textwidth}
    \includegraphics[width=\textwidth,page=1]{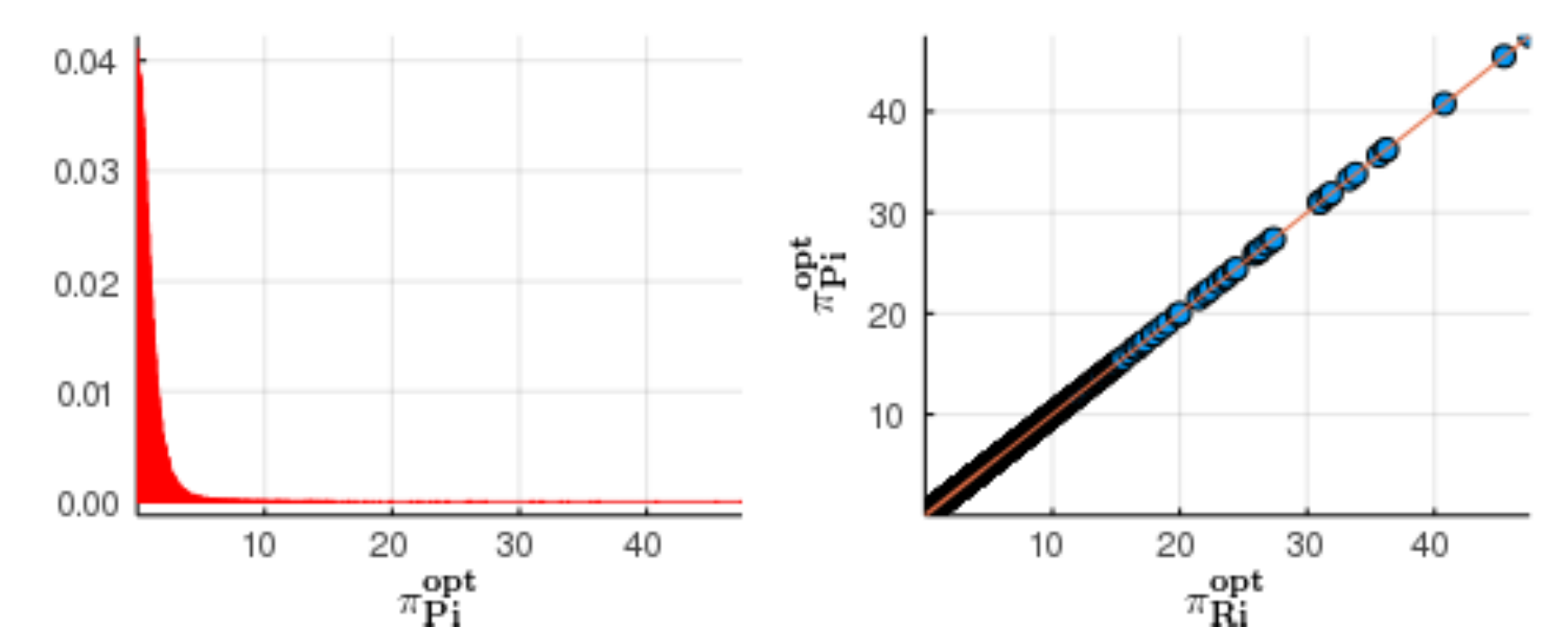}
    \caption{$s_n/n=0.02$}
  \end{subfigure}
  \begin{subfigure}{0.49\textwidth}
    \includegraphics[width=\textwidth,page=2]{0PIsChange4.pdf}
    \caption{$s_n/n=0.03$}
  \end{subfigure}
  \begin{subfigure}{0.49\textwidth}
    \includegraphics[width=\textwidth,page=3]{0PIsChange4.pdf}
    \caption{$s_n/n=0.05$}
  \end{subfigure}
  \begin{subfigure}{0.49\textwidth}
    \includegraphics[width=\textwidth,page=4]{0PIsChange4.pdf}
    \caption{$s_n/n=0.1$}
  \end{subfigure}
  \begin{subfigure}{0.49\textwidth}
    \includegraphics[width=\textwidth,page=5]{0PIsChange4.pdf}
    \caption{$s_n/n=0.2$}
  \end{subfigure}
  \begin{subfigure}{0.49\textwidth}
    \includegraphics[width=\textwidth,page=6]{0PIsChange4.pdf}
    \caption{$s_n/n=0.5$}
  \end{subfigure}
  \caption{Histograms and scatter plots of optimal probabilities for subsampling with replacement and Poisson subsampling for different subsampling ratio $s_n/n$. Here $\x_i$'s are from the $t_3$ distribution.}
  \label{fig:a1}
\end{figure}

Figure~\ref{fig:a2} presents histograms and scatter plots of optimal probabilities $\pi_{n,Ri}^{\mvc}$'s and $\pi_{n,Pi}^{\mvc}$'s for different distributions of $\x_i$'s when $s_n/n=0.1$. In each sub-figure, the upper and lower plots in the left panel are the histograms for $\pi_{n,Ri}^{\mvc}$'s and $\pi_{n,Pi}^{\mvc}$'s, respectively, and the right panel is the scatter plot of $\pi_{n,Pi}^{\mvc}$'s against $\pi_{n,Ri}^{\mvc}$'s. Again, we multiply all probabilities by $n$ for better presentations. We see that for a fixed subsampling ratio $s_n/n$, $\pi_{n,Ri}^{\mvc}$'s and $\pi_{n,Pi}^{\mvc}$'s become more different as the leverage scores become more nonuniform (the tail of the covariate distribution becomes heavier), because more large values of $\pi_{n,Pi}^{\mvc}$'s are truncated. 
  
\begin{figure}[H]
  \centering
  \begin{subfigure}{0.49\textwidth}
    \includegraphics[width=\textwidth,page=1]{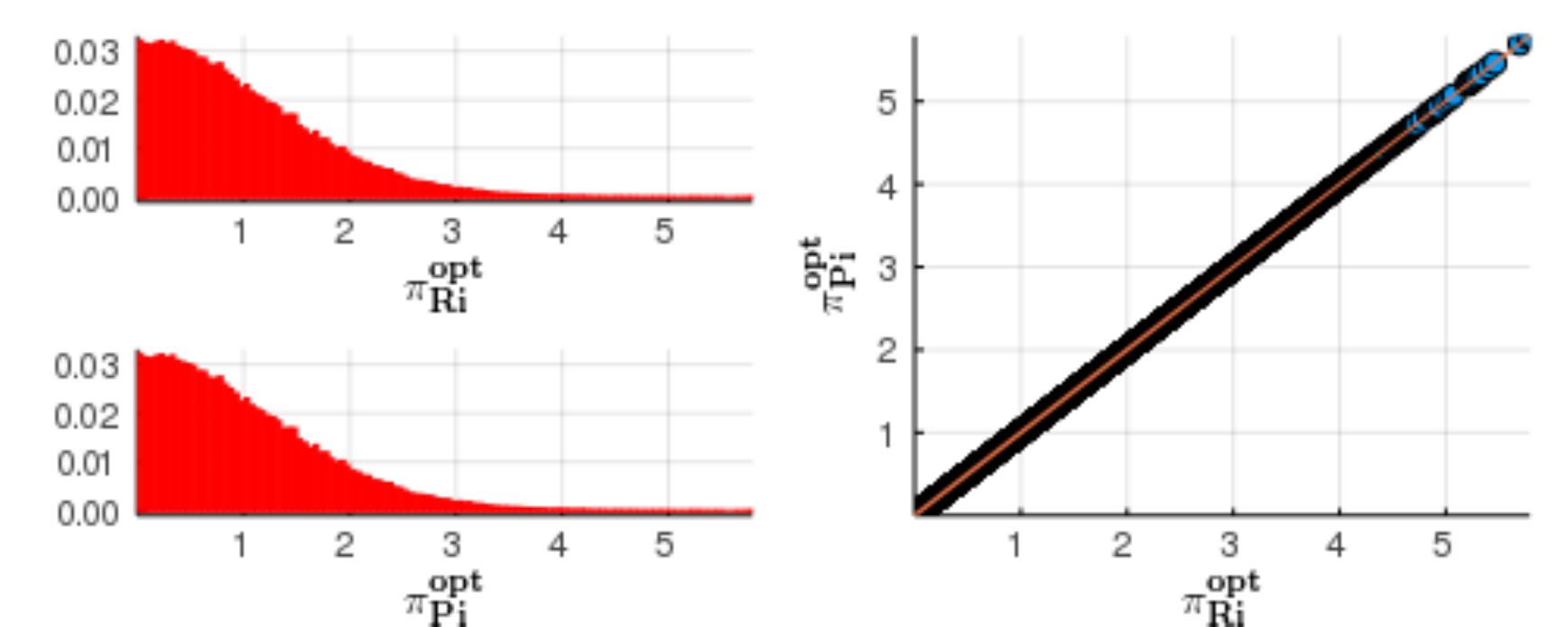}
    \caption{Normal}
  \end{subfigure}
  \begin{subfigure}{0.49\textwidth}
    \includegraphics[width=\textwidth,page=2]{0PIDiffCase-k10000.pdf}
    \caption{$t_5$}
  \end{subfigure}
  \begin{subfigure}{0.49\textwidth}
    \includegraphics[width=\textwidth,page=3]{0PIDiffCase-k10000.pdf}
    \caption{$t_4$}
  \end{subfigure}
  \begin{subfigure}{0.49\textwidth}
    \includegraphics[width=\textwidth,page=4]{0PIDiffCase-k10000.pdf}
    \caption{$t_3$}
  \end{subfigure}
  \begin{subfigure}{0.49\textwidth}
    \includegraphics[width=\textwidth,page=5]{0PIDiffCase-k10000.pdf}
    \caption{$t_2$}
  \end{subfigure}
  \begin{subfigure}{0.49\textwidth}
    \includegraphics[width=\textwidth,page=6]{0PIDiffCase-k10000.pdf}
    \caption{$t_1$}
  \end{subfigure}
  \caption{Histograms and scatter plots of optimal probabilities for subsampling with replacement and Poisson subsampling for different covariate distributions. Here the subsampling ratio $s_n/n=0.1$.}
  \label{fig:a2}
\end{figure}

\subsection{Comparison of estimation efficiency of the practical algorithms}
We compare the estimation efficiency for the two subsampling procedures using both synthetic and real data sets.
\begin{example}[{\bf Logistic regression}]\normalfont Form model $\Pr(y_i=1|\x_i)%
={e^{\theta_{0}+\x_i\tp\btheta_{1}}}/{(1+e^{\theta_{0}+\x_i\tp\btheta_{1}})}$, 
$i=1, ..., n$, we generate synthetic data sets by setting $n=10^5$, $\theta_0=0.5$, and $\btheta_1$ to be a 9 dimensional vector of 0.5. We consider the following three cases to generate $\x_i$. In Cases 1 and 3, the responses $y_i$ are balanced, while in Case 2 about 98\% of the data points are with $y_i=1$.
\begin{enumerate}[{Case }1:, itemindent=0.7cm, leftmargin=*, itemsep=-4pt]
\item \textbf{Normal}. Generate $\x_i$ from a multivariate normal distribution, $\Nor(\0, \bSigma)$, where the $(i,j)$-th element of $\bSigma$ is $\Sigma_{ij}=0.5^{I(i\neq j)}$ and $I()$ is the indicator function. This distribution is symmetric with light tails. %
\item \textbf{LogNormal}. %
  Generate $\bv_i$ from $\Nor(\0, \bSigma)$ as defined in Case 1 and then set $\x_i=e^{\bv_i}$, where the exponentiation is element-wise. This distribution is asymmetric and positively skewed.
\item $\bm{T}_3$. We generate $\x_i$ from a multivariate $t$ distribution with three degrees of freedom $t_3(\0, \bSigma)$ with $\bSigma$ defined in Case 1. This distribution is symmetric with heavy tails.
\end{enumerate}

We also consider two real data sets: the covtype data from the LIBSVM data website (\url{https://www.csie.ntu.edu.tw/~cjlin/libsvm/})
and the SUSY data \citep{Baldi2014}.
Both data sets are also available from the UCI data repository \citep{Dua:2017}. We present them as Cases 4 and 5 below.
\begin{enumerate}[{Case }1:, itemindent=0.7cm, leftmargin=*, itemsep=-4pt]
  \setcounter{enumi}{3}
\item \textbf{Covtype Data}. It has $n=581,012$ observations with about 48.76\% of the responses are $y_i=1$. We use the ten quantitative covariate variables as $\x_i$'s. %
  
\item \textbf{SUSY Data}. It has $n=5,000,000$ observations with about 54.24\% of the responses are $y_i=1$. We use the 18 kinematic features to classify whether new SUSY particles are produced. 
\end{enumerate}

To implement Algorithms~\ref{alg:3} and \ref{alg:4}, we set $\alpha=0.1$, and choose $s_0=0.01n$ and different values for $s_n$
so that the sampling ratio $(s_0+s_n)/n= 0.02$, $0.05$, $0.1$, $0.2$, and $0.5$. 
Two different options of $H^{0*}$ are considered: $H^{0*}=\|\dm(Z_i^{0*},\ttheta_{s_0,P}^{0*})\|_{\frac{s_n}{5n}}$ and $H^{0*}=\infty$. We aggregate the pilot estimator with the approximated optimal subsampling estimator using the procedure described in Remark~\ref{rmk:6}.
For comparison, we also implement the uniform subsampling method with expected subsample sizes $s_0+s_n$. Newton's method is used for optimization on all subsamples. 
We repeat the simulation for $T=1000$ times to calculate the empirical mean squared error (MSE), defined as MSE=$\frac{1}{T}\sum_{t=1}^T\|\ctheta^{(t)}-\htheta_n\|^2$, where $\ctheta^{(t)}$ is the subsampling estimate at the $t$-th repetition and $\htheta_n$ is the full data estimate.

Figure~\ref{fig:1} plots the empirical MSE (natural logarithm is taken for better presentation) against the subsampling ratio $(s_0+s_n)/n$. %
When the subsampling ratio $(s_0+s_n)/n$ is close to zero, subsampling with replacement and Poisson subsampling have similar performance for both approximated optimal subsampling and uniform subsampling.
However, when $(s_0+s_n)/n$ gets larger, Poisson subsampling outperforms subsampling with replacement, and the improvement from subsampling with replacement to Poisson subsampling is more significant for approximated optimal subsampling than for uniform subsampling. For both subsampling with replacement and Poisson subsampling, approximated optimal subsampling methods outperform the uniform subsampling method. Their performances are closer for smaller $(s_0+s_n)/n$ because the proportions of uniform subsamples are higher for smaller $(s_0+s_n)/n$. For Poisson subsampling, the results for the two choices of $H^{0*}$, $H^{0*}=\infty$ and $H^{0*}=\|\dm(Z_i^{0*},\ttheta_{s_0,P}^{0*})\|_{\frac{s_n}{5n}}$,
are similar when $(s_0+s_n)/n$ is small, but they start to differ for larger $(s_0+s_n)/n$.

\begin{figure}[H]
  \centering
  \begin{subfigure}{0.32\textwidth}
    \includegraphics[width=\textwidth,page=1]{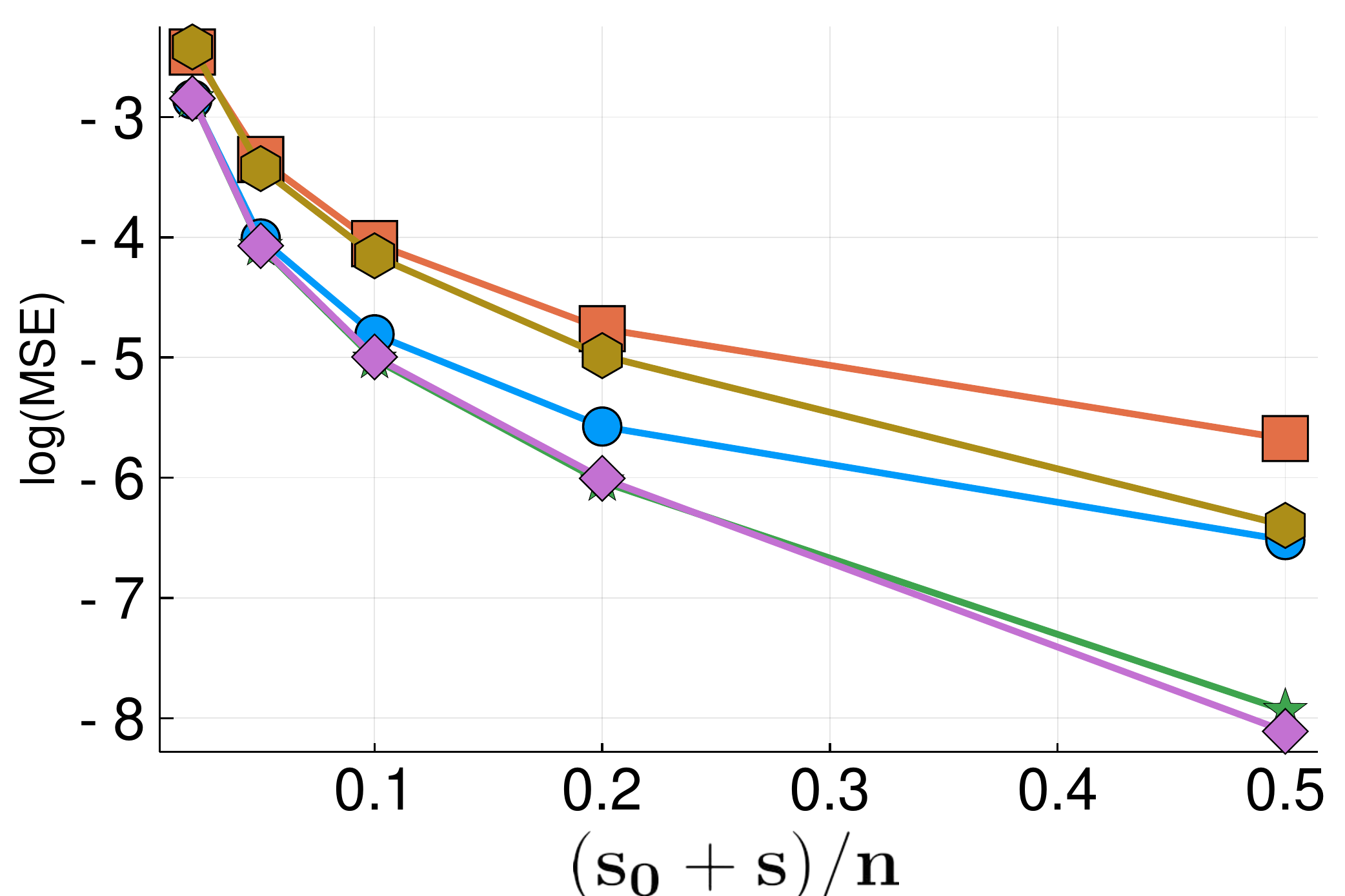}
    \caption{Case 1: \textbf{Normal}}
  \end{subfigure}
  \begin{subfigure}{0.32\textwidth}
    \includegraphics[width=\textwidth,page=2]{mse_logistic.pdf}
    \caption{Case 2: \textbf{LogNormal}}
  \end{subfigure}
  \begin{subfigure}{0.32\textwidth}
    \includegraphics[width=\textwidth,page=3]{mse_logistic.pdf}
    \caption{Case 3: $\bm{T}_3$}
  \end{subfigure}
  \begin{subfigure}{0.32\textwidth}
    \includegraphics[width=\textwidth,page=4]{mse_logistic.pdf}
    \caption{Case 4: \textbf{Covtype Data}}
  \end{subfigure}
  \begin{subfigure}{0.32\textwidth}
    \includegraphics[width=\textwidth,page=5]{mse_logistic.pdf}
    \caption{Case 5: \textbf{SUSY Data}}
  \end{subfigure}
  \begin{subfigure}{0.32\textwidth}
    \includegraphics[width=\textwidth,page=6]{mse_logistic.pdf}
    \caption*{labels}
  \end{subfigure}
\caption{Log empirical MSEs (y-axis) against subsampling ratio $(s_0+s_n)/n$ (x-axis) for logistic regression. Here, ``optR'' means optimal subsampling with replacement; ``uniR'' means uniform subsampling with replacement; ``optP$_\infty$'' means approximated optimal Poisson subsampling with $H^{0*}=\infty$; ``optP$_{b=5}$'' means approximated optimal Poisson subsampling with $H^{0*}=\|\dm(Z_i^{0*},\ttheta_{s_0,P}^{0*})\|_{\frac{s_n}{5n}}$; and``uniP'' means uniform Poisson subsampling.}
  \label{fig:1}
\end{figure}

\end{example}

\begin{example}[{\bf Linear regression}]\normalfont
We consider a linear model $y_i=\theta_{0}+\x_i\tp\btheta_{1}+\varepsilon_i$, $i=1, ..., n$, with $n=10^5$, $\theta_0=1$, $\btheta_1$ being a 50 dimensional vector of ones, and $\varepsilon_i$ being i.i.d. $\Nor(0,1)$. We use the same distributions in Cases 1-3 to generate $\x_i$ and refer them as Cases 1'-3'. 
We also consider a gas sensor data \cite{fonollosa2015reservoir} from 
the UCI data repository \citep{Dua:2017}.
We present it as Case 6 below.
\begin{enumerate}[{Case }1:, itemindent=0.7cm, leftmargin=*]
  \setcounter{enumi}{5}
\item \textbf{Gas Sensor Data}. After cleaning, the data contain $n=4,188,261$ readings on 15 sensors. We use log of readings from the last sensor as responses and log of other readings as covariates.
\end{enumerate}

To implement Algorithms~\ref{alg:3} and \ref{alg:4}, we use the same setup for $\alpha$, $s_0$, $s_n$, and $H^{0*}$, as used in logistic regression. Specifically, $\alpha=0.1$, $s_0=0.01n$ and different values for $s_n$ so that $(s_0+s_n)/n= 0.02$, $0.05$, $0.1$, $0.2$, and $0.5$. 
  We also consider both $H^{0*}=\|\dm(Z_i^{0*},\ttheta_{s_0,P}^{0*})\|_{\frac{s_n}{5n}}$ and $H^{0*}=\infty$, and aggregate the pilot estimator with the approximated optimal subsampling estimator using the procedure described in Remark~\ref{rmk:6}.
We repeat the simulation for $T=1000$ times to calculate the empirical MSE.  %

Figure~\ref{fig:3} gives results for empirical MSE from least-squares in linear regression model. %
The overall pattern in Figure~\ref{fig:3} is similar to that in Figure~\ref{fig:1}. We see that subsampling with replacement and Poisson subsampling have similar performance if the subsampling ratio $(s_0+s_n)/n$ is close to zero, while Poisson subsampling outperforms subsampling with replacement as $(s_0+s_n)/n$ gets larger. This trend is true for both approximated optimal subsampling and uniform subsampling, and we observe that the advantage of Poisson subsmapling over subsampling with replacement is more significant for  approximated optimal subsampling. Furthermore, for linear regression, the advantage of Poisson subsampling compared with subsampling with replacement is more significant. For example, in Case 4', the synthetic data sets with $\x_i$'s from the $t_3$ distribution, the uniform Poisson subsampling can even outperform the approximated optimal subsampling with replacement when $(s_0+s_n)/n=0.5$. 
We also observe that approximated optimal subsampling methods outperform the uniform subsampling methods, and the gap between their performance in terms of estimation efficiency is larger for larger $(s_0+s_n)/n$. This is because the proportions of more informative  observations in the subsample are higher for larger $(s_0+s_n)/n$. Another pattern is that when the approximated optimal subsampling probabilities are more nonuniform, their advantage over uniform subsampling is more significant. For example, from the gas sensor data set, approximated optimal subsampling methods have significantly higher estimation efficiency than the uniform subsampling methods even when $s_0=s_n=1000$. 
For Poisson subsampling, the performance with $H^{0*}=\infty$  and that with $H^{0*}=\|\dm(Z_i^{0*},\ttheta_{s_0,P}^{0*})\|_{\frac{s_n}{5n}}$
are similar for small $(s_0+s_n)/n$, but the choice with $H^{0*}=\|\dm(Z_i^{0*},\ttheta_{s_0,P}^{0*})\|_{\frac{s_n}{5n}}$ starts to show its advantage for larger $(s_0+s_n)/n$.

\begin{figure}[ht]
  \centering
  \begin{subfigure}{0.32\textwidth}
    \includegraphics[width=\textwidth,page=1]{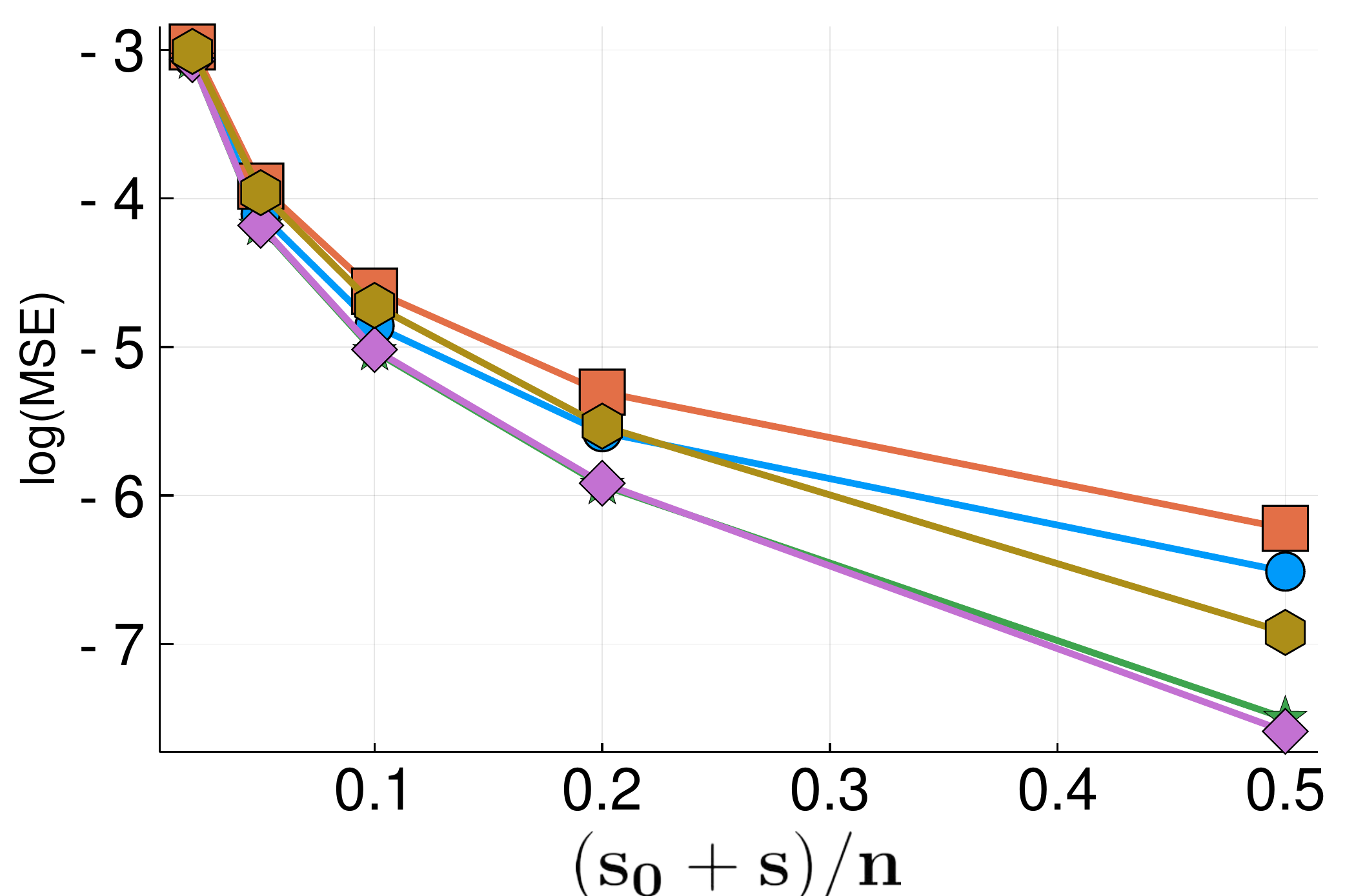}
    \caption{Case 1': \textbf{Normal}}
  \end{subfigure}
  \begin{subfigure}{0.32\textwidth}
    \includegraphics[width=\textwidth,page=2]{mse_linear.pdf}
    \caption{Case 2': \textbf{LogNormal}}
  \end{subfigure}
  \begin{subfigure}{0.32\textwidth}
    \includegraphics[width=\textwidth,page=4]{mse_linear.pdf}
    \caption{Case 3': $\bm{T}_3$}
  \end{subfigure}
  \begin{subfigure}{0.32\textwidth}
    \includegraphics[width=\textwidth,page=5]{mse_linear.pdf}
    \caption{Case 6: \textbf{Gas Sensor}}
  \end{subfigure}
  \begin{subfigure}{0.32\textwidth}
    \includegraphics[width=\textwidth,page=6]{mse_linear.pdf}
    \caption*{labels}
  \end{subfigure}
  \caption{Log Empirical MSEs (y-axis) against subsampling ratio $(s_0+s_n)/n$ (x-axis) for linear regression. Here, ``optR'' means optimal subsampling with replacement; ``uniR'' means uniform subsampling with replacement; ``optP$_\infty$'' means approximated optimal Poisson subsampling with $H^{0*}=\infty$; ``optP$_{b=5}$'' means approximated optimal Poisson subsampling with $H^{0*}=\|\dm(Z_i^{0*},\ttheta_{s_0,P}^{0*})\|_{\frac{s_n}{5n}}$; and``uniP'' means uniform Poisson subsampling.}
  \label{fig:3}
\end{figure}
\end{example}

\section{Conclusion and Discussion}
In this paper, we derived optimal subsampling probabilities in the context of maximizing an additive target function for both subsampling with replacement and Poisson subsampling. Theoretical and empirical results show that the two different subsampling procedure have similar performance when the subsampling ratio is small. However, when subsampling ratio does not converge to zero, Poisson subsampling has a higher estimation efficiency. 
One problem warrants for further investigation is how to chose the tuning parameter $b$ in Algorithm~\ref{alg:4} so that the approximated optimal subsampling probabilities produce an estimator with an asymptotic variance-covariance matrix that is near optimal even when the subsampling ratio does not converge to zero.

\section*{Acknowledgments}
The authors are deeply grateful to Professor Michael Mahoney, the Associate Editor Professor Stephane Boucheron, and two anonymous reviewers for their insightful comments, question, and suggestions that significantly improved the manuscript.
  This work was partially supported by the NSF grant CCF-2105571.  %

An early short version of the paper is presented in the AISTATS 2021 conference \citep{pmlr-v130-wang21a}. This version contains substantially more technical results such as the relationships between conditional and unconditional convergences, and the unconditional asymptotic distributions about the true parameter. It also contains additional examples and numerical comparison results.      
 \black

\newpage\appendix
\begin{center} \LARGE Appendix\\[1cm] \end{center}
\setcounter{equation}{0}
\renewcommand{\theequation}{A.\arabic{equation}}
\renewcommand{\thesection}{A.\arabic{section}}
\renewcommand{\thefigure}{A.\arabic{figure}}

\section{Proofs}
\addcontentsline{toc}{section}{Appendix: Proofs}
\label{appendix}

In this section, we prove all the theoretical results in the paper.

\subsection{Proof of Proposition~\ref{prop2}}
\begin{proof}
For Proposition~\ref{prop2}~\ref{item:1} since $\Pr(\|\Delta_{n,s_n}\|>\delta|\Dn)$ is a nonnegative and bounded random variable, from Theorem 1.3.6 of \cite{Serfling1980},  $\Pr(\|\Delta_{n,s_n}\|>\delta|\Dn)=\op$ if and only if $\Exp\{\Pr(\|\Delta_{n,s_n}\|>\delta|\Dn)\}\rightarrow0$. Note that 
  \begin{equation*}
    \Exp\{\Pr(\|\Delta_{n,s_n}\|>\delta|\Dn)\}
    =\Exp[\Exp\{I(\|\Delta_{n,s_n}\|>\delta)|\Dn\}]
    =\Exp\{I(\|\Delta_{n,s_n}\|>\delta)\}
    =\Pr(\|\Delta_{n,s_n}\|>\delta).
  \end{equation*}
  Thus $\Exp\{\Pr(\|\Delta_{n,s_n}\|>\delta|\Dn)\}\rightarrow0$ if and only if $\Pr(\|\Delta_{n,s_n}\|>\delta)\rightarrow0$, which is true if and only if $\Delta_{n,s_n}=\op$. 
  
Now we prove Proposition~\ref{prop2}~\ref{item:2}. Note that $\Delta_{n,s_n}=\OpD(1)$ means that for any $\epsilon>0$ and any $\delta>0$, there exist a finite $K_{\epsilon}>0$ and a finite $N_{\epsilon,\delta}>0$ such that $\Pr\{\Pr(\|\Delta_{n,s_n}\|>K_{\epsilon}|\Dn) > \epsilon\} < \delta$ for all $s_n>N_{\epsilon,\delta}$ and $n>N_{\epsilon,\delta}$.
Thus if $\Delta_{n,s_n}=\OpD(1)$, then for any $\epsilon>0$ and $\delta=\epsilon$, there exist a finite $K_{0.5\epsilon}>0$ and a finite $N_{0.5\epsilon,0.5\epsilon}>0$ such that for all $s_n>N_{0.5\epsilon,0.5\epsilon}$ and $n>N_{0.5\epsilon,0.5\epsilon}$,
\begin{equation*}
  \Pr\{\Pr(\|\Delta_{n,s_n}\|>K_{0.5\epsilon}|\Dn) > 0.5\epsilon\} < 0.5\epsilon.
\end{equation*}
Therefore,
\begin{align*}
  \Pr(\|\Delta_{n,s_n}\|>K_{0.5\epsilon})
  &=\Exp\big\{\Pr(\|\Delta_{n,s_n}\|>K_{0.5\epsilon}|\Dn)\big\}\\
  &\le\Exp\big[\Pr(\|\Delta_{n,s_n}\|>K_{0.5\epsilon}|\Dn)
    I\{\Pr(\|\Delta_{n,s_n}\|>K_{0.5\epsilon}|\Dn)>0.5\epsilon\}\big]+0.5\epsilon\\
  &\le\Exp\big[I\{\Pr(\|\Delta_{n,s_n}\|>K_{0.5\epsilon}|\Dn)>0.5\epsilon\}\big]
    +0.5\epsilon\\
  &\le\Pr\{\Pr(\|\Delta_{n,s_n}\|>K_{0.5\epsilon}|\Dn)>0.5\epsilon\}+0.5\epsilon\\
  &\le\epsilon,
\end{align*}
meaning that $\Delta_{n,s_n}=\Op$. 

On the other hand, if $\Delta_{n,s_n}=\Op$, then for any $\epsilon>0$ and $\delta>0$, there exist a finite $K_{\delta\epsilon}$ and a finite $N_{\delta\epsilon}$ such that
$\Pr(\|\Delta_{n,s_n}\|>K_{\delta\epsilon}) \le \delta\epsilon$ for all $s_n>N_{\delta\epsilon}$ and $n>N_{\delta\epsilon}$. Thus
\begin{equation*}
  \Pr\{\Pr(\|\Delta_{n,s_n}\|>K_{\delta\epsilon}|\Dn) > \epsilon\}
  \le\epsilon^{-1}\Exp\big\{\Pr(\|\Delta_{n,s_n}\|>K_{\delta\epsilon}|\Dn)\big\}
  =\epsilon^{-1}\Pr(\|\Delta_{n,s_n}\|>K_{\delta\epsilon}) < \delta,
\end{equation*}
which means that $\Delta_{n,s_n}=\OpD(1)$. 

Now we prove Proposition~\ref{prop2}~\ref{item:3}. Because $\Pr(\Delta_{n,s_n}\le\x|\Dn)$ is bounded, $\Pr(\Delta_{n,s_n}\le\x|\Dn)-\Pr(U\le\x)=\op$ if and only if  $\Exp\{\Pr(\Delta_{n,s_n}\le\x|\Dn)\}-\Pr(U\le\x)=\Pr(\Delta_{n,s_n}\le\x)-\Pr(U\le\x)=o(1)$.  Thus $\Delta_{n,s_n}\cvdD U$ if and only if $\Delta_{n,s_n}\cvd U$. 
\end{proof}
\black

\subsection{Proof for Theorem~\ref{thm:1}}
\addcontentsline{toc}{subsection}{Appendix A: Proof for Theorem~\ref{thm:1}}
Recall that
\begin{align*}
M_n(\btheta)&=\onen\sumn m(Z_i,\btheta).
\end{align*}
For the sampling with replacement estimator in~(\ref{eq:28}), %
let
\begin{align*}
M_{s_n}^*(\btheta)&=\oner\sumrr\frac{m(Z_i^*,\btheta)}{n\pi_{n,i}^*}.
\end{align*}
To prove Theorem~\ref{thm:1}, we first establish Lemma~\ref{lem:BrtoddM} and Lemma~\ref{lem2:DistributionofdM} in the following. 
\begin{lemma}\label{lem:BrtoddM}
Under Assumptions \ref{ass:3} and \ref{ass:6}, if $\|\ttheta_{s_n,R}-\htheta_n\|=\op$, then conditional on $\Dn$,
\begin{align*}
B_{s_n}- \ddM_n(\htheta_n)=\op,
\end{align*}
where
\begin{align*}
&\ddM_n(\htheta_n)=\onen\sumn\ddm(Z_i,\htheta_n),\\
&B_{s_n}=\int_{0}^{1}\oner \sumrr\frac{\ddm\{Z_i^*,\htheta_n+\lambda(\ttheta_{s_n,R}-\htheta_n) \}}{n\pi_{n,i}^*}\,\ud\lambda.%
\end{align*}
\end{lemma}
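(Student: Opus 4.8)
The plan is to prove the conditional statement in the sense defined before Proposition~\ref{prop2}, i.e.\ to show $B_{s_n}-\ddM_n(\htheta_n)=\opD(1)$ (which by Proposition~\ref{prop2}~\ref{item:1} is equivalent to the unconditional $\op$ written in the lemma). The device is to interpose the inverse-probability-weighted subsample Hessian evaluated at the \emph{fixed} point $\htheta_n$,
\[
  A_{s_n}=\oner\sumrr\frac{\ddm(Z_i^*,\htheta_n)}{n\pi_{n,i}^*},
\]
and to split
\[
  B_{s_n}-\ddM_n(\htheta_n)
  =\{B_{s_n}-A_{s_n}\}+\{A_{s_n}-\ddM_n(\htheta_n)\}.
\]
I would then show that each bracket is $\opD(1)$, the first exploiting the Lipschitz smoothness of $\ddm$ together with the consistency hypothesis, and the second being a conditional law of large numbers.

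For the term $A_{s_n}-\ddM_n(\htheta_n)$, the key point is that, conditional on $\Dn$, the summands of $A_{s_n}$ are i.i.d.\ with conditional mean $\sumjn\pi_{n,j}\ddm(Z_j,\htheta_n)/(n\pi_{n,j})=\ddM_n(\htheta_n)$, so $A_{s_n}$ is conditionally unbiased for the target. I would control its fluctuation by a conditional second-moment (Chebyshev) argument: the conditional variance of $A_{s_n}$ is $\oner$ times a per-term second moment $\onen\sumjn\|\ddm(Z_j,\htheta_n)\|^2/(n\pi_{n,j})$, which is bounded by $\max_j(n\pi_{n,j})^{-1}\cdot\onen\sumjn\|\ddm(Z_j,\htheta_n)\|^2$. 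The first factor is $\Op$ by Assumption~\ref{ass:6} and the second is $\Op$ because $\Exp\{\ddm_{k,l}^2(Z,\btheta)\}<\infty$ under Assumption~\ref{ass:3}. Dividing by $s_n\to\infty$ makes the conditional variance $\op$ as a random variable in $\Dn$, and conditional Markov then yields $\Pr(\|A_{s_n}-\ddM_n(\htheta_n)\|>\delta\mid\Dn)=\op$, i.e.\ $A_{s_n}-\ddM_n(\htheta_n)=\opD(1)$.

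For the term $B_{s_n}-A_{s_n}$, I would invoke the (global) Lipschitz continuity in Assumption~\ref{ass:3}: for each $\lambda\in[0,1]$ the entrywise difference $|\ddm_{k,l}\{Z_i^*,\htheta_n+\lambda(\ttheta_{s_n,R}-\htheta_n)\}-\ddm_{k,l}(Z_i^*,\htheta_n)|$ is at most $\psi(Z_i^*)\lambda\|\ttheta_{s_n,R}-\htheta_n\|$. Integrating over $\lambda$ (the $\int_0^1\lambda\,\ud\lambda=\tfrac12$ factor) and carrying the inverse-probability weights yields a bound of the form $\tfrac12\|\ttheta_{s_n,R}-\htheta_n\|\cdot\oner\sumrr\psi(Z_i^*)/(n\pi_{n,i}^*)$ for each entry. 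The weighted average $\oner\sumrr\psi(Z_i^*)/(n\pi_{n,i}^*)$ has conditional mean $\onen\sumjn\psi(Z_j)=\Op$ (since $\Exp\{\psi^2(Z)\}<\infty$ gives $\Exp\{\psi(Z)\}<\infty$), hence is $\OpD(1)$; multiplying by the hypothesis $\|\ttheta_{s_n,R}-\htheta_n\|=\op=\opD(1)$ gives $B_{s_n}-A_{s_n}=\opD(1)$. Adding the two brackets finishes the proof.

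The step I expect to be the main obstacle is the careful bookkeeping needed to turn the moment bounds into genuine $\opD(1)$ statements when the $\pi_{n,i}$ are random and data-dependent: one must justify pulling the full-data $\Op$ quantities (from Assumptions~\ref{ass:3} and~\ref{ass:6}) outside the conditional expectation and then combining them with the conditional Chebyshev bound, repeatedly using the equivalences of Proposition~\ref{prop2} to move between conditional and unconditional modes of convergence. In particular, the $\max_j(n\pi_{n,j})^{-1}=\Op$ factor must be verified to control the heaviest inverse-probability weights uniformly so that the per-term second moment and the $\psi$-average remain bounded in probability.
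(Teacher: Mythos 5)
Your proposal is correct and follows essentially the same route as the paper's proof: the same decomposition into the fixed-point term $A_{s_n}-\ddM_n(\htheta_n)$ (handled entrywise by conditional mean/variance plus Chebyshev, with Assumption~\ref{ass:6} controlling the inverse weights) and the Lipschitz term $B_{s_n}-A_{s_n}$ (bounded by $\|\ttheta_{s_n,R}-\htheta_n\|$ times the inverse-probability-weighted $\psi$-average, which is conditionally bounded in probability). The one detail you elide --- that $\onen\sumn\ddm_{k,l}^2(Z_i,\htheta_n)=\Op$ even though Assumption~\ref{ass:3} gives second moments only at a fixed $\btheta$ --- is supplied in the paper by one further use of the Lipschitz condition, $\ddm_{k,l}^2(Z_i,\htheta_n)\le 2\ddm_{k,l}^2(Z_i,\btheta)+2\psi^2(Z_i)\|\htheta_n-\btheta\|^2$, so your plan goes through unchanged.
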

In Lemma~\ref{lem:BrtoddM}, the notation $o_P(1)$ means convergence to 0 in probability. Here the probability is conditional probability. From \cite{xiong2008some,cheng2010bootstrap}, a sequence converges to 0 in conditional probability is equivalent to the fact that it converges to 0 in unconditional probability. Thus we use $o_P(1)$ to indicate convergence to 0 either in unconditional or conditional probability.

\begin{proof}
Firstly, note that
\begin{align*}
  \Exp\left(\oner\sumrr\frac{\psi(Z_i^*)}{n\pi_{n,i}^*}\Big|\Dn\right)
  &=\onen\sumn\psi(Z_i)=\Exp\{\psi(Z)\}+\op, \quad\text{ and}\\
  \Var\left(\oner\sumrr\frac{\psi(Z_i^*)}{n\pi_{n,i}^*}\Big|\Dn\right)
  &=\oner\sumn\frac{\psi^2(Z_i)}{n^2\pi_{n,i}}
    \leq\max_{i=1,...,n}\left(\frac{1}{n\pi_{n,i}}\right)\frac{1}{s_nn}\sumn \psi^2(Z_i)
  =O_P(s_n^{-1}).
\end{align*}
Thus,
\begin{equation*}
  \oner\sumrr\frac{\psi(Z_i^*)}{n\pi_{n,i}^*}=O_{P|\Dn}(1).
\end{equation*}
For every $k,l=1,2,...,d$, from Lipschitz continuity, for $\lambda\in(0,1)$, we have
\begin{align}
&\left|\oner\sumrr\frac{\ddm_{k,l}\{Z_i^*,\htheta_n+\lambda(\ttheta_{s_n,R}-\htheta_n)\}}{n\pi_{n,i}^*}-
\oner\sumrr\frac{\ddm_{k,l}(Z_i^*,\htheta_n)}{n\pi_{n,i}^*} \right|\notag\\
&=\lambda\|\ttheta_{s_n,R}-\htheta_n\|\oner\sumrr\frac{\psi(Z_i^*)}{n\pi_{n,i}^*}
=o_P(1),\label{eq:29}
\end{align}
and for any fixed $\btheta$, we have
\begin{align}
  &\onen\sumn\ddm_{k,l}^2(Z_i,\htheta_n)\leq \frac{2}{n}\sumn\ddm_{k,l}^2(Z_i,\btheta)+\frac{2\|\htheta_n-\btheta\|^2}{n}\sumn\psi^2(Z_i)
    =O_P(1).
\label{eq:ddmOP1}
\end{align}
In addition, according to \eqref{eq:ddmOP1},
\begin{align*}
\Exp\left\{\oner\sumrr\frac{\ddm_{k,l}(Z_i^*,\htheta_n)}{n\pi_{n,i}^*}\Big|\Dn\right\}&=\onen\sumn\ddm_{k,l}(Z_i,\htheta_n),\\
\Var\left\{\oner\sumrr\frac{\ddm_{k,l}(Z_i^*,\htheta_n)}{n\pi_{n,i}^*}\Big|\Dn\right\}&\le\frac{1}{s_n}\sumn\frac{\ddm_{k,l}^2(Z_i,\htheta_n)}{n^2\pi_{n,i}}\\
&\leq\max_{i}\left(\frac{1}{n\pi_{n,i}}\right)\frac{1}{s_nn}\sumn \ddm_{k,l}^2(Z_i,\htheta_n)
=O_P(s_n^{-1}).
\end{align*}
Thus, by Chebyshev's inequality, we have
\begin{align}\label{eq:ddm}
\left|\oner\sumrr\frac{\ddm_{k,l}(Z_i^*,\htheta_n)}{n\pi_{n,i}^*}-\onen\sumn\ddm_{k,l}(Z_i,\htheta_n)\right|
&=O_{P|\Dn}(s_n^{-1/2})=\opD(1).
\end{align}

Combining \eqref{eq:29} and \eqref{eq:ddm}, we have
\begin{align*}
&\left|B_{s_n,k,l}-\onen\sumn\ddm_{k,l}(Z_i,\htheta_n)\right|\\
&\leq \int_{0}^{1}\left|\oner \sumrr\frac{\ddm_{k,l}\{Z_i^*,\htheta_n+\lambda(\ttheta_{s_n,R}-\htheta_n) \}}{n\pi_{n,i}^*}-\onen\sumn\ddm_{k,l}(Z_i,\htheta_n)\right|\,\ud\lambda\\
&\leq \int_{0}^{1}
\Bigg[\left|\oner\sumrr\frac{\ddm_{k,l}\{Z_i^*,\htheta_n+\lambda(\ttheta_{s_n,R}-\htheta_n)\}}{n\pi_{n,i}^*}-
\oner\sumrr\frac{\ddm_{k,l}(Z_i^*,\htheta_n)}{n\pi_{n,i}^*} \right|\Bigg]\,\ud\lambda\\
&\quad+\left|\oner\sumrr\frac{\ddm_{k,l}(Z_i^*,\htheta_n)}{n\pi_{n,i}^*}-\onen\sumn\ddm_{k,l}(Z_i,\htheta_n)\right|\\
&=\opD(1).
\end{align*}
\end{proof}

\begin{lemma}\label{lem2:DistributionofdM}
Under Assumptions \ref{ass:4}-\ref{ass:6}, given $\Dn$ in probability,
\begin{equation}\label{DistributionofdM}
\sqrt{s_n}\{ \Lambda_{n,R}(\htheta_n)\}^{-1/2}\dM^*_{s_n}(\htheta_n)\cvdD \Nor\left(\0,\I\right),
\end{equation}
in conditional distribution.
\end{lemma}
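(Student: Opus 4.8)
The plan is to exhibit $\dM_{s_n}^{*}(\htheta_n)$ as a normalized sum of conditionally i.i.d.\ terms and then invoke a conditional central limit theorem. Writing $\xi_{n,i}=\dm(Z_i^*,\htheta_n)/(n\pi_{n,i}^*)$, we have $\dM_{s_n}^{*}(\htheta_n)=\oner\sumrr\xi_{n,i}$, and conditional on $\Dn$ the resampled points are i.i.d.\ draws placing mass $\pi_{n,j}$ on $Z_j$, so $\xi_{n,1},\dots,\xi_{n,s_n}$ are i.i.d.\ given $\Dn$. First I would compute the conditional moments. Because $\htheta_n$ maximizes $\M_n$, the first-order condition gives $\onen\sumn\dm(Z_i,\htheta_n)=\0$, whence $\Exp(\xi_{n,i}\mid\Dn)=\onen\sumn\dm(Z_i,\htheta_n)=\0$, and a direct evaluation gives $\Exp(\xi_{n,i}\xi_{n,i}\tp\mid\Dn)=\Lambda_{n,R}(\htheta_n)$. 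Hence the target statistic $T_n=\sqrt{s_n}\{\Lambda_{n,R}(\htheta_n)\}^{-1/2}\dM_{s_n}^{*}(\htheta_n)=s_n^{-1/2}\sumrr\zeta_{n,i}$, with $\zeta_{n,i}=\{\Lambda_{n,R}(\htheta_n)\}^{-1/2}\xi_{n,i}$, has conditional mean $\0$ and conditional covariance exactly $\I_d$ for every $n$; the centering is automatic.

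Next I would reduce to a scalar statement by the Cram\'er--Wold device and apply the Lindeberg--Feller theorem for triangular arrays, conditionally on $\Dn$. For a fixed $\bm{t}$, the sum $\bm{t}\tp T_n=s_n^{-1/2}\sumrr\bm{t}\tp\zeta_{n,i}$ has conditional variance exactly $\|\bm{t}\|^2$, so the only remaining ingredient is the Lindeberg condition, which must hold in probability: for every $\epsilon>0$,
\[
\Exp\big[(\bm{t}\tp\zeta_{n,1})^2\,I\{|\bm{t}\tp\zeta_{n,1}|>\epsilon\sqrt{s_n}\}\,\big|\,\Dn\big]=\op.
\]
Bounding the indicator by $(\bm{t}\tp\zeta_{n,1})^2/(\epsilon^2 s_n)$ reduces this to $s_n^{-1}\Exp\{\|\zeta_{n,1}\|^4\mid\Dn\}=\op$, and since $s_n\to\infty$ it suffices to show that the conditional fourth moment is $\Op$.

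This is where the regularity assumptions enter. A direct calculation gives
\[
\Exp\{\|\xi_{n,1}\|^4\mid\Dn\}=\onen\sumn\frac{\|\dm(Z_i,\htheta_n)\|^4}{(n\pi_{n,i})^3}\le\Big\{\max_{i}(n\pi_{n,i})^{-3}\Big\}\onen\sumn\|\dm(Z_i,\htheta_n)\|^4=\Op,
\]
by Assumption~\ref{ass:6} for the first factor and the fourth-moment part of Assumption~\ref{ass:4} for the second. To replace $\xi_{n,1}$ by $\zeta_{n,1}$ I need $\{\Lambda_{n,R}(\htheta_n)\}^{-1}$ bounded in probability, i.e.\ the smallest eigenvalue of $\Lambda_{n,R}(\htheta_n)$ bounded away from zero. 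I would obtain this from the positive-definiteness of $\Lambda(\btheta_0)$ in Assumption~\ref{ass:4} together with $\htheta_n\to\btheta_0$, the Cauchy--Schwarz lower bound $\bm{u}\tp\Lambda_{n,R}(\htheta_n)\bm{u}\ge\{\onen\sumn|\bm{u}\tp\dm(Z_i,\htheta_n)|\}^2$ (which uses $\sumn n\pi_{n,i}=n$), and an $L_1$--$L_2$--$L_4$ interpolation inequality lower bounding $\Exp|\bm{u}\tp\dm(Z,\btheta_0)|$ by a positive constant uniformly in unit vectors $\bm{u}$. Combining the two bounds gives $\Exp\{\|\zeta_{n,1}\|^4\mid\Dn\}=\Op$, so the Lindeberg sum is $\Op(s_n^{-1})=\op$.

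I expect the main obstacle to be realizing the Lindeberg--Feller conclusion rigorously in the $\cvdD$ sense, since the conditional law must converge in probability over the randomness of $\Dn$ while both the summands and the normalizer $\Lambda_{n,R}(\htheta_n)$ are data dependent. The cleanest route is via the conditional characteristic function: for fixed $\bm{t}$ it equals $\{\Exp[\exp(\mathrm{i}\,s_n^{-1/2}\bm{t}\tp\zeta_{n,1})\mid\Dn]\}^{s_n}$, and a second-order Taylor expansion yields $1-\|\bm{t}\|^2/(2s_n)+r_n$ with remainder controlled by $s_n^{-3/2}\Exp\{|\bm{t}\tp\zeta_{n,1}|^3\mid\Dn\}$; the fourth-moment bound forces $s_nr_n=\op$, so the conditional characteristic function converges in probability to $e^{-\|\bm{t}\|^2/2}$. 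A subsequence argument then turns this into convergence in probability of the conditional distribution function, which is the definition of $\cvdD$, and Cram\'er--Wold delivers the multivariate conclusion. The most delicate single ingredient remains the uniform lower bound on the smallest eigenvalue of $\Lambda_{n,R}(\htheta_n)$, because the data-dependent probabilities $\pi_{n,i}$ are not assumed to be bounded above.
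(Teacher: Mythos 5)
Your proof is correct and follows essentially the same route as the paper's: the identical conditionally i.i.d.\ representation with conditional mean $\0$ (via the first-order condition at $\htheta_n$) and conditional covariance exactly $\Lambda_{n,R}(\htheta_n)$, followed by the conditional Lindeberg--Feller theorem, where your fourth-moment Lindeberg bound is precisely the paper's Lyapunov-type bound with $\delta=2$ under Assumptions~\ref{ass:4} and \ref{ass:6}. The two points you elaborate beyond the paper---the lower bound on the smallest eigenvalue of $\Lambda_{n,R}(\htheta_n)$ needed to justify the normalization, and the conditional characteristic-function/subsequence argument upgrading the conclusion to the $\cvdD$ mode---are careful fill-ins of steps the paper leaves implicit, not a different argument.
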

\begin{proof}
Note that
\begin{equation}\label{dMr*}
\sqrt{s_n}\dM^*_{s_n}(\htheta_n)=\frac{1}{\sqrt{s_n}}\sumrr\frac{\dm(Z_i^*,\htheta_n)}{n\pi^*_i}\equiv\frac{1}{\sqrt{s_n}}\sumrr\bm{\eta}_i
\end{equation}
Given $\Dn, \bm{\eta}_1,...,\bm{\eta}_{s_n}$ are i.i.d, with
\begin{align}
\Exp(\bm{\eta}|\Dn)&=
\onen\sumn\dm(Z_i,\htheta_n)=\0,\text{ and}\label{Expeta}\\
\Var(\bm{\eta}_i|\Dn)&=\Lambda_{n,R}(\htheta_n)=\frac{1}{n^2}\sumn\frac{\dm(Z_i,\htheta_n)\dm\tp(Z_i,\htheta_n)}{\pi_{n,i}}\notag\\
&\leq\max_{i=1,...,n}\left(\frac{1}{n\pi_{n,i}} \right)\onen\sumn\dm(Z_i,\htheta_n)\dm\tp(Z_i,\htheta_n)
=\Op,\label{Vareta}
\end{align}
where the inequality in \eqref{Vareta} is in the Loewner ordering, i.e., $\A_1\le \A_2$ means $\A_1-\A_2$ is a negative semi-definite matrix. 

Meanwhile, for every $\varepsilon>0$ and some $\delta\in(0,2]$,
\begin{align*}
&\oner\sumrr\Exp\left\{\Vert \bm{\eta}_i\Vert^2I(\| \bm{\eta}_i\|>s_n^{1/2}\varepsilon)|\Dn  \right\}
\leq\frac{1}{s_n^{1+\delta/2}\varepsilon^\delta}\sumrr\Exp\left\{\Vert \bm{\eta}_i\Vert^{2+\delta}I(\| \bm{\eta}_i\|>s_n^{1/2}\varepsilon)|\Dn  \right\}\\
&\leq\frac{1}{s_n^{1+\delta/2}\varepsilon^\delta}\sumrr\Exp\left(\| \bm{\eta}_i\|^{2+\delta}|\Dn  \right)
\leq\frac{1}{s_n^{\delta/2}n^{2+\delta}\varepsilon^\delta}\sumn\frac{\|\dm(Z_i,\htheta_n)\|^{2+\delta}}{\pi_{n,i}^{1+\delta}}\\
&=\max_{i=1,...,n}\left(\frac{1}{n\pi_{n,i}} \right)^{1+\delta}\frac{1}{ns_n^{\delta/2}\varepsilon^\delta}\sumn\|\dm(Z_i,\htheta_n)\|^{2+\delta}
=O_P(s_n^{-\delta/2}).
\end{align*}
This shows that Lindeberg's condition is satisfied in probability. From \eqref{dMr*}, \eqref{Expeta} and \eqref{Vareta}, by the Lindeberg-Feller central limit theorem (Proposition 2.27 of \cite{Vaart:98}), conditionally on $\Dn$, \eqref{DistributionofdM} follows.
\end{proof}

\begin{proof}[Proof of Theorem~\ref{thm:1}]
Based on Lemma~\ref{lem:BrtoddM} and Lemma \ref{lem2:DistributionofdM}, now we are ready to prove Theorem \ref{thm:1}.
By direct calculation, we have that for any $\btheta$,
\begin{equation*}
  \Exp\left(\M_{s_n}^*(\btheta)|\Dn\right)=\M_n(\btheta).
\end{equation*}
By Chebyshev's inequality, for any $\varepsilon>0$,
\begin{align*}
\Pr\left\{\left|\M_{s_n}^*(\btheta)-\M_n(\btheta)\right|\geq \varepsilon |\Dn\right\}
&\leq\frac{\Var\{\M_{s_n}^*(\btheta)|\Dn\}}{\varepsilon^2}
=\frac{1}{\varepsilon^2s_nn^2}\sumn \frac{m^2(Z_i,\btheta)}{\pi_{n,i}}\\
&\leq \frac{1}{\varepsilon^2 s_n}\max_{i=1,...,n}\left(\frac{1}{n\pi_{n,i}}\right)\onen\sumn m^2(Z_i,\btheta)
=O_P\left(s_n^{-1}\right).
\end{align*}
Thus, for every $\btheta$,
\begin{align}
\M_{s_n}^*(\btheta)-\M_n(\btheta)=\opD(1).
\end{align}

Note that under Assumptions \ref{ass:1}, \ref{ass:2}, the parameter space is compact and $\htheta_n$ is the unique global maximum of the continuous concave function $M_n(\btheta)$. Thus from Theorem 5.9 and its remark of \cite{Vaart:98}, conditionally on $\Dn$,
\begin{equation}\label{ConsistencyOfTtheta}
  \|\ttheta_{s_n,R}-\htheta_n\|=o_{P|\Dn}(1)=\op.
\end{equation}
The consistency ensures that $\ttheta_{s_n,R}$ is close to $\htheta_n$ as long as $s_n$ is large. By Taylor expansion,
\begin{align}\label{Taylor}
  0=\dM_{s_n}^*(\ttheta_{s_n,R})
  &=\dM_{s_n}^*(\htheta_n)
  +B_{s_n}(\ttheta_{s_n,R}-\htheta_n),
\end{align}
where
  \begin{align*}
B_{s_n}=\int_{0}^{1}\oner \sumrr\frac{\ddm\{Z_i^*,\htheta_n+\lambda(\ttheta_{s_n,R}-\htheta_n) \}}{n\pi_{n,i}^*}\,\ud\lambda.
\end{align*}

From \eqref{Taylor} and Lemma \ref{lem:BrtoddM},
\begin{equation}\label{taylor-expansion}
  0=\dM_{s_n}^*(\ttheta_{s_n,R})
  =\dM_{s_n}^*(\htheta_n)+\{\ddM_n(\htheta_n)+\op\}
    (\ttheta_{s_n,R}-\htheta_n),
\end{equation}
which shows that%
\begin{align}\label{difference}
  \ttheta_{s_n,R}-\htheta_n
  &=-\{\ddM_n(\htheta_n)+\op\}^{-1}\dM_{s_n}^*(\htheta_n)\notag\\
  &=-\frac{1}{\sqrt{s_n}}\{\ddM_n(\htheta_n)+\op\}^{-1}\{ \Lambda_{n,R}(\htheta_n)\}^{1/2}\sqrt{s_n}\{ \Lambda_{n,R}(\htheta_n)\}^{-1/2}\dM_{s_n}^*(\htheta_n).
\end{align}
By Lemma \ref{lem2:DistributionofdM} and Slutsky's theorem, we obtain that, given full data $\Dn$ in probability, 
 \begin{align}\label{eq:P-phi}
  \sqrt{s_n}\{V_{n,R}(\htheta_n)\}^{-1/2}(\ttheta_{s_n,R}-\htheta_n)\rightarrow \Nor\left(\0,\I \right),
 \end{align}
 in conditional distribution, and this finishes the proof.
\end{proof}
\subsection{Proof for Theorem~\ref{thm:2}}
\addcontentsline{toc}{subsection}{Appendix B: Proof for Theorem~\ref{thm:2}}
Let $\nu_i=1$ if the $i$-th data point is selected in the subsample and $\nu_i=0$ otherwise. The estimator in (\ref{eq:28}) %
is the same as the maximizer of 
\begin{align*}
  M^*_P(\btheta)
  &=\onen\sum_{i=1}^{s_n^*}\frac{m(Z_i^*,\btheta)}{s_n\pi_{n,i}^*}
    =\onen\sum_{i=1}^{n}\frac{\nu_im(Z_i,\btheta)}{s_n\pi_{n,i}}.
\end{align*}
Here, we use $s_n$ to replace $s_n^*$ in~\eqref{eq:28} for convenience, and the resulting estimator is identical to $\ttheta_{s_n,P}$.

To prove Theorem \ref{thm:2}, we first establish the following Lemmas \ref{lem:dM_PAsim} and \ref{lem:ddMr}. 

\begin{lemma}\label{lem:dM_PAsim}
If Assumptions \ref{ass:4}-\ref{ass:6} hold, then, given $\Dn$,
  \begin{align*}
    \sqrt{s_n}\{\Lambda_{n,P}(\htheta_n)\}^{-1/2}\dM^*_P(\htheta_n) \cvdD \Nor(\0,\I),
  \end{align*}
  in conditional distribution, where
\begin{align*}
  \Lambda_{n,P}(\htheta_n)=\frac{1}{n^2}
  \sumn\frac{(1-s_n\pi_{n,i})\dm(Z_i,\htheta_n)\dm\tp(Z_i,\htheta_n)}{\pi_{n,i}}.
\end{align*}
\end{lemma}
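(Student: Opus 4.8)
The plan is to mirror the argument for Lemma~\ref{lem2:DistributionofdM}, adapting it to the Poisson structure in which the inclusion indicators are \emph{independent Bernoulli} variables rather than a single multinomial draw. Writing $\nu_i$ for the inclusion indicator as in the proof of Theorem~\ref{thm:2}, so that $\nu_i\sim\text{Bernoulli}(s_n\pi_{n,i})$ independently across $i$ given $\Dn$, I would first recenter each summand using $\dM_n(\htheta_n)=\0$ and record that
\[
  \sqrt{s_n}\,\dM^*_P(\htheta_n)
  =\frac{1}{\sqrt{s_n}}\sumn\bm\zeta_i,
  \qquad
  \bm\zeta_i=\frac{(\nu_i-s_n\pi_{n,i})\,\dm(Z_i,\htheta_n)}{n\pi_{n,i}}.
\]
The decisive structural difference from Lemma~\ref{lem2:DistributionofdM} is that, conditional on $\Dn$, the $\bm\zeta_i$ are independent but \emph{not} identically distributed, since both the Bernoulli parameter $s_n\pi_{n,i}$ and the vector $\dm(Z_i,\htheta_n)$ vary with $i$. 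Hence the Lindeberg--Feller theorem for triangular arrays, rather than an i.i.d.\ central limit theorem, is the natural tool.

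Second, I would compute the conditional moments: each $\bm\zeta_i$ is centred, $\Exp(\bm\zeta_i|\Dn)=\0$, and since $\Var(\nu_i|\Dn)=s_n\pi_{n,i}(1-s_n\pi_{n,i})$ one obtains $\Var(\bm\zeta_i|\Dn)=s_n(1-s_n\pi_{n,i})\pi_{n,i}^{-1}n^{-2}\dm(Z_i,\htheta_n)\dm\tp(Z_i,\htheta_n)$, so that
\[
  \frac{1}{s_n}\sumn\Var(\bm\zeta_i|\Dn)
  =\frac{1}{n^2}\sumn\frac{(1-s_n\pi_{n,i})\,\dm(Z_i,\htheta_n)\dm\tp(Z_i,\htheta_n)}{\pi_{n,i}}
  =\Lambda_{n,P}(\htheta_n),
\]
which is exactly the claimed limiting covariance. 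The extra factor $1-s_n\pi_{n,i}$, absent from $\Lambda_{n,R}$, is precisely the variance reduction produced by Bernoulli sampling, and the Poisson constraint $\pi_{n,i}\le s_n^{-1}$ guarantees $1-s_n\pi_{n,i}\ge0$, so this matrix is positive semidefinite.

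Third, for the Lindeberg condition I would follow the Lyapunov route used in Lemma~\ref{lem2:DistributionofdM}. The key simplification is that $|\nu_i-s_n\pi_{n,i}|\le1$ forces the fourth central moment of $\nu_i$ to be bounded by its variance $s_n\pi_{n,i}$, whence $\Exp(\|\bm\zeta_i\|^4|\Dn)\le s_n\|\dm(Z_i,\htheta_n)\|^4 n^{-4}\pi_{n,i}^{-3}$. Bounding the truncated second moment by the fourth moment and applying Assumption~\ref{ass:6} (to control $(n\pi_{n,i})^{-3}$) together with the fourth-moment bound $\onen\sumn\|\dm(Z_i,\htheta_n)\|^4=\Op$ from Assumption~\ref{ass:4}, the normalized Lindeberg sum $s_n^{-1}\sumn\Exp\{\|\bm\zeta_i\|^2I(\|\bm\zeta_i\|>\sqrt{s_n}\varepsilon)|\Dn\}$ is $\Op(s_n^{-1})=\op$. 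Invoking the Lindeberg--Feller theorem (Proposition~2.27 of \cite{Vaart:98}) conditionally on $\Dn$, together with the Cram\'er--Wold device for the multivariate statement, then delivers the stated conditional convergence.

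Finally, the step I expect to require the most care is the data-dependent standardization by $\{\Lambda_{n,P}(\htheta_n)\}^{-1/2}$: transferring the Lindeberg bound stated for the raw $\bm\zeta_i$ to the standardized summands, and treating $\Lambda_{n,P}(\htheta_n)$ as a fixed conditional covariance, requires $\{\Lambda_{n,P}(\htheta_n)\}^{-1/2}=\Op(1)$, i.e.\ the smallest eigenvalue of $\Lambda_{n,P}(\htheta_n)$ bounded away from zero in probability. I would establish this non-degeneracy by observing that the weights $(1-s_n\pi_{n,i})(n\pi_{n,i})^{-1}$ are bounded below by $1-s_n/n$ on the indices with $\pi_{n,i}\le\onen$, so that $\Lambda_{n,P}(\htheta_n)$ dominates a positive multiple of a matrix that, under Assumption~\ref{ass:4}, stays near the positive-definite $\Lambda(\btheta_0)$. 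This is the crux of the argument, since it is what legitimizes concluding the $\Nor(\0,\I)$ limit rather than merely $\Nor\{\0,\Lambda_{n,P}(\htheta_n)\}$ with a possibly degenerate conditional covariance.
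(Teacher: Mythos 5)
Your core argument coincides with the paper's proof of this lemma: the same representation of $\sqrt{s_n}\dM_P^*(\htheta_n)$ as a sum of conditionally independent Bernoulli-weighted terms, the same conditional mean and variance computation identifying $\Lambda_{n,P}(\htheta_n)$, and the same Lindeberg--Feller verification via a Lyapunov-type moment bound resting on Assumptions~\ref{ass:4} and \ref{ass:6}. Your fourth-moment bound is exactly the paper's $(2+\rho)$-moment bound with $\rho=2$, and your explicit recentering by $s_n\pi_{n,i}$ is a harmless (in fact slightly cleaner) variant: the paper works with the uncentered summands $\nu_i\dm(Z_i,\htheta_n)/(n\sqrt{s_n}\pi_{n,i})$, whose sum is conditionally centered because $\dM_n(\htheta_n)=\0$, and your centered version is identical to it term-by-term up to that zero sum.

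The one place where your proposal goes wrong is the final paragraph, which you call the crux. First, the paper does not attempt to establish non-degeneracy of $\Lambda_{n,P}(\htheta_n)$ at all; the normalization $\{\Lambda_{n,P}(\htheta_n)\}^{-1/2}$ is treated as well defined, consistent with how the limiting variance is handled in Theorem~\ref{thm:2} and with the explicit positive-definiteness assumption imposed in Theorem~\ref{thm:2p}. Second, and more importantly, your proposed justification fails: bounding the weights below by $1-s_n/n$ only over $\{i:\pi_{n,i}\le\onen\}$ gives, in the Loewner order, $\Lambda_{n,P}(\htheta_n)\ge(1-s_n/n)\,\onen\sum_{i:\,\pi_{n,i}\le 1/n}\dm(Z_i,\htheta_n)\dm\tp(Z_i,\htheta_n)$, but this restricted matrix need not be anywhere near $\Lambda(\btheta_0)$. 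Since $\bpi$ may depend on the data, it can assign $\pi_{n,i}>\onen$ --- indeed $\pi_{n,i}=\oner$, which makes the factor $1-s_n\pi_{n,i}$ vanish --- precisely to the observations whose gradients carry all of the mass in some direction $v$. For instance, if the data distribution puts mass $q>0$ off the hyperplane $\{v\tp\dm(Z,\btheta_0)=0\}$ and mass $1-q$ on it, and $s_n/n\to c>q$, one can set $\pi_{n,i}=\oner$ on the off-plane points while satisfying Assumptions~\ref{ass:4} and \ref{ass:6}; then $v\tp\Lambda_{n,P}(\htheta_n)v=0$, so both your lower bound and $\Lambda_{n,P}(\htheta_n)$ itself are singular in that direction. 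Hence the eigenvalue bound you want is not a consequence of Assumptions~\ref{ass:4} and \ref{ass:6} plus your domination argument; invertibility of $\Lambda_{n,P}(\htheta_n)$ should be regarded as an implicit premise of the lemma (or secured by an extra condition such as $\limsup s_n\max_i\pi_{n,i}<1$ on an informative subset), not as something your construction proves. The CLT portion of your proposal, which is what the paper actually proves, stands.
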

\begin{proof}
Write 
\begin{align*}
  \sqrt{s_n}\dM_P^*(\htheta_n)
  =\sumn\frac{\nu_i\dm(Z_i,\htheta_n)}{n\sqrt{s_n}\pi_{n,i}}\equiv \sumn\bm{\eta}_{Pi}.
  \end{align*}
By direct calculation and according to the definition of $\htheta_n$,
\begin{align*}
\Exp\left(\sumn\bm{\eta}_{Pi} \Big|\Dn \right)=\sqrt{s_n}\sumn\frac{\dm(Z_i,\htheta_n)}{n}=\0,
\end{align*}
and
\begin{align*}
  \Var\left( \sumn\bm{\eta}_{Pi}\Bigg|\Dn\right)
    &=\frac{1}{n^2}\sumn
      \frac{\Var(\nu_i|\Dn)\dm(Z_i,\htheta_n)\dm\tp(Z_i,\htheta_n)}{r\pi^2_i}\\
    &=\frac{1}{n^2}\sumn
      \frac{(1-s_n\pi_{n,i})\dm(Z_i,\htheta_n)\dm\tp(Z_i,\htheta_n)}{\pi_{n,i}}
  =\Lambda_{n,P}(\htheta_n)\\
    &\leq\Big(\max_{i}\frac{1}{n\pi_{n,i}}\Big)
      \onen\sumn\dm(Z_i,\htheta_n)\dm\tp(Z_i,\htheta_n)=\Op.
  \end{align*}

Next, we check Lindeberg's condition in conditional distribution. Note that for $\rho\in(0,2]$ and any $\varepsilon>0$,
\begin{align*}
  &\quad\sumn\Exp\left\{\|\bm{\eta}_{Pi}\|
    I(\|\bm{\eta}_{Pi}\|>\varepsilon)\Big|\Dn\right\}%
    \leq\frac{1}{\varepsilon^\rho}\sumn\Exp\left\{\|\bm{\eta}_{Pi}\|^{2+\rho}
    I(\|\bm{\eta}_{Pi}\|>\varepsilon)\Big|\Dn\right\}\\
  &\leq\frac{1}{\varepsilon^\rho}\sumn\Exp\left(\|\bm{\eta}_{Pi}\|^{2+\rho}\Big|\Dn\right)%
    =\frac{1}{\varepsilon^\rho}\Exp\left\{\sumn\frac{\nu_i^{2+\rho}\|\dm(Z_i,\htheta_n)\|^{2+\rho}}{n^{2+\rho}s_n^{1+\rho/2}\pi_{n,i}^{2+\rho}}\Bigg|\Dn\right\}\\
  &=\frac{1}{\varepsilon^\rho}\sumn\frac{\|\dm(Z_i,\htheta_n)\|^{2+\rho}}{n^{2+\rho}s_n^{\rho/2}\pi_{n,i}^{1+\rho}}\\
  &\leq \max_{i}\left(\frac{1}{n\pi_{n,i}}\right)^{1+\rho} \frac{1}{s_n^{\rho/2}\varepsilon^\rho n}\sumn \| \dm(Z_i,\htheta_n)\|^{2+\rho}
  =O_P(s_n^{-\rho/2})=\op.
\end{align*}
According to the Lindeberg-Feller Central Limit Theorem \citep[cf.]{Vaart:98}, given $\Dn$,
\begin{align*}
\sqrt{s_n}\{\Lambda_{n,P}(\htheta_n) \}^{-1/2}\dM^*_P(\htheta_n) \rightarrow \Nor(\0,\I),
\end{align*}
in conditional distribution.
\end{proof}
\begin{lemma}\label{lem:ddMr}
  Under Assumptions \ref{ass:3} and \ref{ass:6}, for any $\bu_{s_n}=\op$, conditional on $\Dn$,
  \begin{equation*}
    \onen\sumn\frac{\nu_i\ddm(Z_i,\htheta_n+\bu_{s_n})}{s_n\pi_{n,i}}
    -\onen\sumn\ddm(Z_i,\htheta_n)=o_P(1).
  \end{equation*}
\end{lemma}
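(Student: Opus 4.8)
The plan is to mirror the argument for Lemma~\ref{lem:BrtoddM}, working entrywise on the Hessian and decomposing the target difference into a Lipschitz-perturbation part and a sampling-fluctuation part. Fixing $k,l\in\{1,\ldots,d\}$, I would write the $(k,l)$ entry of the left-hand side as
\begin{align*}
  &\onen\sumn\frac{\nu_i\ddm_{k,l}(Z_i,\htheta_n+\bu_{s_n})}{s_n\pi_{n,i}}-\onen\sumn\ddm_{k,l}(Z_i,\htheta_n)\\
  &=\underbrace{\onen\sumn\frac{\nu_i\{\ddm_{k,l}(Z_i,\htheta_n+\bu_{s_n})-\ddm_{k,l}(Z_i,\htheta_n)\}}{s_n\pi_{n,i}}}_{\mathrm{(I)}}+\underbrace{\Big\{\onen\sumn\frac{\nu_i\ddm_{k,l}(Z_i,\htheta_n)}{s_n\pi_{n,i}}-\onen\sumn\ddm_{k,l}(Z_i,\htheta_n)\Big\}}_{\mathrm{(II)}},
\end{align*}
and then show that both $\mathrm{(I)}$ and $\mathrm{(II)}$ are $\opD(1)$.

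For the fluctuation term $\mathrm{(II)}$, the key facts are that the Poisson indicators are conditionally independent with $\Exp(\nu_i|\Dn)=s_n\pi_{n,i}$ and $\Var(\nu_i|\Dn)=s_n\pi_{n,i}(1-s_n\pi_{n,i})\le s_n\pi_{n,i}$ (the latter bound using $\pi_{n,i}\le s_n^{-1}$). Consequently $\Exp\{\mathrm{(II)}|\Dn\}=0$ exactly, and
\begin{equation*}
  \Var\{\mathrm{(II)}|\Dn\}=\frac{1}{n^2s_n^2}\sumn\frac{\Var(\nu_i|\Dn)\,\ddm_{k,l}^2(Z_i,\htheta_n)}{\pi_{n,i}^2}\le\frac{1}{s_n}\max_{i}\Big(\frac{1}{n\pi_{n,i}}\Big)\onen\sumn\ddm_{k,l}^2(Z_i,\htheta_n).
\end{equation*}
By Assumption~\ref{ass:6} and the bound $\onen\sumn\ddm_{k,l}^2(Z_i,\htheta_n)=\Op$ already established in \eqref{eq:ddmOP1}, this conditional variance is $O_P(s_n^{-1})$, so Chebyshev's inequality conditional on $\Dn$ yields $\mathrm{(II)}=O_{P|\Dn}(s_n^{-1/2})=\opD(1)$.

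For the perturbation term $\mathrm{(I)}$, I would invoke the Lipschitz condition of Assumption~\ref{ass:3} to obtain $|\mathrm{(I)}|\le\|\bu_{s_n}\|\,\onen\sumn\frac{\nu_i\psi(Z_i)}{s_n\pi_{n,i}}$. The weighted envelope sum has conditional mean $\onen\sumn\psi(Z_i)=\Op$ (law of large numbers and $\Exp\{\psi^2(Z)\}<\infty$) and, by the same variance computation as above, conditional variance bounded by $\frac{1}{s_n}\max_i(n\pi_{n,i})^{-1}\onen\sumn\psi^2(Z_i)=O_P(s_n^{-1})$; hence $\onen\sumn\frac{\nu_i\psi(Z_i)}{s_n\pi_{n,i}}=O_{P|\Dn}(1)$. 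Combined with $\|\bu_{s_n}\|=\op$ this gives $\mathrm{(I)}=\opD(1)$.

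The step I expect to require the most care is the bookkeeping in $\mathrm{(I)}$: because $\bu_{s_n}$ is random and the envelope sum is only bounded in conditional probability, I must multiply an $\op$ factor by an $O_{P|\Dn}(1)$ factor and verify that the product is $\opD(1)$, which I would justify through the equivalences between conditional and unconditional stochastic orders in Proposition~\ref{prop2}. Finally, since there are only $d^2$ entries and each is $\opD(1)$, the matrix conclusion follows in Frobenius norm by the triangle inequality.
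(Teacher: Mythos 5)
Your proposal is correct and follows essentially the same route as the paper's proof: the identical two-term decomposition into a Lipschitz-perturbation part and a sampling-fluctuation part, the same conditional mean/variance computations with the bound $\Var(\nu_i|\Dn)\le s_n\pi_{n,i}$ combined with Assumption~\ref{ass:6} and \eqref{eq:ddmOP1}, and Chebyshev's inequality conditional on $\Dn$. Your explicit appeal to Proposition~\ref{prop2} to handle the product of an $\op$ factor with an $O_{P|\Dn}(1)$ factor is in fact slightly more careful than the paper, which disposes of this point via the remark on the equivalence of conditional and unconditional convergence stated after Lemma~\ref{lem:BrtoddM}.
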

\begin{proof}
First, note that %
\begin{align}\label{eq:lem6Op}
\onen\sumn\frac{\nu_i\psi(Z_i)}{s_n\pi_{n,i}}=O_{P|\Dn}(1),
\end{align}
by Chebyshev's inequality and the fact that
\begin{align*}
\Exp\left(\onen\sumn\frac{\nu_i\psi(Z_i)}{s_n\pi_{n,i}}\Bigg|\Dn\right)
&=\onen\sumn\frac{\psi(Z_i)\Exp(\nu_i|\Dn)}{s_n\pi_{n,i}}
=\onen\sumn\psi(Z_i)=\Exp\{\psi(Z_i)\}+\op,\\
\Var\left(\onen\sumn\frac{\nu_i\psi(Z_i)}{s_n\pi_{n,i}}\Big|\Dn \right)
&=\frac{1}{n^2}\sumn \frac{\psi^2(Z_i)\Var(\nu_i|\Dn)}{s_n^2\pi_{n,i}^2}
\leq\frac{1}{n^2}\sumn \frac{\psi^2(Z_i)\Exp(\nu^2_i)}{s_n^2\pi_{n,i}^2}\\
&=\frac{1}{n^2}\sumn \frac{\psi^2(Z_i)}{s_n\pi_{n,i}}
\leq \frac{1}{s_nn}\sumn\psi^2(Z_i)\max_{i}\frac{1}{n\pi_{n,i}}
=O_P(s_n^{-1}).
\end{align*}

Thus, for every $k,l=1,2,...,d$, from Assumption \ref{ass:3}, we have
\begin{align*}
\left|\onen\sumn\frac{\nu_i\ddm_{k,l}(Z_i,\htheta_n+\bu_{s_n})}{s_n\pi_{n,i}}-\onen\sumn\frac{\nu_i\ddm_{k,l}(Z_i,\htheta_n)}{s_n\pi_{n,i}}\right|
&\leq\frac{\|\bu_{s_n}\|}{n}\sumn\frac{\nu_i\psi(Z_i)}{s_n\pi_{n,i}}
=o_{P}(1).
 \end{align*}
which shows that
\begin{align}\label{eq:lem6step11}
\onen\sumn\frac{\nu_i\ddm(Z_i,\htheta_n+\bu_{s_n})}{s_n\pi_{n,i}}-\onen\sumn\frac{\nu_i\ddm(Z_i,\htheta_n)}{s_n\pi_{n,i}}=o_{P}(1).
\end{align}
According to \eqref{eq:ddmOP1}, for every $k,l=1,2,...,d$
\begin{align*}
\Exp\left(\onen\sumn\frac{\nu_i\ddm_{k,l}(Z_i,\htheta_n)}{s_n\pi_{n,i}}\Bigg|\Dn\right)&=\onen\sumn\ddm_{k,l}(Z_i,\htheta_n),\\
\Var\left(\onen\sumn\frac{\nu_i\ddm_{k,l}(Z_i,\htheta_n)}{s_n\pi_{n,i}} \Bigg|\Dn \right)&=\frac{1}{s_nn^2}\sumn\frac{(1-s_n\pi_{n,i})\ddm^2_{k,l}(Z_i,\htheta_n)}{\pi_{n,i}}
\leq \frac{1}{s_nn^2}\sumn\frac{\ddm_{k,l}^2(Z_i,\htheta_n)}{\pi_{n,i}}\\
&\leq\max_{i}\left(\frac{1}{n\pi_{n,i}}\right)\frac{1}{s_nn}\sumn \ddm_{k,l}^2(Z_i,\htheta_n)
=O_P(s_n^{-1}).
\end{align*}
Thus, Chebyshev's inequality tells us that
\begin{align}\label{eq:lem6step21}
\onen\sumn\frac{\nu_i\ddm(Z_i,\htheta_n)}{s_n\pi_{n,i}}-\onen\sumn\ddm(Z_i,\htheta_n)=O_{P|\Dn}(s_n^{-1/2})=o_P(1).
\end{align}
Therefore, combining \eqref{eq:lem6step11} and \eqref{eq:lem6step21}, we have
\begin{align*}%
\onen\sumn\frac{\nu_i\ddm(Z_i,\htheta_n+\bu_{s_n})}{s_n\pi_{n,i}}-\onen\sumn\ddm(Z_i,\htheta_n)=o_P(1).
\end{align*}

\end{proof}

\begin{proof}[Proof of Theorem~\ref{thm:2}]
Denote
\begin{align*}
\gamma_P(\bu)&=s_nM^*_P(\htheta_n+\bu/\sqrt{s_n})-s_nM^*_P(\htheta_n).
\end{align*}
Under Assumption \ref{ass:2}, $\sqrt{s_n}(\ttheta_{s_n,P}-\htheta_n)$ is the unique maximizer of $\gamma_P(\bu)$ as $\ttheta_{s_n,P}$ is the unique maximizer of $M^*_P(\bu)$. By Taylor's expansion,
\begin{align*}
  \gamma_P(\bu)
    &=\sqrt{s_n}\bu^T\dM^*_P(\htheta_n)
      +\frac{1}{2}\bu^T\ddM^*_P(\htheta_n+\bau/\sqrt{s_n})\bu
\end{align*}
where $\bau$ lies between $\0$ and $\bu$.
From Lemma \ref{lem:dM_PAsim} $\sqrt{s_n}\dM^*_P(\htheta_n)$ is stochastically bounded in conditional probability given $\Dn$. From Lemma \ref{lem:ddMr}, conditional on $\Dn$, $\ddM^*_P(\htheta_n+\bau/\sqrt{s_n})-\ddM_n(\htheta_n)=\op$ and $\ddM_n(\htheta_n)$ converges to a positive-definite matrix. 

Thus from the Basic Corollary in page 2 of \cite{hjort2011asymptotics}, the maximizer of $s_n\gamma_P(\bu)$, $\sqrt{s_n}(\ttheta_{s_n,P} -\htheta_n)$, satisfies that
\begin{align}
  \sqrt{s_n}(\ttheta_{s_n,P} -\htheta_n)
  =\ddM_n^{-1}(\htheta_n)\sqrt{s_n}\dM^*_P(\htheta_n)+\op,
\end{align}
which implies that
\begin{align}
\sqrt{s_n}\{V_{n,P}(\htheta_n) \}^{-1/2}(\ttheta_{s_n,P} -\htheta_n)\rightarrow \Nor(\0,\I),
\end{align}
in conditional distribution, given $\Dn$ in probability.
This finishes the proof. 
\end{proof}

\subsection{Proof of Theorem~\ref{thm:1p}}
\begin{proof}[Proof of Theorem~\ref{thm:1p}]
Letting $S_{n,R}=\sqrt{s_n}(\ttheta_{s_n,R}-\htheta_n)$ and $Y_n=\sqrt{s_n}(\htheta_n-\btheta_0)$, 
we have
\begin{equation*}
    \sqrt{s_n}(\ttheta_{s_n,R}-\btheta_0)=S_{n,R}+Y_n.
\end{equation*}
According to Theorem \ref{thm:1}, we know that under Assumptions \ref{ass:1}-\ref{ass:6} the characteristic function of $S_{n,R}$ given $\Dn$ satisfies that
\begin{equation}\label{eq:42}
\Exp\big(e^{\img\bm{t}\tp S_{n,R}}\big|\Dn\big)
  =e^{-0.5\bm{t}\tp V_{n,R}(\htheta_n)\bm{t}}+\opD(1)
  =e^{-0.5\bm{t}\tp V_{n,R}(\htheta_n)\bm{t}}+\op,
\end{equation}
where %
$\img$ is the imaginary unit. 
For every $k,l=1,2,...,d$, from Lipschitz continuity, we have
\begin{equation*}
  \left|\onen\sumn\ddm_{k,l}(Z_i,\htheta_n)
    -\onen\sumn\ddm_{k,l}(Z_i,\btheta_0)\right|
  \leq\|\htheta_n-\btheta_0\|\onen\sumn\psi(Z_i)=\op.
\end{equation*}
Thus, applying the law of large numbers, we know that $\ddM_n(\htheta_n)=\ddM_n(\btheta_0)+\op=\ddM(\btheta_0)+\op$.

Next we prove that $\Lambda_{n,R}(\htheta_n)=\Lambda_{\pi}(\btheta_0)+\op$. We have
\begin{align*}
  &\|\Lambda_{n,R}(\htheta_n)-\Lambda_{n,R}(\btheta_0)\|\\
    &=\onen\left\|\sumn\frac{\dm(Z_i,\htheta_n)\dm\tp(Z_i,\htheta_n)
    -\dm(Z_i,\btheta_0)\dm\tp(Z_i,\btheta_0)}{n\pi_{n,i}}\right\|\\
  &\leq\max_{i}\left(\frac{1}{n\pi_{n,i}}\right)
    \onen\sumn\left\|\dm(Z_i,\htheta_n)\dm\tp(Z_i,\htheta_n)
    -\dm(Z_i,\htheta_n)\dm\tp(Z_i,\btheta_0)\right\|\\
  &\quad+\max_{i}\left(\frac{1}{n\pi_{n,i}}\right)
    \onen\sumn\left\|\dm(Z_i,\htheta_n)\dm\tp(Z_i,\btheta_0)
    -\dm(Z_i,\btheta_0)\dm\tp(Z_i,\btheta_0)\right\|\\
  &=\max_{i}\left(\frac{1}{n\pi_{n,i}}\right)
    \onen\sumn\left\{\|\dm(Z_i,\htheta_n)\|+\|\dm(Z_i,\btheta_0)\|\right\}
    \|\dm(Z_i,\htheta_n)-\dm(Z_i,\btheta_0)\|.
\end{align*}
Using Taylor's expansion, we obtain
\begin{equation*}
    \dm(Z_i,\htheta_n)=\dm(Z_i,\btheta_0)+B_{n,i}(\htheta_n-\btheta_0),
\end{equation*}
where $B_{n,i}=\int_{0}^{1}\ddm\{Z_i,\btheta_0+\lambda(\htheta_n-\btheta_0)\}\ud\lambda$ satisfies that 
\begin{align*}
  \|B_{n,i}-\ddm(Z_i,\btheta_0)\|
  \leq d\int_{0}^{1}\lambda\psi(Z_i)\|\htheta_n-\btheta_0\|\ud\lambda
  =0.5d\psi(Z_i)\|\htheta_n-\btheta_0\|.
\end{align*}
due to the Lipschitz continuity in Assumption~\ref{ass:3}. This shows that
\begin{align*}
  \|\dm(Z_i,\htheta_n)-\dm(Z_i,\btheta_0)\|
  \le0.5d\psi(Z_i)\|\htheta_n-\btheta_0\|^2
  +\|\ddm(Z_i,\btheta_0)\|\|(\htheta_n-\btheta_0)\|.
\end{align*}
Thus,
\begin{align*}
    &\|\Lambda_{n,R}(\htheta_n)-\Lambda_{n,R}(\btheta_0)\|\\
    &\leq\max_{i}\left(\frac{1}{n\pi_{n,i}}\right)
      \Bigg[0.5d\|\htheta_n-\btheta_0\|^{2}
      \left\{\onen\sumn\|\dm(Z_i,\htheta_n)\|\psi(Z_i)
      +\onen\sumn\|\dm(Z_i,\btheta_0)\|\psi(Z_i)\right\}\\
    &\quad+\|\htheta_n-\btheta_0\|
      \left\{\onen\sumn\|\dm(Z_i,\htheta_n)\|\|\ddm(Z_i,\btheta_0)\|
      +\onen\sumn\|\dm(Z_i,\btheta_0)\|\|\ddm(Z_i,\btheta_0)\|\right\}\Bigg].%
\end{align*}
From H\"older's inequality
\begin{align*}
  \onen\sumn\|\dm(Z_i,\htheta_n)\|\psi(Z_i)
  \le\left\{\onen\sumn\|\dm(Z_i,\htheta_n)\|^4\right\}^{\frac{1}{4}}
  \left\{\onen\sumn\psi(Z_i)^{\frac{4}{3}}\right\}^{\frac{3}{4}}
  =\Op.
\end{align*}
Similarly, we can show that $\onen\sumn\|\dm(Z_i,\btheta_0)\|\psi(Z_i)$, $\onen\sumn\|\dm(Z_i,\htheta_n)\|\|\ddm(Z_i,\btheta_0)\|$, and $\onen\sumn\|\dm(Z_i,\btheta_0)\|\|\ddm(Z_i,\btheta_0)\|$ are all $\Op$. 
Therefore, $\|\Lambda_{n,R}(\htheta_n)-\Lambda_{n,R}(\btheta_0)\|=\op$, and thus (\ref{eq:42}) implies that
\begin{equation*}
  \Exp\big(e^{\img\bm{t}\tp S_{n,R}}\big|\Dn\big)
  =e^{-0.5\bm{t}\tp\ddM^{-1}(\btheta_0)\Lambda_{\pi}(\btheta_0)
    \ddM^{-1}(\btheta_0)\bm{t}} +\op, 
\end{equation*}
where the $\op$ is bounded.

Note that $Y_{n}=\sqrt{\frac{s_n}{n}}\sqrt{n}(\htheta_n-\btheta_0)$. 
Using Proposition \ref{prop1}, we have
\begin{equation*}
  \Exp(e^{\img\bm{t}\tp Y_{n}}) \rightarrow
  e^{-0.5\bm{t}\tp c\ddM^{-1}(\btheta_0)\Lambda(\btheta_0)\ddM^{-1}(\btheta_0)\bm{t}}.
\end{equation*}
Since $Y_{n}$ is $\mathcal{D}_{n}$ measurable, we have
\begin{align*}
  &\left|\Exp\left\{e^{\img\bm{t}\tp(Y_{n}+S_{n,R})}-e^{\img\bm{t}\tp Y_{n}}e^{-0.5\bm{t}\tp\ddM^{-1}(\btheta_0)\Lambda_{\pi}(\btheta_0)\ddM^{-1}(\btheta_0)\bm{t}}\right\}\right|\\
  &=\left|\Exp\left[\Exp\left\{e^{\img\bm{t}\tp(Y_{n}+S_{n,R})}-e^{\img\bm{t}\tp Y_{n}}e^{-0.5\bm{t}\tp\ddM^{-1}(\btheta_0)\Lambda_{\pi}(\btheta_0)\ddM^{-1}(\btheta_0)\bm{t}}\Big|\Dn\right\}\right]\right|\\
  &=\left|\Exp\left[e^{\img\bm{t}\tp Y_{n}}\left\{\Exp\left(e^{\img\bm{t}S_{n,R}}\big|\Dn\right)-e^{-0.5\bm{t}\tp\ddM^{-1}(\btheta_0)\Lambda_{\pi}(\btheta_0)\ddM^{-1}(\btheta_0)\bm{t}}\right\}\right]\right|\\
    &\leq\Exp\left\{\left|\Exp\left(e^{\img\bm{t}\tp S_{n,R}}|\Dn\right)-e^{-0.5\bm{t}\tp\ddM^{-1}(\btheta_0)\Lambda_{\pi}(\btheta_0)\ddM^{-1}(\btheta_0)\bm{t}}\right|\right\}\\
  &=o(1),
\end{align*}
where the last step is from the dominated convergence theorem. 
Therefore,
\begin{equation*}
  \Exp\big\{e^{\img\bm{t}\tp(Y_{n}+S_{n,R})}\big\}
  =\Exp(e^{\img\bm{t}\tp Y_{n}})
  e^{-0.5\bm{t}\tp\ddM^{-1}(\btheta_0)\Lambda_{\pi}(\btheta_{0})
    \ddM^{-1}(\btheta_0)\bm{t}}+o(1)
  \to\Exp\big\{e^{-0.5\bm{t}\tp V_{R}^{U}(\btheta_0)\bm{t}}\big\}.
\end{equation*}
Hence, we obtain that
\begin{equation*}
    \sqrt{s_n}\{V^{U}_{R}(\btheta_{0})\}^{-1/2}(\ttheta_{s_n,R}-\btheta_{0})\cvd\Nor(\0,\I_d).
\end{equation*}
\end{proof}

\subsection{Proof of Theorem~\ref{thm:2p}}
\begin{proof}[Proof of Theorem~\ref{thm:2p}]
The technique of proving Theorem~\ref{thm:2p} is similar to that of proving Theorem~\ref{thm:1p}.   
Denoting $S_{n,P}=\sqrt{s_n}(\ttheta_{s_n,P}-\htheta_n)$, we write $\sqrt{s_n}(\ttheta_{s_n,P}-\btheta_0)=S_{n,P}+Y_n$.  
From Theorem \ref{thm:2}, we know that under Assumptions \ref{ass:1}-\ref{ass:6},
\begin{equation*}
  \Exp\big(e^{\img\bm{t}\tp S_{n,P}}\big|\Dn\big)
  =e^{-0.5\bm{t}\tp V_{n,P}(\htheta_n)\bm{t}}+\opD
  =e^{-0.5\bm{t}\tp V_{n,P}(\htheta_n)\bm{t}}+\op.
\end{equation*}
In the proof of Theorem~\ref{thm:1p}, we have proved that $\ddM(\htheta_n)=\ddM(\btheta_0)+\op$ and $\Lambda_{n,R}(\htheta_n)=\Lambda_{\pi}+\op$. Using a similar approach, we can show that %
\begin{equation*}
  \frac{s_n}{n^{2}}\sumn\dm(Z_i,\htheta_n)\dm\tp(Z_i,\htheta_n)
  =c\Lambda(\btheta_0)+\op.
\end{equation*}
Therefore, 
\begin{equation*}
  \Exp\big(e^{\img\bm{t}\tp S_{n,P}}\big|\Dn\big)
  =e^{-0.5\bm{t}\tp \ddM^{-1}(\btheta_0)
    \{\Lambda_{\pi}(\btheta_0)-c\Lambda(\btheta_0)\}
    \ddM^{-1}(\btheta_0)\bm{t}}+\op.
\end{equation*}
Now we use the same technique used in the proof of Theorem \ref{thm:1p}. Since $Y_{n}$ is $\mathcal{D}_{n}$ measurable, we have
\begin{align*}
    &\left|\Exp\left\{e^{\img\bm{t}\tp(Y_{n}+S_{n,P})}-e^{\img\bm{t}\tp Y_{n}}e^{-0.5\bm{t}\tp\ddM^{-1}(\btheta_0)\{\Lambda_{\pi}(\btheta_0)-c\Lambda(\btheta_0)\}\ddM^{-1}(\btheta_0)\bm{t}}\right\}\right|\\
    &=\left|\Exp\left[e^{\img\bm{t}\tp Y_{n}}\left\{\Exp\left(e^{\img\bm{t}S_{n,P}}|\Dn\right)-e^{-0.5\bm{t}\tp\ddM^{-1}(\btheta_0)\{\Lambda_{\pi}(\btheta_0)-c\Lambda(\btheta_0)\}\ddM^{-1}(\btheta_0)\bm{t}}\right\}\right]\right|\\
      &\leq\Exp\left\{\left|\Exp\left(e^{\img\bm{t}\tp S_{n,P}}|\Dn\right)-e^{-0.5\bm{t}\tp\ddM^{-1}(\btheta_0)\{\Lambda_{\pi}(\btheta_0)-c\Lambda(\btheta_0)\}\ddM^{-1}(\btheta_0)\bm{t}}\right|\right\}
  \rightarrow0,
\end{align*}
where the last step is from the dominated convergence theorem. %
Hence,
\begin{equation*}
  \Exp\big\{e^{\img\bm{t}\tp(Y_{n}+S_{n,P})}\big\}
  =\Exp(e^{\img\bm{t}\tp Y_{n}})e^{-0.5\bm{t}\tp\ddM^{-1}(\btheta_0)\{\Lambda_{\pi}(\btheta_0)-c\Lambda(\btheta_0)\}\ddM^{-1}(\btheta_0)\bm{t}}+o(1)
  \to\Exp\big\{e^{-0.5\bm{t}\tp V_{P}^{U}(\btheta_0)\bm{t}}\big\},
\end{equation*}
and this finishes the proof. 
\end{proof}

\black
\subsection{Proof of Theorem~\ref{thm:3}}
\addcontentsline{toc}{subsection}{Appendix C: Proof of Theorem~\ref{thm:3}}
\begin{proof}[Proof of Theorem~\ref{thm:3}]
  For the result in \eqref{eq:SSPmMSE},
\begin{align*}
  \tr\{\Lambda_{n,R}(\htheta_n)\}
  =&\frac{1}{n^2}\sumn\frac{\|\dm(Z_i,\htheta_n)\|^2}{\pi_{n,i}}
  =\frac{1}{n^2}\sumn\pi_{n,i}
     \sumn\frac{\|\dm(Z_i,\htheta_n)\|^2}{\pi_{n,i}}
     \ge\frac{1}{n^2}\bigg\{\sumn\|\dm(Z_i,\htheta_n)\|\bigg\}^2.
\end{align*}
Here, the last step is from the Cauchy-Schwarz inequality and the equality holds if and only if $\pi_{n,i}\propto\|\dm(Z_i,\htheta_n)\|$.

\end{proof}
\subsection{Proof of Theorem~\ref{thm:4}}
\addcontentsline{toc}{subsection}{Appendix D: Proof of Theorem~\ref{thm:4}}
\begin{proof}
Note that
\begin{align*}
\tr\{\Lambda_{n,P}(\htheta_n)\}&=\tr\left\{\frac{1}{n^2}\sumn\frac{(1-s_n\pi_{n,i})\dm(Z_i,\htheta_n)\dm\tp(Z_i,\htheta_n)}{\pi_{n,i}}
\right\}\\
&=\frac{1}{n^2}\left[\sumn\frac{\|\dm(Z_i,\htheta_n)\|^2}{\pi_{n,i}}
-s_n\sumn\|\dm(Z_i,\htheta_n)\|^2\right].
\end{align*}
Thus, minimizing $\tr\{\Lambda_{n,P}(\htheta_n)\}$ is equal to minimizing $\sumn\frac{\|\dm(Z_i,\htheta_n)\|^2}{\pi_{n,i}}$. For $i=1, ..., n$, let $t_i=\|\dm(Z_i,\htheta_n)\|$ and let $t_{(i)}$ denote the order statistics of $\|\dm(Z_i,\htheta_n)\|$, i.e., $t_{(i)}=\|\dm(Z,\htheta_n)\|_{(i)}$. The optimization problem of minimizing $\tr\{\Lambda_{n,P}(\htheta_n)\}$ subject to the constraints on $\pi_{n,i}$ can be presented as minimizing
\begin{align}\label{eq:object}
  T(\pi_1,\pi_2,...,\pi_n) &=\sumn\frac{t_{(i)}^2}{\pi_{n,i}},\\
\text{ subject to }
  \sumn\pi_{n,i}&=1\quad\text{and}\quad
  0\leq\pi_{n,i}\leq \oner,i=1,2,...,n.\notag
\end{align}
Defining slack variables $\omega_1^2,\omega_2^2,...,\omega_n^2$, to use Lagrangian multiplier method, we can construct
\begin{align*}
H(\pi_1,...,\pi_n,\tau,\mu_1,...,\mu_n,\omega_1,...,\omega_n)=\sumn\frac{t_{(i)}^2}{\pi_{n,i}}+\tau\left( \sumn\pi_{n,i}-1\right)+\sumn\mu_i\left(\pi_{n,i}+\omega_i^2-\oner\right).
\end{align*}
By taking the derivatives, the Karush–Kuhn–Tucker (KKT) conditions \citep{NumericalOptimization1999} are
\begin{align}
  \frac{\partial H}{\partial \pi_{n,i}}
  &=-\frac{t_{(i)}^2}{\pi_{n,i}^2}+\tau+\mu_i=0,
  &i=1,2,...,n. \label{eq:K1}\\
  \frac{\partial H}{\partial \tau}
  &=\sumn \pi_{n,i}-1=0,\label{eq:K2}\\
  \frac{\partial H}{\partial \mu_i}
  &=\pi_{n,i}+\omega_i^2=\oner,
  &i=1,2,...,n.\label{eq:K3}\\
  \frac{\partial H}{\partial \omega_i}
  &=2\mu_i\omega_i=0,
  &i=1,2,...,n.\label{eq:K4}\\
  \mu_i&\geq 0,&i=1,2,...,n.
\end{align}
From \eqref{eq:K1}, we have
\begin{align}\label{eq:K5}
\pi_{n,i}=\frac{t_{(i)}}{\sqrt{\tau+\mu_i}},\quad i=1,2,...,n.
\end{align}
Combining it with \eqref{eq:K3}, we have
\begin{align}\label{eq:K6}
\frac{t_{(i)}}{\sqrt{\tau+\mu_i}}+\omega_i^2=\oner,\quad i=1,2,...,n.
\end{align}
According to \eqref{eq:K4}, at least one of $\mu_i$ and $\omega_i$ must be 0. From \eqref{eq:K5} and \eqref{eq:K6},
\begin{align}
  &\text{if } t_{(i)}<\frac{\sqrt{\tau}}{s},
  && \mu=0 \text{ and }
  \pi_{n,i}=\frac{t_{(i)}}{\sqrt{\tau}}<\oner;\label{eq:K7}\\
  &\text{if } t_{(i)}\ge\frac{\sqrt{\tau}}{s},
  && \omega_i=0 \text{ and }
  \pi_{n,i}=\frac{t_{(i)}}{\sqrt{\tau+\mu_i}}=\oner.\label{eq:K8}
\end{align}
Thus, letting $g$ be the number of cases that $t_{(i)}\ge\frac{\sqrt{\tau}}{s}$, from \eqref{eq:K2} and the fact that $t_{(i)}$ is non-decreasing in $i$,
\begin{align}
  1=\sumn\pi_{n,i}
  =\sum_{i=1}^{n-g}\frac{t_{(i)}}{\sqrt{\tau}}
  +\sum_{i=n-g+1}^n\oner
  =\frac{\sum_{i=1}^{n-g}t_{(i)}}{\sqrt{\tau}}+\frac{g}{s},
\end{align}
which shows that
\begin{align}\label{eq:K10}
 \sqrt{\tau}=\frac{s}{s-g}\sum_{i=1}^{n-g}t_{(i)}.
\end{align}
Combining\eqref{eq:K7}, \eqref{eq:K8}, and \eqref{eq:K10}, 
\begin{numcases}{\pi_{n,i}=}
  \frac{t_{(i)}(s_n-g)}{s_n\sum_{i=1}^{n-g}t_{(i)}},
  & for $i=1,2,...,n-g;$\label{eq:K11}\\
  \oner, & for $i=n-g+1,...,n.$\label{eq:K12}
\end{numcases}
From \eqref{eq:K10},
\begin{align}
  H=\frac{\sum_{i=1}^{n-g}t_{(i)}}{s_n-g}=\frac{\sqrt{\tau}}{s_n},
\end{align}
Thus, from~\eqref{eq:K7} and~\eqref{eq:K8}, we know $t_{(i)}<H$ for $i=1,2,...,n-g$, and $t_{(i)}\ge H$, for $i=n-g+1, ..., n$. Therefore
\begin{align}
  \sumn(t_{(i)}\wedge H)=\sum_{i=1}^{n-g}t_{(i)}
  +\sum_{i=n-g+1}^nH=s_nH
\end{align}
Thus, from~\eqref{eq:K11}, for $i=1,2,...,n-g$,
\begin{align}
  \pi_{n,i}=\frac{t_{(i)}}{s_nH}=\frac{t_{(i)}\wedge H}{\sumn(t_{(i)}\wedge H)};
\end{align}
from~\eqref{eq:K12}, for $i=n-g+1, ..., n$,
\begin{align}
  \pi_{n,i}=\frac{H}{s_nH}=\frac{t_{(i)}\wedge H}{\sumn(t_{(i)}\wedge H)}.
\end{align}

For the result under the A-optimality, define $t_{(i)}=\|\ddM^{-1}_n(\htheta_n)\dm(Z,\htheta_n)\|_{(i)}$ and the proof is the same as the used for the L-optimality.
\end{proof}

\subsection{Proof of Theorem~\ref{thm:5}}
\addcontentsline{toc}{subsection}{Appendix E: Proof of Theorem~\ref{thm:5}}
The proof of Theorem~\ref{thm:5} relies on Lemmas~\ref{lem:two-step1} and \ref{lem:PoiDistributionofdM} below. %
\begin{lemma}\label{lem:two-step1}
Under Assumption \ref{ass:3}, if $\|\ttheta_{s_n,R}^\alpha-\htheta_n\|=o_P(1)$, then conditional on $\Dn$ and $\ttheta_{s_0,R}^{0*}$,
\begin{align}
B_{s_n}^{\ttheta_{s_0,R}^{0*}}-\ddM_n(\htheta_n)=o_P(1),
\end{align}
where
\begin{align*}
B_{s_n}^{\ttheta_{s_0,R}^{0*}}&=\int_{0}^{1}\oner \sumrr\frac{\ddm\left\{Z_i^*,\htheta_n+\lambda(\ttheta_{s_n,R}^{\alpha}-\htheta_n) \right\}}{n\tilde{\pi}_{n,R\alpha i}^{\mvc*}}\,\ud\lambda.
\end{align*}
\end{lemma}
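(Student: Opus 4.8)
The plan is to mirror the proof of Lemma~\ref{lem:BrtoddM} almost verbatim; the one essential new ingredient is that the defensive mixing in Algorithm~\ref{alg:3} supplies a \emph{deterministic} lower bound on the working probabilities, which takes over the role played by Assumption~\ref{ass:6} in Lemma~\ref{lem:BrtoddM}. Concretely, since $\tilde{\pi}_{n,R\alpha i}^{\mvc}=(1-\alpha)\tilde{\pi}_{n,Ri}^{\mvc}+\alpha\onen\ge\alpha\onen$, we have $n\tilde{\pi}_{n,R\alpha i}^{\mvc}\ge\alpha$ and hence $\max_{i}(n\tilde{\pi}_{n,R\alpha i}^{\mvc})^{-1}\le\alpha^{-1}$ for every realization of the pilot. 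This is precisely why only Assumption~\ref{ass:3} is needed here: the analog of Assumption~\ref{ass:6} holds automatically with the explicit non-random constant $\alpha^{-1}$.

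The remainder of the argument is carried out conditionally on both $\Dn$ and the pilot estimate $\ttheta_{s_0,R}^{0*}$. Once we condition on these, the $\tilde{\pi}_{n,R\alpha i}^{\mvc}$ are fixed numbers and $Z_1^*,\dots,Z_{s_n}^*$ are i.i.d.\ draws from $\Dn$ with these probabilities, so for any function $g$ one has $\Exp\{g(Z_i^*)/(n\tilde{\pi}_{n,R\alpha i}^{\mvc*})\mid\Dn,\ttheta_{s_0,R}^{0*}\}=\onen\sumn g(Z_j)$. Taking $g=\psi$, the conditional mean is $\onen\sumn\psi(Z_i)=\Op$ and the conditional variance is bounded by $\alpha^{-1}s_n^{-1}\onen\sumn\psi^2(Z_i)=O_P(s_n^{-1})$, so Chebyshev's inequality gives $\oner\sumrr\psi(Z_i^*)/(n\tilde{\pi}_{n,R\alpha i}^{\mvc*})=\OpD(1)$, just as in the proof of Lemma~\ref{lem:BrtoddM}.

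With this bound in hand, I would reproduce the two-term split used for $B_{s_n}$. First, Lipschitz continuity (Assumption~\ref{ass:3}) controls the replacement of $\ddm_{k,l}\{Z_i^*,\htheta_n+\lambda(\ttheta_{s_n,R}^{\alpha}-\htheta_n)\}$ by $\ddm_{k,l}(Z_i^*,\htheta_n)$, the error being at most $\lambda\|\ttheta_{s_n,R}^{\alpha}-\htheta_n\|\oner\sumrr\psi(Z_i^*)/(n\tilde{\pi}_{n,R\alpha i}^{\mvc*})=\op$ by the assumed consistency $\|\ttheta_{s_n,R}^{\alpha}-\htheta_n\|=\op$, mirroring \eqref{eq:29}. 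Second, another application of Chebyshev---again using the conditional mean $\onen\sumn\ddm_{k,l}(Z_i,\htheta_n)$ and a conditional variance of order $\alpha^{-1}s_n^{-1}\onen\sumn\ddm_{k,l}^2(Z_i,\htheta_n)=O_P(s_n^{-1})$, with $\onen\sumn\ddm_{k,l}^2(Z_i,\htheta_n)=\Op$ borrowed from \eqref{eq:ddmOP1}---shows $\oner\sumrr\ddm_{k,l}(Z_i^*,\htheta_n)/(n\tilde{\pi}_{n,R\alpha i}^{\mvc*})-\onen\sumn\ddm_{k,l}(Z_i,\htheta_n)=\opD(1)$. Combining the two pieces through the triangle inequality entrywise and integrating over $\lambda\in(0,1)$ yields the claim.

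The only genuinely new point to watch is the bookkeeping of the double conditioning on $(\Dn,\ttheta_{s_0,R}^{0*})$: I must check that every $O_P$ factor quoted (such as $\onen\sumn\psi^2(Z_i)$ and $\onen\sumn\ddm_{k,l}^2(Z_i,\htheta_n)$) depends only on $\Dn$ and is thus measurable with respect to the conditioning, so that the conditional Chebyshev bounds convert to unconditional $\op$ statements via the equivalence noted after Lemma~\ref{lem:BrtoddM}. Because the mixing constant $\alpha^{-1}$ is non-random, no part of the variance control relies on a probabilistic bound for the probabilities themselves; consequently the step that was the delicate point in Lemma~\ref{lem:BrtoddM}---ensuring $\max_i(n\pi_{n,i})^{-1}=\Op$ interacts correctly with the conditional moments---becomes routine here.
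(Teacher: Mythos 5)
Your proposal is correct and follows essentially the same route as the paper's proof: the same two-term decomposition (Lipschitz replacement of $\ddm_{k,l}$ at the perturbed point by its value at $\htheta_n$, then a conditional Chebyshev bound using the mean $\onen\sumn\ddm_{k,l}(Z_i,\htheta_n)$ and a variance of order $O_P(s_n^{-1})$ via \eqref{eq:ddmOP1}), with the deterministic bound $n\tilde{\pi}_{n,R\alpha i}^{\mvc}\ge\alpha$ from the defensive mixing playing exactly the role that Assumption~\ref{ass:6} plays in Lemma~\ref{lem:BrtoddM}, which is also why the paper's statement needs only Assumption~\ref{ass:3}. Your handling of the $\psi$-average (keeping the weight $1/(n\tilde{\pi}_{n,R\alpha i}^{\mvc*})$ and using the conditional mean/variance computation rather than the paper's cruder bound $\psi(Z_i^*)/\alpha$) is, if anything, a slightly cleaner rendering of the same argument, and your measurability bookkeeping for the double conditioning matches the paper's use of the conditional-to-unconditional equivalence.
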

\begin{proof}
For every $k,l=1,2,...,d$, from Lipschitz continuity, we have
\begin{align}\label{eq:50}
&\left|\oner\sumrr\frac{\ddm_{k,l}\{Z_i^*,\htheta_n+\lambda(\ttheta_{s_n,R}^{\alpha}-\htheta_n)\}}{n\tilde{\pi}_{n,R\alpha i}^{\mvc*}}-
\oner\sumrr\frac{\ddm_{k,l}(Z_i^*,\htheta_n)}{n\tilde{\pi}_{n,R\alpha i}^{\mvc*}} \right|\notag\\
&\leq\oner\sumrr\frac{
\psi(Z_i^*)\|\lambda(\ttheta_{s_n,R}^{\alpha}-\htheta_n)\|}{n\tilde{\pi}_{n,R\alpha i}^{\mvc*}} \notag\\
&\leq\lambda\|\ttheta_{s_n,R}^{\alpha}-\htheta_n\|\oner\sumrr\frac{\psi(Z_i^*)}{\alpha} %
=\|\ttheta_{s_n,R}^{\alpha}-\htheta_n\|O_P(1)%
=\op.
\end{align}

According to \eqref{eq:ddmOP1}, we have
\begin{align*}
\Exp\left(\oner\sumrr\frac{\ddm_{k,l}(Z_i^*,\htheta_n)}{n\tilde{\pi}_{n,R\alpha i}^{\mvc*}}\bigg|\Dn,\ttheta_{s_0,R}^{0*}\right)&=\onen\sumn\ddm_{k,l}(Z_i,\htheta_n),\\
\Var\left(\oner\sumrr\frac{\ddm_{k,l}(Z_i^*,\htheta_n)}{n\tilde{\pi}_{n,R\alpha i}^{\mvc*}}\bigg|\Dn,\ttheta_{s_0,R}^{0*}\right)&=\oner\sumn\frac{\ddm_{k,l}^2(Z_i,\htheta_n)}{n^2\pi_{n,R\alpha i}^{\mvc}}
\leq\frac{1}{\alpha s_nn}\sumn \ddm_{k,l}^2(Z_i,\htheta_n)
=O_P(s_n^{-1}).
\end{align*}
Thus, by Chebyshev's inequality, similar to \eqref{eq:ddm}, we have
\begin{align}\label{eq:51}
\left\|\oner\sumrr\frac{\ddm(Z_i^*,\htheta_n)}{n\tilde{\pi}_{n,R\alpha i}^{\mvc*}}-\onen\sumn\ddm(Z_i,\htheta_n)\right\|=o_{P|\Dn,\ttheta_{s_0,R}^{0*}}(1).
\end{align}

Combining \eqref{eq:50} and \eqref{eq:51}, we have
\begin{align*}
\left\|B_{s_n}-\onen\sumn\ddm(Z_i,\htheta_n)\right\|
&\leq \int_{0}^{1}\left\|\oner \sumrr\frac{\ddm\{Z_i^*,\htheta_n+\lambda(\ttheta_{s_n,R}^{\alpha}-\htheta_n) \}}{n\tilde{\pi}_{n,R\alpha i}^{\mvc*}}-\onen\sumn\ddm(Z_i,\htheta_n)\right\|\,\ud\lambda\\
&\leq \int_{0}^{1}
\Bigg[\left\|\oner\sumrr\frac{\ddm\{Z_i^*,\htheta_n+\lambda(\ttheta_{s_n,R}^{\alpha}-\htheta_n)\}}{n\tilde{\pi}_{n,R\alpha i}^{\mvc*}}-
\oner\sumrr\frac{\ddm(Z_i^*,\htheta_n)}{n\tilde{\pi}_{n,R\alpha i}^{\mvc*}} \right\|\\
&\quad+\left\|\oner\sumrr\frac{\ddm(Z_i^*,\htheta_n)}{n\tilde{\pi}_{n,R\alpha i}^{\mvc*}}-\onen\sumn\ddm(Z_i,\htheta_n)\right\|\Bigg]\,\ud\lambda
=o_P(1),
\end{align*}
which finishes the proof.
\end{proof}
\begin{lemma}\label{lem:PoiDistributionofdM}
If Assumption \ref{ass:4} %
hold, then given $\Dn$ and ${\ttheta_{s_0,R}^{0*}}$ in probability,
\begin{align}\label{eq:lem4}
  \sqrt{s_n}\{\Lambda_R^\alpha(\ttheta_{s_0,R}^{0*})\}^{-1/2}
  \dM_{R\alpha}^*(\htheta_n)\rightarrow
  \Nor\left(\0, \I\right),
\end{align}
in conditional distribution, where
\begin{align*}
  \dM_{R\alpha}^*(\htheta_n)=\frac{1}{ns_n}\sumrr \frac{\dm(Z_i^*,\htheta_n)}{\tilde{\pi}_{n,R\alpha i}^{\mvc*}},
    \quad\text{ and }\quad
\Lambda_R^\alpha(\ttheta_{s_0,R}^{0*})=\frac{1}{n^2}\sumn\frac{\dm(Z_i,\htheta_n)\dm\tp(Z_i,\htheta_n)}{\tilde{\pi}_{n,R\alpha i}^{\mvc}}.
\end{align*}
\end{lemma}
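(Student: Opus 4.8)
The plan is to reproduce the Lindeberg--Feller argument used in Lemma~\ref{lem2:DistributionofdM}, now conditioning on both $\Dn$ and the pilot estimate $\ttheta_{s_0,R}^{0*}$, with the crucial new ingredient being the lower bound that the defensive mixing forces on the sampling probabilities. First I would write
\begin{equation*}
  \sqrt{s_n}\,\dM_{R\alpha}^*(\htheta_n)
  =\frac{1}{\sqrt{s_n}}\sumrr\bm{\eta}_i,
  \qquad
  \bm{\eta}_i=\frac{\dm(Z_i^*,\htheta_n)}{n\tilde{\pi}_{n,R\alpha i}^{\mvc*}}.
\end{equation*}
Given $\Dn$ and $\ttheta_{s_0,R}^{0*}$ the whole distribution $\{\tilde{\pi}_{n,R\alpha i}^{\mvc}\}_{i=1}^n$ is fixed, so the $\bm{\eta}_i$ are i.i.d.; a direct computation gives $\Exp(\bm{\eta}_i\mid\Dn,\ttheta_{s_0,R}^{0*})=\dM_n(\htheta_n)=\0$ because $\htheta_n$ maximizes $M_n$, and $\Var(\bm{\eta}_i\mid\Dn,\ttheta_{s_0,R}^{0*})=\Lambda_R^\alpha(\ttheta_{s_0,R}^{0*})$, which is exactly the standardizing matrix in \eqref{eq:lem4}.

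The key observation, and the reason only Assumption~\ref{ass:4} is needed, is that mixing with the uniform distribution yields the deterministic bound $\tilde{\pi}_{n,R\alpha i}^{\mvc}=(1-\alpha)\tilde{\pi}_{n,Ri}^{\mvc}+\alpha\onen\ge\alpha\onen$, so $(n\tilde{\pi}_{n,R\alpha i}^{\mvc})^{-1}\le\alpha^{-1}$ for every $i$. This plays the role that $\max_i(n\pi_{n,i})^{-1}=\Op$ played in Lemma~\ref{lem2:DistributionofdM}. Using it together with Assumption~\ref{ass:4} gives $\Lambda_R^\alpha(\ttheta_{s_0,R}^{0*})\le\alpha^{-1}\onen\sumn\dm(Z_i,\htheta_n)\dm\tp(Z_i,\htheta_n)=\Op$ in the Loewner order, so the conditional variance is stochastically bounded. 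Lindeberg's condition then follows from a fourth-moment bound: taking $\delta=2$,
\begin{align*}
  \oner\sumrr\Exp\big\{\|\bm{\eta}_i\|^2
    I(\|\bm{\eta}_i\|>s_n^{1/2}\varepsilon)\bigm|\Dn,\ttheta_{s_0,R}^{0*}\big\}
  &\le\frac{1}{s_n^2\varepsilon^2}\sumrr
    \Exp\big(\|\bm{\eta}_i\|^4\bigm|\Dn,\ttheta_{s_0,R}^{0*}\big)\\
  &=\frac{1}{s_n\varepsilon^2n^4}\sumn
    \frac{\|\dm(Z_i,\htheta_n)\|^4}{(\tilde{\pi}_{n,R\alpha i}^{\mvc})^3}\\
  &\le\frac{1}{\alpha^3}\frac{1}{s_n\varepsilon^2n}\sumn
    \|\dm(Z_i,\htheta_n)\|^4=O_P(s_n^{-1}),
\end{align*}
which tends to zero since $\onen\sumn\|\dm(Z_i,\htheta_n)\|^4=\Op$. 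The Lindeberg--Feller central limit theorem (Proposition~2.27 of \cite{Vaart:98}), applied to the standardized summands $\{\Lambda_R^\alpha(\ttheta_{s_0,R}^{0*})\}^{-1/2}\bm{\eta}_i$, then yields \eqref{eq:lem4}.

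The step needing most care---and the main obstacle---is justifying that the standardization by $\{\Lambda_R^\alpha(\ttheta_{s_0,R}^{0*})\}^{-1/2}$ is legitimate, i.e.\ that the standardizing matrix stays positive-definite with its smallest eigenvalue bounded away from zero in probability. This is not immediate, because the approximated optimal probabilities can be highly nonuniform, with a single $\tilde{\pi}_{n,Ri}^{\mvc}$ close to one, so a naive lower bound on $\Lambda_R^\alpha$ degenerates. The uniform component rescues the argument: by the Cauchy--Schwarz inequality and $\sumn\tilde{\pi}_{n,R\alpha i}^{\mvc}=1$, for every unit vector $\bm{v}$,
\begin{equation*}
  \bm{v}\tp\Lambda_R^\alpha(\ttheta_{s_0,R}^{0*})\bm{v}
  =\frac{1}{n^2}\sumn
    \frac{\{\bm{v}\tp\dm(Z_i,\htheta_n)\}^2}{\tilde{\pi}_{n,R\alpha i}^{\mvc}}
  \ge\Big\{\onen\sumn|\bm{v}\tp\dm(Z_i,\htheta_n)|\Big\}^2,
\end{equation*}
and the right-hand side converges to $\{\Exp|\bm{v}\tp\dm(Z,\btheta_0)|\}^2$, which is bounded below uniformly in $\bm{v}$ by the moment inequality $\Exp|\bm{v}\tp\dm|\ge(\bm{v}\tp\Lambda\bm{v})^{3/2}/(\Exp\|\dm(Z,\btheta_0)\|^4)^{1/2}$ together with the positive-definiteness of $\Lambda$ and the finite fourth moment in Assumption~\ref{ass:4}. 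Finally, by the same equivalence between conditional and unconditional stochastic-order statements established in Proposition~\ref{prop2} (now with the extra conditioning on $\ttheta_{s_0,R}^{0*}$), every $\op$ and $\Op$ statement above may be read conditionally, which completes the plan.
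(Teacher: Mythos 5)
Your proposal is correct and takes essentially the same route as the paper's proof: the same decomposition into conditionally i.i.d.\ summands $\bm{\eta}_i$ given $\Dn$ and $\ttheta_{s_0,R}^{0*}$, the same mean/variance computation identifying $\Lambda_R^\alpha(\ttheta_{s_0,R}^{0*})$, the same deterministic bound $\tilde{\pi}_{n,R\alpha i}^{\mvc}\ge\alpha\onen$ substituting for Assumption~\ref{ass:6}, and the Lindeberg--Feller theorem with your $\delta=2$ being a special case of the paper's $\delta\in(0,2]$. Your closing paragraph verifying that $\Lambda_R^\alpha(\ttheta_{s_0,R}^{0*})$ is nondegenerate is a correct addition the paper leaves implicit, though note the Cauchy--Schwarz lower bound there uses only $\sumn\tilde{\pi}_{n,R\alpha i}^{\mvc}=1$, so the defensive mixing is not actually what rescues that step.
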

\begin{proof}
Note that
\begin{equation}\label{dMr*2}
\sqrt{s_n}\dM_{R\alpha}^*(\htheta_n)=\frac{1}{\sqrt{s_n}}\sumrr\frac{\dm(Z_i^*,\htheta_n)}{n\tilde{\pi}_{n,R\alpha i}^{\mvc*}}\equiv\frac{1}{\sqrt{s_n}}\sumrr\bm{\eta}_i^{\ttheta_{s_0,R}^{0*}}.
\end{equation}
Given $\Dn$ and $\ttheta_{s_0,R}^{0*}$, $\bm{\eta}_1^{\ttheta_{s_0,R}^{0*}},...,\bm{\eta}_{s_n}^{\ttheta_{s_0,R}^{0*}}$ are i.i.d, with
\begin{align}
\Exp(\bm{\eta}_i^{\ttheta_{s_0,R}^{0*}}|\Dn,\ttheta_{s_0,R}^{0*})
&=\onen\sumn\dm(Z_i,\htheta_n)=\0,\label{Expeta2}\quad\text{and}\\
  \Var(\bm{\eta}_i^{\ttheta_{s_0,R}^{0*}}|\Dn,{\ttheta_{s_0,R}^{0*}})
  &=\Exp\left\{\frac{\dm(Z_i^*,\htheta_n)\dm\tp(Z_i^*,\htheta_n)}{n^2(\tilde{\pi}_{n,R\alpha i}^{\mvc*})^2} \Bigg|\Dn,{\ttheta_{s_0,R}^{0*}} \right\}\\
&=\frac{1}{n^2}\sumn\frac{\dm(Z_i,\htheta_n)\dm\tp(Z_i,\htheta_n)}{\tilde{\pi}_{n,R\alpha i}^{\mvc*}}=\Lambda_R^\alpha(\ttheta_{s_0,R}^{0*}).\label{Vareta2}%
\end{align}
Meanwhile, for every $\varepsilon>0$ and some $\delta\in(0,2]$,
\begin{align*}
&\oner\sumrr\Exp\left\{\| \bm{\eta}_i^{\ttheta_{s_0,R}^{0*}}\|^2I(\| \bm{\eta}_i^{\ttheta_{s_0,R}^{0*}}\|>s_n^{1/2}\varepsilon)\Big|\Dn,{\ttheta_{s_0,R}^{0*}}\right\}\\
&\leq\frac{1}{s_n^{1+\delta/2}\varepsilon^\delta}\sumrr\Exp\left\{\| \bm{\eta}_i^{\ttheta_{s_0,R}^{0*}}\|^{2+\delta}I(\| \bm{\eta}_i^{\ttheta_{s_0,R}^{0*}}\|>s_n^{1/2}\varepsilon)\Big|\Dn,{\ttheta_{s_0,R}^{0*}}  \right\}\\
&\leq\frac{1}{s_n^{1+\delta/2}\varepsilon^\delta}\sumrr\Exp\left(\| \bm{\eta}_i^{\ttheta_{s_0,R}^{0*}}\|^{2+\delta}\Big|\Dn,{\ttheta_{s_0,R}^{0*}}  \right)\\
&\leq\frac{1}{s_n^{\delta/2}n^{2+\delta}\varepsilon^\delta}\sumn\frac{\|\dm(Z_i,\htheta_n)\|^{2+\delta}}{(\tilde{\pi}_{n,R\alpha i}^{\mvc*})^{1+\delta}}\\
&\leq\frac{1}{s_n^{\delta/2}\alpha^{1+\delta}\varepsilon^\delta}\onen \sumn\|\dm(Z_i,\htheta_n)\|^{2+\delta}
=O_P(s_n^{-\delta/2})=\op.
\end{align*}
where the second last equality is from Assumption \ref{ass:4}. This show that Lindeberg's condition is satisfied in probability. From \eqref{dMr*2}, \eqref{Expeta2} and \eqref{Vareta2}, by the Lindeberg-Feller central limit theorem (Proposition 2.27 of \cite{Vaart:98}), conditional on $\Dn, {\ttheta_{s_0,R}^{0*}}$, we obtain \eqref{eq:lem4}.
\end{proof}

\begin{proof}[Proof of Theorem~\ref{thm:5}]
By direct calculation, we have
\begin{align*}
\Exp\left\{M_{R\alpha}^*(\btheta)\Big|\Dn,{\ttheta_{s_0,R}^{0*}}\right\}&=M_n(\btheta),\\
\Var\left\{M_{R\alpha}^*(\btheta)\Big|\Dn,{\ttheta_{s_0,R}^{0*}}\right\}&\leq\frac{1}{s_nn^2}\sumn\frac{m^2(Z_i,\btheta)}{\tilde{\pi}_{n,R\alpha i}^{\mvc*}}
\leq\frac{1}{s_nn}\sumn\frac{m^2(Z_i,\btheta)}{\alpha}
=O_P(s_n^{-1}).
\end{align*}
By Chebyshev's inequality, for each $\btheta$, we have
\begin{align*}
M_{R\alpha}^*(\btheta)-M_n(\btheta)=o_{P|\Dn,\ttheta_{s_0,R}^{0*}}(1).
\end{align*}
Under Assumptions \ref{ass:1} and \ref{ass:2}, the parameter space is compact and $\htheta_n$ is the unique global maximum of the continuous concave function $M_n(\btheta)$. Thus from Theorem 5.9 and its remark of \cite{Vaart:98}, conditionally on $\Dn$ and ${\ttheta_{s_0,R}^{0*}}$,
\begin{align*}
\|\ttheta_{s_n,R}^\alpha-\htheta_n\|=\op.
\end{align*}

By Taylor expansion
\begin{align*}
  0= \dM_{R\alpha}^*(\ttheta_{s_n,R}^\alpha)
 &=\dM_{R\alpha}^*(\htheta_n)+B_{s_n}^{\ttheta_{s_0,R}^{0*}}(\ttheta_{s_n,R}^\alpha-\htheta_n),
 \end{align*}
so
\begin{align*}
  \ttheta_{s_n,R}^\alpha-\htheta_n
  &=-\Big(B_{s_n}^{\ttheta_{s_0,R}^{0*}}\Big)^{-1}\dM_{R\alpha}^*(\htheta_n)\\
&=-\frac{1}{\sqrt{s_n}}\Big(B_{s_n}^{\ttheta_{s_0,R}^{0*}}\Big)^{-1}\{\Lambda_R^\alpha(\ttheta_{s_0,R}^{0*})\}^{1/2}\sqrt{s_n}\{\Lambda_R^\alpha(\ttheta_{s_0,R}^{0*}) \}^{-1/2}\dM_{R\alpha}^*(\htheta_n).
\end{align*}
Therefore, from Lemma \ref{lem:two-step1} and Lemma \ref{lem:PoiDistributionofdM}, conditional on $\Dn,{\ttheta_{s_0,R}^{0*}}$, by Slutsky's theorem
\begin{align}\label{eq:19}
  \sqrt{s_n}\{\Lambda_R^\alpha(\ttheta_{s_0,R}^{0*})\}^{-1/2}\ddM_n(\htheta_n)(\ttheta_{s_n,R}^\alpha-\htheta_n)\rightarrow \Nor\left(\0, \I\right),
\end{align}
in conditional distribution. 

Next, we check the distance bewtween $ \Lambda_R^\alpha(\ttheta_{s_0,R}^{0*})$ and $\Lambda_R^\alpha(\htheta_n)$.
\begin{align}\label{eq:Lamn-Lamn}
&\|\Lambda_R^\alpha(\ttheta_{s_0,R}^{0*})-\Lambda_R^\alpha(\htheta_n)\|\notag\\
&=\left\|\frac{1}{n^2}\sumn\frac{\dm(Z_i,\htheta_n)\dm\tp(Z_i,\htheta_n)}{(1-\alpha)\pi_{R i}^{\mvc}({\ttheta_{s_0,R}^{0*}})+\alpha\onen}-\frac{1}{n^2}\sumn\frac{\dm(Z_i,\htheta_n)\dm\tp(Z_i,\htheta_n)}{(1-\alpha)\pi_{R i}^{\mvc}(\htheta_n)+\alpha\onen} \right\|\notag\\
&\leq\frac{1}{n^2}\sumn \|\dm(Z_i,\htheta_n)\|^2
    \left|\frac{1}{(1-\alpha)\pi_{R i}^{\mvc}({\ttheta_{s_0,R}^{0*}})+\alpha\onen}
    -\frac{1}{(1-\alpha)\pi_{R i}^{\mvc}(\htheta_n)+\alpha\onen} \right|\notag\\
&< \frac{1}{\alpha^2}\sumn \|\dm(Z_i,\htheta_n)\|^2\ \Big|
\pi_{R i}^{\mvc}({\ttheta_{s_0,R}^{0*}})-\pi_{R i}^{\mvc}(\htheta_n)\Big|\notag\\
  &\le\frac{1}{\alpha^2}\sumn
    \|\dm(Z_i,\htheta_n)\|^2\Bigg\{
    \frac{\Big|\|\dm(Z_i,\ttheta_{s_0,R}^{0*})\|
    -\|\dm(Z_i,{\htheta_n})\|\Big|}
    {\sumjn\|\dm(Z_j,\ttheta_{s_0,R}^{0*})\|}\notag\\
  &\hspace{6cm}
 +\|\dm(Z_i,\htheta_n)\|\frac{\sumjn\Big|\|\dm(Z_j,\ttheta_{s_0,R}^{0*})\|-\|\dm(Z_j,\htheta_n)\|\Big|}
     {\sumjn\|\dm(Z_j,{\htheta_n})\|\sumjn\|\dm(Z_j,{\ttheta_{s_0,R}^{0*}})\|}\Bigg\}\notag\\
&\equiv\frac{1}{\alpha^2}\sumn \|\dm(Z_i,\htheta_n)\|^2\left(\Delta_{1i}+\Delta_{2i} \right).
\end{align}
Under Assumption \ref{ass:3}, for any $j=1,2,...,n$
\begin{align}\label{eq:opteq2}
&\left|\|\dm(Z_j,\htheta_n)\|-\|\dm(Z_j,{\ttheta_{s_0,R}^{0*}})\| \right|
\leq\|\dm(Z_j,\htheta_n)-\dm(Z_j,{\ttheta_{s_0,R}^{0*}}) \|\notag\\
&\leq\sqrt{\sum_{k=1}^{d}\{\dm_k(Z_j,\htheta_n) -\dm_k(Z_j,{\ttheta_{s_0,R}^{0*}})\}^2}
\leq\sum_{k=1}^{d}\left|\dm_k(Z_j,\htheta_n) -\dm_k(Z_j,{\ttheta_{s_0,R}^{0*}})\right| \notag\\
&\leq\sum_{k=1}^{d}\left|\ddm_{k}^T(Z_j,\xi_k)(\htheta_n-{\ttheta_{s_0,R}^{0*}})\right|
\leq\|\htheta_n-{\ttheta_{s_0,R}^{0*}}\|\sum_{k=1}^{d}\|\ddm_k(Z_j,\xi_k)\|
\equiv\|\htheta_n-{\ttheta_{s_0,R}^{0*}}\|h(Z_j),
\end{align}
where $\dm_k(Z_j,\htheta_n)$ is the $k$th element of $\dm(Z_j,\htheta_n)$, $\ddm_k(Z_j,\htheta_n)$ is the $k$th column of $\ddm(Z_j,\htheta_n)$, and all $\xi_k$ are between $\htheta_n$ and $\ttheta_{s_0,R}^{0*}$.  
Thus,
\begin{align}
  \Delta_{1i}\le\frac{\|\htheta_n-\ttheta_{s_0,R}^{0*}\|h(Z_i)}
  {\sumjn\|\dm(Z_j,\ttheta_{s_0,R}^{0*})\|},
\end{align}
and
\begin{align}
  &\Delta_{2i}\le 
    \frac{\|\dm(Z_i,\htheta_n)\|
    \|\htheta_n-{\ttheta_{s_0,R}^{0*}}\|\sumjn h(Z_j)}
    {\sumjn\|\dm(Z_j,{\htheta_n})\|\sumjn\|\dm(Z_j,{\ttheta_{s_0,R}^{0*}})\|}
\end{align}

From \eqref{eq:ddmOP1} and Assumption \ref{ass:3}
\begin{align}
&\onen\sumjn h^2(Z_j)
\leq d\onen\sumjn\sum_{k=1}^{d}\|\ddm_k(Z_j,\xi_k)\|^2
= d\onen\sumjn\sum_{k=1}^{d}\sum_{l=1}^{d}\ddm_{k,l}^2(Z_j,\xi_k)\notag \\
&\leq d\onen\sumjn\sum_{k=1}^{d}\sum_{l=1}^{d}\left(2\ddm_{k,l}^2(Z_j,\htheta_n)+2\psi^2(Z_j)\|\ttheta_{s_0,R}^{0*}-\htheta_n\|^2\right)
=O_P(1)\label{eq:h(Z)OP1}
\end{align}
which also implies that $\onen\sumjn h(Z_j)=\Op$. Thus, 

\begin{align}
  \sumn\|\dm(Z_i,\htheta_n)\|\Delta_{1i}
  & \le\frac{O_P(\|\htheta_n-\ttheta_{s_0,R}^{0*}\|)}{n}
    \sumn\|\dm(Z_i,\htheta_n)\|^2h(Z_i)\notag\\
  &\le O_P(\|\htheta_n-\ttheta_{s_0,R}^{0*}\|)
    \bigg\{\onen\sumn\|\dm(Z_i,\htheta_n)\|^4\bigg\}^{\frac{1}{2}}
    \bigg\{\onen\sumn h^2(Z_i)\bigg\}^{\frac{1}{2}},\label{eq:Delta1}
\end{align}
and
\begin{align}
  \sumn\|\dm(Z_i,\htheta_n)\|\Delta_{2i}
  & =O_P(\|\htheta_n-\ttheta_{s_0,R}^{0*}\|)\onen
    \sumn\|\dm(Z_i,\htheta_n)\|^2\label{eq:Delta2}
\end{align}

Combining \eqref{eq:Lamn-Lamn}, \eqref{eq:Delta1}, and \eqref{eq:Delta2}, %
we obtain that for large $s_0$, $s_n$ and $n$,
\begin{align*}
&\|\Lambda_R^\alpha(\ttheta_{s_0,R}^{0*})-\Lambda_R^\alpha(\htheta_n)\|
=\|\htheta_n-{\ttheta_{s_0,R}^{0*}}\|\Op=\op.
\end{align*}

Thus, Slutsky's theorem and \eqref{eq:19} indicate that given $\Dn$ and $\ttheta_{s_0,R}^{0*}$, as $s_0, s_n$ and $n\to\infty$
\begin{align*}
  \sqrt{s_n}\{V_{n,R}^\alpha(\htheta_n)\}^{-1/2}(\ttheta_{s_n,R}^\alpha-\htheta_n)\rightarrow \Nor\left(\0, \I\right),
\end{align*}
in conditional distribution. %
\end{proof}

\subsection{Proof of Theorem \ref{thm:6}}
\addcontentsline{toc}{subsection}{Appendix F:Proof of Theorem \ref{thm:6}}
The proof of Theorem~\ref{thm:6} relies on Lemmas~\ref{lem:plem1}, \ref{lem:poiddMr} and \ref{lem:H}.
\begin{lemma}\label{lem:plem1}
Under Assumptions \ref{ass:4},  conditional on $\Dn$ and ${\ttheta_{s_0,R}^{0*}}$, then
  \begin{align*}
    \sqrt{s_n}\{\Lambda_{n,P}^\alpha(\ttheta_{s_0,P}^{0*}) \}^{-1/2}\dM_{P\alpha}^*(\htheta_n) \rightarrow \Nor(\0,\I),
  \end{align*}
in conditional distribution, where
\begin{align*}
  \Lambda_{n,P}^\alpha(\ttheta_{s_0,P}^{0*})
  &=\frac{s_n}{n^2}\sumn
    \frac{\{1-(s_n\tilde{\pi}_{n,P\alpha i}^{\mvc})\wedge1\}
    \dm(Z_i,\htheta_n)\dm\tp(Z_i,\htheta_n)}
    {(s_n\tilde{\pi}_{n,P\alpha i}^{\mvc})\wedge1}.%
\end{align*}
\end{lemma}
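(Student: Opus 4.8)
The plan is to mirror the proof of Lemma~\ref{lem:dM_PAsim}, the two differences being the defensively mixed, truncated inclusion probabilities and the extra conditioning on the pilot estimate $\ttheta_{s_0,P}^{0*}$. Writing $\nu_i=I(u_i\le s_n\tilde\pi_{n,P\alpha i}^{\mvc})$ and $p_i=(s_n\tilde\pi_{n,P\alpha i}^{\mvc})\wedge1$, the subsample gradient becomes
\[ \sqrt{s_n}\,\dM_{P\alpha}^*(\htheta_n)=\sumn\bm{\eta}_{P\alpha i}, \qquad \bm{\eta}_{P\alpha i}=\frac{\sqrt{s_n}\,\nu_i\,\dm(Z_i,\htheta_n)}{n\,p_i}. \]
Conditional on $\Dn$ and $\ttheta_{s_0,P}^{0*}$ the probabilities $p_i$ are fixed numbers and the $\nu_i$ are independent Bernoulli$(p_i)$ variables, so $\{\bm{\eta}_{P\alpha i}\}$ is a conditionally independent, mean-zero triangular array to which the Lindeberg--Feller theorem (Proposition 2.27 of \cite{Vaart:98}) will be applied.

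Next I would compute the first two conditional moments. Since $\htheta_n$ maximizes $M_n$, we have $\sumn\dm(Z_i,\htheta_n)=\0$, so the conditional mean is $\0$; and using $\Var(\nu_i\mid\cdot)=p_i(1-p_i)$ the conditional variance is $\frac{s_n}{n^2}\sumn\frac{1-p_i}{p_i}\dm(Z_i,\htheta_n)\dm\tp(Z_i,\htheta_n)=\Lambda_{n,P}^\alpha(\ttheta_{s_0,P}^{0*})$, exactly the stated normalizing matrix. The crucial ingredient for bounding it is the floor supplied by the uniform mixing, $\tilde\pi_{n,P\alpha i}^{\mvc}\ge\alpha/n$: whenever $p_i<1$ this gives $p_i\ge s_n\alpha/n$, hence $\frac{s_n}{n^2p_i}\le\frac1{\alpha n}$, while data points with $p_i=1$ drop out because $1-p_i=0$. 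This bounds $\Lambda_{n,P}^\alpha$ in the Loewner order by $\alpha^{-1}\onen\sumn\dm(Z_i,\htheta_n)\dm\tp(Z_i,\htheta_n)=\Op$ under Assumption~\ref{ass:4}, as needed for the Lindeberg--Feller normalization.

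Finally I would verify Lindeberg's condition in conditional probability. With the $(2+\delta)$-moment majorization at $\delta=2$ and $\nu_i^4=\nu_i$,
\[ \sumn\Exp\big\{\|\bm{\eta}_{P\alpha i}\|^2I(\|\bm{\eta}_{P\alpha i}\|>\varepsilon)\mid\cdot\big\}\le\frac1{\varepsilon^2}\sumn\Exp\big(\|\bm{\eta}_{P\alpha i}\|^4\mid\cdot\big)=\frac1{\varepsilon^2}\,\frac{s_n^2}{n^4}\sumn\frac{\|\dm(Z_i,\htheta_n)\|^4}{p_i^3}, \]
and the same floor $p_i\ge s_n\alpha/n$ reduces the right-hand side to $\varepsilon^{-2}\alpha^{-3}s_n^{-1}\onen\sumn\|\dm(Z_i,\htheta_n)\|^4=O_P(s_n^{-1})=\op$, again by Assumption~\ref{ass:4}. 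The Lindeberg--Feller CLT then delivers the conclusion, given $\Dn$ and $\ttheta_{s_0,P}^{0*}$ in probability. The only genuinely new bookkeeping relative to Lemma~\ref{lem:dM_PAsim} is the truncation $p_i=(s_n\tilde\pi_{n,P\alpha i}^{\mvc})\wedge1$, and I expect the one point needing care to be precisely its effect: it is what turns the variance into the Poisson form $(1-p_i)/p_i$ with the saturated ($p_i=1$) points contributing nothing, while remaining harmless for the moment bounds because $p_i\le1$ only shrinks the summands; the defensive floor $\alpha/n$ is what keeps every $1/p_i$ factor controlled uniformly in $i$.
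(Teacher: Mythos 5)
Your proposal is correct and follows essentially the same route as the paper's proof: write $\sqrt{s_n}\,\dM_{P\alpha}^*(\htheta_n)$ as a sum of conditionally independent, mean-zero terms (mean zero via $\sumn\dm(Z_i,\htheta_n)=\0$), identify the conditional variance as $\Lambda_{n,P}^\alpha(\ttheta_{s_0,P}^{0*})$ with the $(1-p_i)/p_i$ Poisson form, use the defensive floor $\tilde{\pi}_{n,P\alpha i}^{\mvc}\ge\alpha/n$ (hence $p_i\ge s_n\alpha/n$, which also covers the saturated $p_i=1$ points since $s_n\alpha/n<1$) to bound both the variance and the Lindeberg term, and invoke the Lindeberg--Feller CLT. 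The only cosmetic difference is that you verify Lindeberg with the fourth moment (the paper's $\rho=2$ case of its general $\rho\in(0,2]$ bound), which is exactly what Assumption~\ref{ass:4} supplies.
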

\begin{proof}
 For the sake of readability, in the sequel, we redefine $\nu_i$ as $\nu_i=I(u_i\leq s_n\pi_{n,P\alpha i}^\opt)$ and let 
\begin{align}
  \sqrt{s_n}\dM_{P\alpha}^*(\htheta_n)
  =\sumn\frac{\nu_i\sqrt{s_n}\dm(Z_i,\htheta_n)}
  {n\{(s_n\tilde{\pi}_{n,P\alpha i}^{\mvc})\wedge1\}}
  \equiv \sumn\bm{\eta}_i^{\ttheta_{s_0,P}^{0*}}.
\end{align}
From direct calculation and the definition of $\htheta_n$, we have
\begin{align*}
  \Exp\Big(\sqrt{s_n}\dM_{P\alpha}^*(\htheta_n)\Big|\Dn,\ttheta_{s_0,P}^{0*}\Big)
&=\frac{\sqrt{s_n}}{n}\sumn\dm(Z_i,\htheta_n)=\0,
\end{align*}
and
  \begin{align*}
  \Var\Big(\sqrt{s_n}\dM_{P\alpha}^*(\htheta_n)\Big|\Dn,\ttheta_{s_0,P}^{0*}\Big)
    &=\frac{s_n}{n^2}\sumn\frac{\Var(\nu_i|\Dn,{\ttheta_{s_0,P}^{0*}})
      \dm(Z_i,\htheta_n)\dm\tp(Z_i,\htheta_n)}
      {\{(s_n\tilde{\pi}_{n,P\alpha i}^{\mvc})\wedge1\}^2}\\
    &=\frac{s_n}{n^2}\sumn\frac{
      \{1-(s_n\tilde{\pi}_{n,P\alpha i}^{\mvc})\wedge1\}
      \dm(Z_i,\htheta_n)\dm\tp(Z_i,\htheta_n)}
      {(s_n\tilde{\pi}_{n,P\alpha i}^{\mvc})\wedge1}\\
  &\leq\frac{1}{\alpha n}\sumn\dm(Z_i,\htheta_n)\dm\tp(Z_i,\htheta_n)
  =\Op.
\end{align*}
Next, we check Lindeberg's condition. For any $\epsilon>0$ and $\rho\in (0,2]$, 
\begin{align*}
  &\Exp\left\{\sumn\|\bm{\eta}_i^{\ttheta_{s_0,P}^{0*}}\|
    I(\|\bm{\eta}_i^{\ttheta_{s_0,P}^{0*}}\|>\varepsilon)
    \Bigg|\Dn,{\ttheta_{s_0,P}^{0*}}\right\}\\
  &\leq\frac{1}{\varepsilon^\rho}\sumn
    \Exp\left\{\|\bm{\eta}_i^{\ttheta_{s_0,P}^{0*}}\|^{2+\rho}
    I(\|\bm{\eta}_i^{\ttheta_{s_0,P}^{0*}}\|>\varepsilon)
    \Big|\Dn,{\ttheta_{s_0,P}^{0*}}\right\}\\
  &\leq\frac{1}{\varepsilon^\rho}\sumn
    \Exp\left(\|\bm{\eta}_i^{\ttheta_{s_0,P}^{0*}}\|^{2+\rho}
    \Big|\Dn,{\ttheta_{s_0,P}^{0*}}\right)
  =\frac{s_n^{1+\rho/2}}{\varepsilon^{\rho}n^{2+\rho}}\sumn
    \frac{\|\dm(Z_i,\htheta_n)\|^{2+\rho}}
    {\{(s_n\tilde{\pi}_{n,P\alpha i}^{\mvc})\wedge1\}^{1+\rho}}\\
  &\le\frac{s_n^{1+\rho/2}}{\varepsilon^{\rho}n^{2+\rho}}\sumn
    \frac{\|\dm(Z_i,\htheta_n)\|^{2+\rho}}
    {(s_n\alpha/n)^{1+\rho}}
 = \frac{1}{\alpha^{1+\rho}\varepsilon^\rho {s_n}^{\rho/2}}\onen\sumn\|\dm(Z_i,\htheta_n)\|^{2+\rho}
  =O_P({s_n}^{-\rho/2}).
\end{align*}
Thus, from the Lindeberg-Feller Central Limit Theorem \citep[cf.][]{Vaart:98}, Lemma~\ref{lem:plem1} follows. %
\end{proof}

\begin{lemma}\label{lem:poiddMr}
  Under Assumption \ref{ass:3},  for any $\bu_{s_n}=\op$, conditional on $\Dn$ and ${\ttheta_{s_0,P}^{0*}}$, 
\begin{equation*}
  \onen\sumn\frac{\nu_i\ddm(Z_i,\htheta_n+\bu_{s_n})}
  {(s_n\tilde{\pi}_{n,P\alpha i}^{\mvc})\wedge1}
  -\onen\sumn\ddm(Z_i,\htheta_n)=o_P(1).
\end{equation*}
\end{lemma}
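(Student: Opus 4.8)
The plan is to mirror the two-step structure used in Lemma~\ref{lem:ddMr} (the analogous result for the exact optimal Poisson probabilities), adapting it to the mixed probabilities $\tilde{\pi}_{n,P\alpha i}^{\mvc}$ and the truncation $(s_n\tilde{\pi}_{n,P\alpha i}^{\mvc})\wedge1$ in the denominator. The key simplification, exactly as in the proof of Lemma~\ref{lem:two-step1}, is that the defensive mixing with the uniform distribution guarantees a uniform lower bound $\tilde{\pi}_{n,P\alpha i}^{\mvc}\ge\alpha/n$, so that $(s_n\tilde{\pi}_{n,P\alpha i}^{\mvc})\wedge1\ge(s_n\alpha/n)\wedge1$ is bounded away from zero at a controllable rate. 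This lets me bound all the weights $1/\{(s_n\tilde{\pi}_{n,P\alpha i}^{\mvc})\wedge1\}$ crudely and dispense with the $\max_i(n\pi_{n,i})^{-1}=O_P(1)$ device (Assumption~\ref{ass:6}) that was needed in Lemma~\ref{lem:ddMr}.

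First I would establish, via Chebyshev's inequality conditional on $\Dn$ and $\ttheta_{s_0,P}^{0*}$, the auxiliary bound
\begin{equation*}
  \onen\sumn\frac{\nu_i\psi(Z_i)}
  {(s_n\tilde{\pi}_{n,P\alpha i}^{\mvc})\wedge1}
  =O_{P|\Dn,\ttheta_{s_0,P}^{0*}}(1),
\end{equation*}
computing its conditional mean (which equals $\onen\sumn\psi(Z_i)=\Exp\{\psi(Z)\}+\op$ since $\Exp(\nu_i|\Dn,\ttheta_{s_0,P}^{0*})=(s_n\tilde{\pi}_{n,P\alpha i}^{\mvc})\wedge1$) and bounding its conditional variance by $O_P(s_n^{-1})$ using the lower bound on the weights together with Assumption~\ref{ass:4}. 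Second, invoking the Lipschitz continuity in Assumption~\ref{ass:3}, for every $k,l$ I would bound
\begin{equation*}
  \left|\onen\sumn\frac{\nu_i\ddm_{k,l}(Z_i,\htheta_n+\bu_{s_n})}
  {(s_n\tilde{\pi}_{n,P\alpha i}^{\mvc})\wedge1}
  -\onen\sumn\frac{\nu_i\ddm_{k,l}(Z_i,\htheta_n)}
  {(s_n\tilde{\pi}_{n,P\alpha i}^{\mvc})\wedge1}\right|
  \le\|\bu_{s_n}\|\,\onen\sumn\frac{\nu_i\psi(Z_i)}
  {(s_n\tilde{\pi}_{n,P\alpha i}^{\mvc})\wedge1}=\op,
\end{equation*}
using $\bu_{s_n}=\op$ and the first step.

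Third, I would show the centered term at $\htheta_n$ is negligible: again by Chebyshev, the conditional mean of $\onen\sumn\nu_i\ddm_{k,l}(Z_i,\htheta_n)/\{(s_n\tilde{\pi}_{n,P\alpha i}^{\mvc})\wedge1\}$ equals $\onen\sumn\ddm_{k,l}(Z_i,\htheta_n)$, and its conditional variance is $O_P(s_n^{-1})$ by the weight lower bound and the bound $\onen\sumn\ddm_{k,l}^2(Z_i,\htheta_n)=O_P(1)$ from~\eqref{eq:ddmOP1}. Hence this difference is $O_{P|\Dn,\ttheta_{s_0,P}^{0*}}(s_n^{-1/2})=\op$. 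Combining the second and third steps and summing over the finitely many $k,l$ gives the claim. The main obstacle is purely technical: verifying that the truncation operation $(s_n\tilde{\pi}_{n,P\alpha i}^{\mvc})\wedge1$ does not disrupt the variance bookkeeping, i.e., that $\{(s_n\tilde{\pi}_{n,P\alpha i}^{\mvc})\wedge1\}^{-1}$ remains uniformly bounded by a quantity of order $n/(s_n\alpha)$; this follows directly from the mixing lower bound and is where the role of $\alpha>0$ is essential.
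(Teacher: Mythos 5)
Your proposal is correct and follows essentially the same route as the paper's proof: the same three steps (the $\Op$ bound on $\onen\sumn\nu_i\psi(Z_i)/\{(s_n\tilde{\pi}_{n,P\alpha i}^{\mvc})\wedge1\}$, the Lipschitz comparison at $\htheta_n+\bu_{s_n}$, and the Chebyshev mean--variance argument at $\htheta_n$ for each component $k,l$), with the same key observation that the defensive mixing forces $(s_n\tilde{\pi}_{n,P\alpha i}^{\mvc})\wedge1\ge s_n\alpha/n$, so Assumption~\ref{ass:6} is dispensed with. One trivial citation slip: the variance bound for the $\psi$-term needs only $\onen\sumn\psi^2(Z_i)=\Op$, which follows from Assumption~\ref{ass:3} (exactly as the lemma's hypothesis states), not from Assumption~\ref{ass:4}.
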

\begin{proof}
  First, using an approach similar to the one used to prove \eqref{eq:lem6Op}, we can show that given $\Dn$ and $\ttheta_{s_0,P}^{0*}$, 
\begin{align}
  \onen\sumn\frac{\nu_i\psi(Z_i)}
  {(s_n\tilde{\pi}_{n,P\alpha i}^{\mvc})\wedge1}=\Op.
\end{align}
For every $k,l=1,2,...,d$, from Lipschitz continuity, we have
\begin{align}\label{eq:lem6step1}
  \left|\onen\sumn\frac{\nu_i\ddm_{k,l}(Z_i,\htheta_n+\bu_{s_n})}
  {(s_n\tilde{\pi}_{n,P\alpha i}^{\mvc})\wedge1}
  -\onen\sumn\frac{\nu_i\ddm_{k,l}(Z_i,\htheta_n)}
  {(s_n\tilde{\pi}_{n,P\alpha i}^{\mvc})\wedge1}\right|
  \leq\onen\sumn\frac{\nu_i\psi(Z_i)\|\bu_{s_n}\|}
  {(s_n\tilde{\pi}_{n,P\alpha i}^{\mvc})\wedge1}
  =o_P(1).
\end{align}
For each $k,l=1,2,...,d$, direct calculations show that
\begin{align*}
  \Exp\left\{\onen\sumn\frac{\nu_i\ddm_{k,l}(Z_i,\htheta_n)}
  {(s_n\tilde{\pi}_{n,P\alpha i}^{\mvc})\wedge1}
  \Bigg|\Dn,{\ttheta_{s_0,P}^{0*}}\right\}
  &=\onen\sumn\ddm_{k,l}(Z_i,\htheta_n),\\
  \Var\left\{\onen\sumn\frac{\nu_i\ddm_{k,l}(Z_i,\htheta_n)}
  {(s_n\tilde{\pi}_{n,P\alpha i}^{\mvc})\wedge1}
  \Bigg|\Dn,{\ttheta_{s_0,P}^{0*}} \right\}
  &\leq \frac{1}{s_nn^2}\sumn\frac{\ddm^2_{k,l}(Z_i,\htheta_n)}
    {(s_n\tilde{\pi}_{n,P\alpha i}^{\mvc})\wedge1}
  \leq \frac{1}{\alpha s_n n}\sumn h^2(Z_i)
=O_{P}(s_n^{-1}).
\end{align*}
According to Chebyshev's inequality, we obtain
\begin{align}\label{eq:lem6step2}
  \onen\sumn\frac{\nu_i\ddm(Z_i,\htheta_n)}
  {(s_n\tilde{\pi}_{n,P\alpha i}^{\mvc})\wedge1}
  -\onen\sumn\ddm(Z_i,\htheta_n)=O_{P}({s_n}^{-1/2}).
\end{align}
Therefore, combining \eqref{eq:lem6step1} and \eqref{eq:lem6step2}, we have
\begin{equation*}%
\onen\sumn\frac{\nu_i\ddm(Z_i,\htheta_n+\bu_{s_n})}{s_n\pi_{n,P\alpha i}^{\mvc}}-\onen\sumn\ddm(Z_i,\htheta_n)=o_P(1).
\end{equation*}
\end{proof}

\begin{lemma}\label{lem:H}
  Under Assumptions \ref{ass:3} and \ref{ass:4},
  \begin{enumerate}[1)]
  \item if $\varrho_n=s_n/(bn)\rightarrow\varrho\in(0,1)$, then $H^{0*}-H_{\varrho_n}=\op$;
  \item $\Psi^{0*}-\Psi_{\varrho_n}=\op$, where
  \begin{align}
      &\Psi_{\varrho_n}
  =\onen\sumn\{\|\dm(Z_i,\htheta_n)\|\wedge H_{\varrho_n}\};
  \end{align}
\item if $s_n/(bn)\rightarrow\varrho=0$, then $\Psi^{0*}-\Psi_{\infty}=\op$.
\end{enumerate}
\end{lemma}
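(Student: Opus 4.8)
The plan is to show that every pilot-based quantity, $H^{0*}$ and $\Psi^{0*}$, shares a common probability limit with its full-data counterpart, so that the differences are $\op$. Two discrepancies must be controlled throughout: the pilot estimator $\ttheta_{s_0,P}^{0*}$ differs from $\htheta_n$, and the pilot average is taken over a subsample of random size $s_0^*$ rather than the full $n$. For the first, since the uniform pilot Poisson subsampling is unweighted, the consistency argument in the proof of Theorem~\ref{thm:2} applied with $s_0\rightarrow\infty$ gives $\ttheta_{s_0,P}^{0*}-\htheta_n=\op$, and with $\htheta_n-\btheta_0=\Op(n^{-1/2})$ also $\ttheta_{s_0,P}^{0*}-\btheta_0=\op$. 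The parameter discrepancy in each summand is then controlled exactly as in \eqref{eq:opteq2}: uniformly in $i$,
\[
  \big|\,\|\dm(Z_i^{0*},\ttheta_{s_0,P}^{0*})\|-\|\dm(Z_i^{0*},\htheta_n)\|\,\big|
  \le\|\ttheta_{s_0,P}^{0*}-\htheta_n\|\,h(Z_i^{0*}),
\]
where $\onen\sumn h^2(Z_i)=\Op$ by \eqref{eq:h(Z)OP1}. By Proposition~\ref{prop2} I may work with unconditional $\op$ throughout.

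For part 1), let $F(t)=\Pr\{\|\dm(Z,\btheta_0)\|\le t\}$ and $\xi_{1-\varrho}=F^{-1}(1-\varrho)$. Because $Z$ is continuous and $\dm$ is continuous, $F$ is continuous and, under Assumption~\ref{ass:4}, strictly increasing in a neighbourhood of $\xi_{1-\varrho}$, so the quantile functional is continuous there. I would show separately that $H_{\varrho_n}\xrightarrow{p}\xi_{1-\varrho}$ and $H^{0*}\xrightarrow{p}\xi_{1-\varrho}$, which gives $H^{0*}-H_{\varrho_n}=\op$. The first is the consistency of the upper $\varrho_n$ sample quantile of the $n$ full-data values, using $\varrho_n\rightarrow\varrho$ and Glivenko--Cantelli after replacing $\htheta_n$ by $\btheta_0$ via the displayed Lipschitz bound. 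For the second, I would invoke a Glivenko--Cantelli result uniform over $\btheta$ in a neighbourhood of $\btheta_0$ for the indicator class $\{z\mapsto I(\|\dm(z,\btheta)\|\le t)\}$, so that the pilot empirical distribution evaluated at the data-dependent $\ttheta_{s_0,P}^{0*}$ still converges uniformly to $F$ once $\ttheta_{s_0,P}^{0*}\xrightarrow{p}\btheta_0$ and $(\btheta,t)\mapsto\Pr\{\|\dm(Z,\btheta)\|\le t\}$ is continuous; quantile consistency then follows from local strict monotonicity of $F$.

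For parts 2) and 3) the truncation map $x\mapsto x\wedge H$ is bounded, $1$-Lipschitz in $x$, and monotone and $1$-Lipschitz in $H$, which stabilises the averages. For part 2) (where $\varrho\in(0,1)$, so $H_{\varrho_n}\xrightarrow{p}\xi_{1-\varrho}<\infty$), I would split $\Psi^{0*}-\Psi_{\varrho_n}$ into the effect of replacing $H^{0*}$ by $H_{\varrho_n}$ (bounded by $|H^{0*}-H_{\varrho_n}|=\op$ from part 1), the effect of replacing $\ttheta_{s_0,P}^{0*}$ by $\htheta_n$ (bounded by $\|\ttheta_{s_0,P}^{0*}-\htheta_n\|\,\onen\sumn h(Z_i)=\op$), and the pilot sampling error about the full-data average, which is $\op$ by a conditional law of large numbers since the summand $\|\dm\|\wedge H_{\varrho_n}$ is bounded. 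Part 3) is the $\varrho=0$ case, where $H^{0*}$ diverges to the upper end of the support and $\Psi_\infty=\onen\sumn\|\dm(Z_i,\htheta_n)\|$ carries no truncation; I would bound the truncation bias by the tail sum $\onen\sumn\|\dm(Z_i,\htheta_n)\|\,I\{\|\dm(Z_i,\htheta_n)\|>H^{0*}\}\le (H^{0*})^{-3}\onen\sumn\|\dm(Z_i,\htheta_n)\|^4=\Op(H^{0*})^{-3}=\op$ using Assumption~\ref{ass:4}, and then apply the same law-of-large-numbers and Lipschitz arguments as in part 2).

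The main obstacle is the quantile convergence in part 1). Unlike a textbook sample quantile, $H^{0*}$ is built from a subsample of random size $s_0^*=o(n)$ and evaluated at the data-dependent pilot estimate $\ttheta_{s_0,P}^{0*}$, so classical quantile consistency does not apply directly. The crux is establishing that the empirical distribution of the perturbed pilot values converges uniformly to the continuous $F$ near $\xi_{1-\varrho}$; a naive termwise shift of order statistics fails because $\max_i h(Z_i^{0*})$ need not be small, so I expect to need either a genuinely uniform-in-$\btheta$ Glivenko--Cantelli bound (via bracketing or a VC argument justified by the smoothness in Assumption~\ref{ass:3}) or a careful two-sided sandwiching of the perturbed empirical CDF between shifted copies of the unperturbed one, combined with the local density lower bound of $F$ at $\xi_{1-\varrho}$.
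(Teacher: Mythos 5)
Your overall architecture — a common probability limit for $H^{0*}$ and $H_{\varrho_n}$, Lipschitz control of the parameter discrepancy via \eqref{eq:opteq2} and \eqref{eq:h(Z)OP1}, and a three-way split of $\Psi^{0*}-\Psi_{\varrho_n}$ (threshold swap, parameter swap, pilot sampling error) — matches the paper's proof of parts 2) and 3) quite closely. The genuine gap is in part 1), and you have correctly identified where it is but not closed it: you reduce the claim $H^{0*}\rightarrow\zeta_\varrho$ to a uniform-in-$\btheta$ Glivenko--Cantelli statement for the class $\{z\mapsto I(\|\dm(z,\btheta)\|\le t)\}$ and then concede you do not know how to establish it under Assumptions~\ref{ass:3}--\ref{ass:4}. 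Indeed, bracketing for that class requires an anti-concentration bound of the form $\sup_{\btheta}\Pr\{\,|\|\dm(Z,\btheta)\|-t|\le\delta\}\rightarrow0$, which continuity of the law of $Z$ alone does not supply uniformly in $\btheta$, so this route cannot be completed as stated. The paper avoids uniformity altogether by a conditioning device you are missing: because the pilot is \emph{uniform} Poisson sampling, the inclusion indicators $\nu_i^0$ are independent of the data, and conditionally on the pilot subsample (hence on $\ttheta_{s_0,P}^{0*}$) the non-sampled points are exchangeable and independent of the evaluation point. Concretely, the paper sandwiches $H^{0*}$ between $\tilde{H}_{\varrho_+}$ and $\tilde{H}_{\varrho_-}$, the $\ceil{n(1-\varrho_\pm)}$-th order statistics of the full-data values $\|\dm(Z_i,\ttheta_{s_0,P}^{0*})\|$, using only a law of large numbers for $\sum_{i}\nu_{(i)}^0$; it then notes $\tilde{H}_{\varrho_\pm}$ lies between two order statistics of the \emph{excluded} points, to which classical quantile consistency for exchangeable sequences \citep{chanda1971asymptotic} applies pointwise (the $s_0^*=o(n)$ rank perturbation is negligible), giving $\tilde{H}_{\varrho_\pm}=\zeta_{\varrho_\pm}+\op$; continuity of the law of $\|\dm(Z,\btheta_0)\|$ then squeezes $H^{0*}=\zeta_\varrho+\op$, and exchangeability of the $\|\dm(Z_i,\htheta_n)\|$'s yields the same limit for $H_{\varrho_n}$. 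This is exactly the mechanism that resolves the data-dependent-evaluation-point problem you flag as the ``main obstacle,'' with no empirical-process machinery needed.

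There is also a concrete flaw in your part 3): the tail bound $\onen\sumn\|\dm(Z_i,\htheta_n)\|\,I\{\|\dm(Z_i,\htheta_n)\|>H^{0*}\}\le (H^{0*})^{-3}\onen\sumn\|\dm(Z_i,\htheta_n)\|^4$ is useful only if $H^{0*}\rightarrow\infty$ in probability, which holds when $\|\dm(Z,\btheta)\|$ is unbounded (and even then requires proof; the paper establishes $H^{0*}\wedge H_{\varrho_n}\rightarrow\infty$ as part of its argument). When $\|\dm(Z,\btheta)\|$ is bounded, $H^{0*}$ stays bounded and your bound does not vanish. The paper instead routes part 3) through $\Psi_{\infty}-\Psi_{\varrho_n}$ with the \emph{full-data} threshold $H_{\varrho_n}$, for which the exceedance count is exactly $n-\ceil{n(1-\varrho_n)}=O(n\varrho_n)$, giving the $\op$ bound $\frac{n-\ceil{n(1-\varrho_n)}}{n}\|\dm(Z,\htheta_n)\|_{(n)}$ in the bounded case and $\frac{1}{nH_{\varrho_n}}\sumn\|\dm(Z_i,\htheta_n)\|^2$ in the unbounded case. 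Relatedly, part 2) of the lemma is asserted without restriction on $\varrho$, and Theorem~\ref{thm:6} uses it with $\varrho=0$; your proof of part 2) is restricted to $\varrho\in(0,1)$, so you still need the paper's case analysis there — the bound $\{1/(H^{0*}\wedge H_{\varrho_n})+1/H^{0*}+1/H_{\varrho_n}\}\frac{1}{s_0^*}\sumn\nu_i^0\|\dm(Z_i,\ttheta_{s_0,P}^{0*})\|^2$ when $\varrho=0$ with unbounded $\|\dm\|$, and the Taylor-expansion/Markov reduction to untruncated averages when $\|\dm\|$ is bounded.
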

\begin{proof}
  Note that $H^{0*}$ is the $\ceil{s_0^*-s_0^*s_n/b/n}$-th order statistics of %
 $\|\dm(Z_i^{0*},\ttheta_{s_0,P}^{0*})\|$, $i=1, ..., s_0^*$.
  For any $\rho>0$, let $\tilde{H}_{\rho}$ be the $\ceil{n(1-\rho)}$-th order statistics of %
  $\|\dm(Z_i,\ttheta_{s_0,P}^{0*})\|$, $i=1, ..., n$.
Let $\nu_{(i)}^0=1$ if $\|\dm(Z,\ttheta_{s_0,P}^{0*})\|_{(i)}$ is included in $\|\dm(Z_1^{0*},\ttheta_{s_0,P}^{0*})\|, ..., \|\dm(Z_{s_{0}^*}^{0*},\ttheta_{s_0,P}^{0*})\|$, and $\nu_{(i)}^0=0$ otherwize. 
For any $\varrho_+>\varrho$, 
\begin{align}
  \Pr(H^{0*}\le\tilde{H}_{\varrho_+})
  =\Pr\Bigg(\sum_{i=1}^{\ceil{n(1-\varrho_+)}}\nu_{(i)}^0
  \ge\ceil{s_0^*-s_0^*s_n/b/n}\Bigg).
\end{align}
Note that
\begin{align}
  \frac{1}{s_0}\sum_{i=1}^{\ceil{n(1-\varrho_+)}}\nu_{(i)}^0
  =1-\varrho_++\op\quad\text{and}\quad
  \frac{\ceil{s_0^*-s_0^*s_n/b/n}}{s_0} =1-\varrho+\op.
\end{align}
Thus,
\begin{align}\label{eq:20}
  \Pr(H^{0*}\le\tilde{H}_{\varrho_+})\rightarrow0.
\end{align}
Similarly, we obtain that for any $\varrho_-<\varrho$, 
\begin{align}\label{eq:21}
  \Pr(H^{0*}\le\tilde{H}_{\varrho_-})\rightarrow1.
\end{align}
Note that $\tilde{H}_{\varrho_+}$ is between the $\ceil{n(1-\varrho_+)}-s_0^*$-th and the $\ceil{n(1-\varrho_+)}$-th order statistics of $\|\dm(Z_i,\ttheta_{s_0,P}^{0*})\|$'s that are not included in $\|\dm(Z_1^{0*},\ttheta_{s_0,P}^{0*})\|, ..., \|\dm(Z_{s_{0}^*}^{0*},\ttheta_{s_0,P}^{0*})\|$. The joint distribution of these $\|\dm(Z_i,\ttheta_{s_0,P}^{0*})\|$'s are exchangeable, and $s_0^*/n\rightarrow0$ in probability. Therefore, both the $\ceil{n(1-\varrho_+)}-s_0^*$-th and the $\ceil{n(1-\varrho_+)}$-th order statistics of these $\|\dm(Z_i,\ttheta_{s_0,P}^{0*})\|$'s converge to the $\varrho_+$-quantile of the distribution of $\|\dm(Z_i,\btheta_0)\|$ in probability \citep{chanda1971asymptotic}, where %
$\btheta_0=\arg\max_\btheta\Exp\{\m(Z,\btheta)\}$. As a result, $\tilde{H}_{\varrho_+}$ converge in probability to the $\varrho_+$-quantile of the distribution of $\|\dm(Z,\btheta_0)\|$, say $\zeta_{\varrho+}$. Similarly, $\tilde{H}_{\varrho_-}$ converge in probability to the $\varrho_-$-quantile of the distribution of $\|\dm(Z,\btheta_0)\|$, say $\zeta_{\varrho-}$. Thus, \eqref{eq:20} and \eqref{eq:21} together imply that for any $\epsilon>0$,
\begin{align}
  \Pr(\zeta_{\varrho_+}-\epsilon< H^{0*}
  <\zeta_{\varrho_-}+\epsilon)\rightarrow1.
\end{align}
Since the distribution of $Z$ is continuous and so is that of $\|\dm(Z,\btheta_0)\|$, we can choose $\varrho_+$ and $\varrho_-$ close enough to $\varrho$ such that $\zeta_{\varrho_-}-\zeta_{\varrho}<\epsilon$ and $\zeta_{\varrho}-\zeta_{\varrho_+}<\epsilon$, which implies that
\begin{align}
  \Pr(\zeta_{\varrho}-2\epsilon< H^{0*}
  <\zeta_{\varrho}+2\epsilon)\rightarrow1,
\end{align}
for any $\epsilon$. Thus, $H^{0*}=\zeta_{\varrho}+\op$. 
Since $\|\dm(Z_1,\htheta_n)\|, ..., \|\dm(Z_n,\htheta_n)\|$ are exchangeable, $H_{\varrho_n}=\zeta_{\varrho}+\op$, where $H_{\varrho_n}$ is the $\ceil{n(1-\varrho_n)}$-th order statistics of %
$\|\dm(Z_i,\htheta_n)\|$, $i=1, ..., n$. Therefore, $H^{0*}-H_{\varrho_n}=\op$.

Now we prove 2) of Lemma~\ref{lem:H}. If $\varrho=0$ and $\|\dm(Z,\btheta)\|$ is bounded, then 
\begin{align*}
  \Psi^{0*}
  &=\sum_{i=1}^{\ceil{s_0^*-s_0^*s_n/b/n}}
  \frac{\|\dm(Z^{0*},\ttheta_{s_0,P}^{0*})\|_{(i)}}{s_0^*}
  +\frac{s_0^*-\ceil{s_0^*-s_0^*s_n/b/n}}{s_0^*}H^{0*}\\
  &=\sum_{i=1}^{s_0^*}\frac{\|\dm(Z_i^{0*},\ttheta_{s_0,P}^{0*})\|}{s_0^*}
    +\op,
\end{align*}
and similarly,
\begin{align*}
  \Psi_{\varrho_n}=\onen\sumn\|\dm(Z_i,\htheta_n)\|+\op.
\end{align*}
Thus the proof reduce to prove that
\begin{align*}
  \sum_{i=1}^{s_0^*}\frac{\|\dm(Z_i^{0*},\ttheta_{s_0,P}^{0*})\|}{s_0^*}
  =\onen\sumn\|\dm(Z_i,\htheta_n)\|+\op,
\end{align*}
which can be proved by Taylor's expansion and Markov's inequality. 
To prove other cases, let $\nu_i^0=1$ if the $i$-th observation is included in the pilot subsample and $\nu_i^0=0$ otherwise; then $\Psi^{0*}$ can be written as 
\begin{align*}
  \Psi^{0*}=\frac{1}{s_0^*}\sumn\nu_i^0
  \{\|\dm(Z_i,\ttheta_{s_0,P}^{0*})\|\wedge H^{0*}\}.
\end{align*}
Define
\begin{align*}
  \Psi^{0*}_{H_{\varrho_n}}
  &=\frac{1}{s_0^*}\sumn\nu_i^0\{\|\dm(Z_i,\ttheta_{s_0,P}^{0*})\|\wedge
    H_{\varrho_n}\}\quad\text{and}\quad
  \Psi^{0*}_{\htheta_n}
  =\frac{1}{s_0^*}\sumn\nu_i^0\{\|\dm(Z_i,\htheta_n)\|\wedge
    H_{\varrho_n}\}.
\end{align*}
If $\varrho>0$, then
\begin{align*}
  |\Psi^{0*}-\Psi^{0*}_{H_{\varrho_n}}|
  &=\frac{1}{s_0^*}\sumn\nu_i^0\Big|\|\dm(Z_i,\ttheta_{s_0,P}^{0*})\|\wedge
    H^{0*}-\|\dm(Z_i,\ttheta_{s_0,P}^{0*})\|\wedge H_{\varrho_n}\Big|\\
  &\le\frac{|H^{0*}-H_{\varrho_n}|}{s_0^*}\sumn\nu_i^0
    I\Big\{\|\dm(Z_i,\ttheta_{s_0,P}^{0*})\|\ge
    H^{0*}\wedge H_{\varrho_n}\Big\}
    \le|H^{0*}-H_{\varrho_n}|=\op.
\end{align*}

If $\varrho=0$ and $\|\dm(Z,\btheta)\|$ is unbounded, then $H^{0*}\wedge H_{\varrho_n}\rightarrow\infty$ in probability. Under Assumptions \ref{ass:3} and \ref{ass:4}, it can be shown that $\frac{1}{s_0^*}\sumn\nu_i^0\|\dm(Z_i,\ttheta_{s_0,P}^{0*})\|^2=\OpD(1)$. Thus,
\begin{align}
  |\Psi^{0*}-\Psi^{0*}_{H_{\varrho_n}}|
  &\le\frac{1}{s_0^*}\sumn\nu_i^0\|\dm(Z_i,\ttheta_{s_0,P}^{0*})\|
    I\Big\{\|\dm(Z_i,\ttheta_{s_0,P}^{0*})\|\ge
    H^{0*}\wedge H_{\varrho_n}\Big\}\notag\\
  &+\frac{H^{0*}}{s_0^*}\sumn\nu_i^0
    I\Big\{\|\dm(Z_i,\ttheta_{s_0,P}^{0*})\|\ge H^{0*}\Big\}
  +\frac{H_{\varrho_n}}{s_0^*}\sumn\nu_i^0
    I\Big\{\|\dm(Z_i,\ttheta_{s_0,P}^{0*})\|\ge H_{\varrho_n}\Big\}\notag\\
  &\le\Bigg\{\frac{1}{H^{0*}\wedge H_{\varrho_n}}
    +\frac{1}{H^{0*}}+\frac{1}{H_{\varrho_n}}\Bigg\}
    \frac{1}{s_0^*}\sumn\nu_i^0\|\dm(Z_i,\ttheta_{s_0,P}^{0*})\|^2
    =\op.
\end{align}
Furthermore, we can show that
\begin{align*}
  |\Psi^{0*}_{H_{\varrho_n}}-\Psi^{0*}_{\htheta_n}|
  &\le\frac{1}{s_0^*}\sumn\nu_i^0\{\|\dm(Z_i,\ttheta_{s_0,P}^{0*})\|
    -\|\dm(Z_i,\htheta_n)\|\}\\
  &\le\frac{\|\htheta_n-\ttheta_{s_0,P}^{0*}\|}{s_0^*}\sumn\nu_i^0
  h(Z_i)=\op
\end{align*}
and 
\begin{align*}
  |\Psi^{0*}_{\htheta_n}-\Psi_{\varrho_n}|=\op,
\end{align*}
where the last two $\op$ are obtained by mean and variance calculations under the conditional distribution of $\nu_i^0$'s. Thus, we have that 
\begin{align}
  |\Psi^{0*}-\Psi_{\varrho_n}|=\op. 
\end{align}

With 2) of Lemma~\ref{lem:H} proved, in order to prove 3), we only need to show that $\Psi_{\infty}-\Psi_{\varrho_n}=\op$ if $s_n/(bn)\rightarrow\varrho=0$. This is true because if $\|\dm(Z,\btheta)\|$ is bounded, then
\begin{align*}
  |\Psi_{\infty}-\Psi_{\varrho_n}|
  &\le\onen\sumn\Big|\|\dm(Z_i,\htheta_n)\|-
    \|\dm(Z_i,\htheta_n)\|\wedge H_{\varrho_n}\Big|\\
  &\le\frac{n-\ceil{n(1-\varrho_n)}}{n}\|\dm(Z,\htheta_n)\|_{(n)}
    =\op;
\end{align*}
otherwise,
\begin{align*}
  |\Psi_{\infty}-\Psi_{\varrho_n}|
  &\le\onen\sumn\Big|\|\dm(Z_i,\htheta_n)\|-
    \|\dm(Z_i,\htheta_n)\|\wedge H_{\varrho_n}\Big|\\
  &\le\onen\sumn\|\dm(Z_i,\htheta_n)\|I\Big\{
    \|\dm(Z_i,\htheta_n)\|\ge H_{\varrho_n}\Big\}\\
  &\le\frac{1}{nH_{\varrho_n}}\sumn\|\dm(Z_i,\htheta_n)\|^2
    =\op.
\end{align*}

\end{proof}

\begin{proof}[Proof of Theorem~\ref{thm:6}]
  For Algorithm \ref{alg:4}, $M_{P\alpha}^*(\btheta)$ can be written as
\begin{align*}
  M_{P\alpha}^*(\btheta)%
  &=\onen\sumn\frac{\nu_im(Z_i,\btheta)}
    {(s_n\tilde{\pi}_{n,P\alpha i}^{\mvc})\wedge1}.
\end{align*}
Denote
\begin{align*}
  \gamma_{{\ttheta_{s_0,P}^{0*}} P}(\bu)
  &=s_nM_{P\alpha}^*(\htheta_n+\bu/\sqrt{s_n})-s_nM_{P\alpha}^*(\htheta_n).
\end{align*}
Under Assumption \ref{ass:2}, $\sqrt{s_n}({\ttheta_{s_n,P}^\alpha}-\htheta_n)$ is the unique maximizer of $\gamma_{{\ttheta_{s_0,P}^{0*}} P}(\bu)$. 
By Taylor's expansion,
\begin{align*}
  \gamma_{{\ttheta_{s_0,P}^{0*}} P}(\bu)
  &=\sqrt{s_n}\bu^T\dM_{P\alpha}^*(\htheta_n)
    +\frac{\bu^T\ddM_{P\alpha}^*(\htheta_n+\bau/\sqrt{s_n})\bu}{2}
\end{align*}
where $\bau$ lies between $\0$ and $\bu$.
From Lemma \ref{lem:plem1}, $\sqrt{s_n}\dM_{P\alpha}^*(\htheta_n)$ is stochastically bounded in conditional probability given $\Dn$ and $\ttheta_{s_0,P}^{0*}$; from Lemma \ref{lem:poiddMr}, conditional on $\Dn$ and ${\ttheta_{s_0,P}^{0*}}$, $\ddM_{P\alpha}^*(\htheta_n+\bau/\sqrt{s_n})-\ddM_n(\htheta_n)=\op$ and $\ddM_n(\htheta_n)$ converges to a positive-definite matrix.  
Thus, from the Basic Corollary in page 2 of \cite{hjort2011asymptotics}, the minimizer of $s_n\gamma(\bu), \sqrt{s_n}(\ttheta_{s_n,P}^\alpha -\htheta_n)$, satisfies that
\begin{align}
  \sqrt{s_n}(\ttheta_{s_n,P}^\alpha -\htheta_n)
  =\ddM_n^{-1}(\htheta_n)\sqrt{s_n}\dM^*_P(\htheta_n)+\op,
\end{align}
which implies that
\begin{align}
  \sqrt{s_n}\{\Lambda_{n,P}^\alpha(\ttheta_{s_0,P}^{0*})\}^{-1/2}
  \ddM_n(\htheta_n)(\ttheta_{s_n,P}^\alpha -\htheta_n)\rightarrow \Nor(\0,\I),
\end{align}
in conditional distribution given $\Dn$ and $\ttheta_{s_0,P}^{0*}$.

Next, we will check the distance between $\Lambda_{n,P}^\alpha(\ttheta_{s_0,P}^{0*})$ and $\Lambda_{n,P}^\alpha(\htheta_n)$. 
Let $\Lambda_{P\varrho_n}^{\alpha}(\htheta_n)$ have the same expression as $\Lambda_{n,P}^{\alpha}(\htheta_n)$ in \eqref{eq:30} except that $\pi_{n,Pi}^{\mvc}(\htheta_n)$ in the denominator is replaced by
\begin{equation*}
  \pi_{n,P\alpha i}^{\varrho_n}(\htheta_n)
  =(1-\alpha)\pi_{n,Pi}^{\varrho_n}(\htheta_n)+\alpha\onen
  \quad\text{ with }\quad
  \pi_{n,Pi}^{\varrho_n}
  =\frac{\|\dm(Z_i,\htheta_n)\|\wedge H_{\varrho_n}}
  {\sumjn\{\|\dm(Z_j,\htheta_n)\|\wedge H_{\varrho_n}\}}.
\end{equation*}
We have that 
\begin{align}
  \|\Lambda_{n,P}^\alpha(\ttheta_{s_0,P}^{0*})
    -\Lambda_{P\varrho_n}^\alpha(\htheta_n)\|\notag
  &\leq\frac{s_n}{n^2}\sumn\left|\frac{\|\dm(Z_i,\htheta_n)\|^2}
    {\{s_n\tilde{\pi}_{n,P\alpha i}^{\mvc}(\ttheta_{s_0,P}^{0*})\}\wedge1}
    -\frac{\|\dm(Z_i,\htheta_n)\|^2}
    {\{s_n\pi_{n,P\alpha i}^{\varrho_n}(\htheta_n)\}\wedge1}
    \right|\notag\\
  &=\frac{s_n}{n^2}\sumn\|\dm(Z_i,\htheta_n)\|^2\left|\frac{1}
    {\{s_n\tilde{\pi}_{n,P\alpha i}^{\mvc}(\ttheta_{s_0,P}^{0*})\}\wedge1}
    -\frac{1}{\{s_n\pi_{n,P\alpha i}^{\varrho_n}(\htheta_n)\}\wedge1}
    \right|\notag\\
  &=\frac{s_n}{n^2}\sumn\|\dm(Z_i,\htheta_n)\|^2\left|
    \frac{\{s_n\tilde{\pi}_{n,P\alpha i}^{\mvc}(\ttheta_{s_0,P}^{0*})\}\wedge1
    -\{s_n\pi_{n,P\alpha i}^{\varrho_n}(\htheta_n)\}\wedge1}
    {[\{s_n\tilde{\pi}_{n,P\alpha i}^{\mvc}(\ttheta_{s_0,P}^{0*})\}\wedge1]
    [\{s_n\pi_{n,P\alpha i}^{\varrho_n}(\htheta_n)\}\wedge1]}
    \right|\notag\\
  &<\frac{1}{\alpha^2}\sumn\|\dm(Z_i,\htheta_n)\|^2\left|
    \tilde{\pi}_{n,Pi}^{\mvc}(\ttheta_{s_0,P}^{0*})
    -\pi_{n,Pi}^{\varrho_n}(\htheta_n)\right|\label{eq:70}
\end{align}
If $\varrho>0$, then from
\begin{align*}
  &n\left|\tilde{\pi}_{n,Pi}^{\mvc}(\ttheta_{s_0,P}^{0*})
    -\pi_{n,Pi}^{\varrho_n}(\htheta_n)\right|\notag\\
  &=\left|\frac{\|\dm(Z_i,\ttheta_{s_0,P}^{0*})\|\wedge H^{0*}}
    {\Psi^{0*}_{\varrho_n}}
    -\frac{\|\dm(Z_i,\htheta_n)\|\wedge H_{\varrho_n}}
    {\Psi_{\varrho_n}}\right|\notag\\
  &\le\left|
    \frac{\|\dm(Z_i,\ttheta_{s_0,P}^{0*})\|\wedge H^{0*}
    -\|\dm(Z_i,\htheta_n)\|\wedge H_{\varrho_n}}
    {\Psi^{0*}_{\varrho_n}}\right|
    +\{\|\dm(Z_i,\htheta_n)\|\wedge H_{\varrho_n}\}\left|
    \frac{\Psi^{0*}_{\varrho_n}-\Psi_{\varrho_n}}
    {\Psi^{0*}_{\varrho_n}\Psi_{\varrho_n}}\right|\notag\\
  &\le\frac{\Big|\|\dm(Z_i,\ttheta_{s_0,P}^{0*})\|
    -\|\dm(Z_i,\htheta_n)\|\Big|}{\Psi^{0*}_{\varrho_n}}
    +\frac{\big|H^{0*}-H_{\varrho_n}\big|}
    {\Psi^{0*}_{\varrho_n}}
    +\{\|\dm(Z_i,\htheta_n)\|\wedge H_{\varrho_n}\}
    \frac{\big|\Psi^{0*}_{\varrho_n}-\Psi_{\varrho_n}\big|}
    {\Psi^{0*}_{\varrho_n}\Psi_{\varrho_n}},
\end{align*}
we have that
\begin{align*}
  &\|\Lambda_{n,P}^\alpha(\ttheta_{s_0,P}^{0*})
    -\Lambda_{P\varrho_n}^\alpha(\htheta_n)\|\notag\\
  &<\frac{1}{\alpha^2}\sumn\|\dm(Z_i,\htheta_n)\|^2\left|
    \tilde{\pi}_{n,Pi}^{\mvc}(\ttheta_{s_0,P}^{0*})
    -\pi_{n,Pi}^{\varrho_n}(\htheta_n)\right|\\
  &\le\frac{1}{\alpha^2\Psi^{0*}_{\varrho_n}}
    \sumn\|\dm(Z_i,\htheta_n)\|^2
    \Big|\|\dm(Z_i,\ttheta_{s_0,P}^{0*})\|-\|\dm(Z_i,\htheta_n)\|\Big|\\
  &\quad+\frac{\big|H^{0*}-H_{\varrho_n}\big|}
    {\alpha^2\Psi^{0*}_{\varrho_n}}
    \sumn\|\dm(Z_i,\htheta_n)\|^2
    +\frac{\big|\Psi^{0*}_{\varrho_n}-\Psi_{\varrho_n}\big|}
    {\alpha^2\Psi^{0*}_{\varrho_n}\Psi_{\varrho_n}}
    \sumn\|\dm(Z_i,\htheta_n)\|^3=\op,
\end{align*}
by \eqref{eq:Delta1} and Lemma~\ref{lem:H}. 

If $\varrho=0$, then, 
\begin{align}
  &n\left|\tilde{\pi}_{n,Pi}^{\mvc}(\ttheta_{s_0,P}^{0*})
    -\pi_{n,Pi}^{\varrho_n}(\htheta_n)\right|\notag\\
  &\le\frac{\Big|\|\dm(Z_i,\ttheta_{s_0,P}^{0*})\|\wedge H^{0*}
    -\|\dm(Z_i,\htheta_n)\|\wedge H_{\varrho_n}\Big|}
    {\Psi^{0*}_{\varrho_n}}
    +\|\dm(Z_i,\htheta_n)\|\left|
    \frac{\Psi^{0*}_{\varrho_n}-\Psi_{\varrho_n}}
    {\Psi^{0*}_{\varrho_n}\Psi_{\varrho_n}}\right|\notag\\
  &\le\frac{\Big|\|\dm(Z_i,\ttheta_{s_0,P}^{0*})\|
    -\|\dm(Z_i,\htheta_n)\|\Big|}{\Psi^{0*}_{\varrho_n}}
   +\|\dm(Z_i,\htheta_n)\|
    \frac{\big|\Psi^{0*}_{\varrho_n}-\Psi_{\varrho_n}\big|}
    {\Psi^{0*}_{\varrho_n}\Psi_{\varrho_n}}\notag\\
  &\quad+\frac{\|\dm(Z_i,\ttheta_{s_0,P}^{0*})\|}{\Psi^{0*}_{\varrho_n}}
    I\Big\{\|\dm(Z_i,\htheta_n)\|\ge H_{\varrho_n}\Big\}
  +\frac{H_{\varrho_n}}{\Psi^{0*}_{\varrho_n}}
    I\Big\{\|\dm(Z_i,\htheta_n)\|\ge H_{\varrho_n}\Big\}\notag\\
  &\quad+\frac{\|\dm(Z_i,\htheta_n)\|}{\Psi^{0*}_{\varrho_n}}
    I\Big\{\|\dm(Z_i,\ttheta_{s_0,P}^{0*})\|\ge H^{0*}\Big\}
  +\frac{H^{0*}}{\Psi^{0*}_{\varrho_n}}
    I\Big\{\|\dm(Z_i,\ttheta_{s_0,P}^{0*})\|\ge H^{0*}\Big\}\notag\\
  &\equiv\Delta_{3i}+\Delta_{4i}+\Delta_{5i}+\Delta_{6i}
    +\Delta_{7i}+\Delta_{8i}.\label{eq:22}
\end{align}
From \eqref{eq:Delta1} and Lemma \ref{lem:H}, we know that
\begin{align}\label{eq:23}
  &\onen\sumn\|\dm(Z_i,\htheta_n)\|^2\Delta_{3i}=\op
    \quad\text{and}\quad
  \onen\sumn\|\dm(Z_i,\htheta_n)\|^2\Delta_{4i}=\op.
\end{align}
Note that
\begin{align*}
  &\onen\sumn\|\dm(Z_i,\htheta_n)\|^2\|\dm(Z_i,\ttheta_{s_0,P}^{0*})\|
    I\Big\{\|\dm(Z_i,\htheta_n)\|\ge H_{\varrho_n}\Big\}\\
  &\le\bigg\{\onen\sumn\|\dm(Z_i,\htheta_n)\|^4\bigg\}^{\frac{1}{2}}
  \bigg\{\onen\sumn\|\dm(Z_i,\ttheta_{s_0,P}^{0*})\|^4\bigg\}^{\frac{1}{4}}
  \bigg[\onen\sumn I\Big\{\|\dm(Z_i,\htheta_n)\|\ge
    H_{\varrho_n}\Big\}\bigg]^{\frac{1}{4}}\\
  &=\op,
\end{align*}
because
\begin{align*}
  &\onen\sumn I\big\{\|\dm(Z_i,\htheta_n)\|\ge H_{\varrho_n}\big\}=\op, \quad \onen\sumn\|\dm(Z_i,\htheta_n)\|^4=\Op, \\
  &\quad\text{and}\quad
\onen\sumn\|\dm(Z_i,\ttheta_{s_0,P}^{0*})\|^4=\Op.
\end{align*}
Thus, 
\begin{align}\label{eq:24}
  &\onen\sumn\|\dm(Z_i,\htheta_n)\|^2\Delta_{5i}=\op.
\end{align}
If $\|\dm(Z,\btheta)\|$ is bounded, then
\begin{equation*}
  \frac{H_{\varrho_n}}{n}\sumn\|\dm(Z_i,\htheta_n)\|^2
    I\Big\{\|\dm(Z_i,\htheta_n)\|\ge H_{\varrho_n}\Big\}
    \le\frac{n-\ceil{n(1-\varrho_n)}}{n}\|\dm(Z,\htheta_n)\|_{(n)}^3
    =\op;
\end{equation*}
otherwise
\begin{align*}
  \frac{H_{\varrho_n}}{n}\sumn\|\dm(Z_i,\htheta_n)\|^2
    I\Big\{\|\dm(Z_i,\htheta_n)\|\ge H_{\varrho_n}\Big\}
  \le\frac{1}{nH_{\varrho_n}}\sumn\|\dm(Z_i,\htheta_n)\|^4
  =\op.
\end{align*}
Thus we know that
\begin{align}\label{eq:25}
  &\onen\sumn\|\dm(Z_i,\htheta_n)\|^2\Delta_{6i}=\op.
\end{align}
Similarly, we can obtain that
\begin{align}\label{eq:26}
  &\onen\sumn\|\dm(Z_i,\htheta_n)\|^2\Delta_{7i}=\op
    \quad\text{and}\quad
  \onen\sumn\|\dm(Z_i,\htheta_n)\|^2\Delta_{8i}=\op.
\end{align}
Combining \eqref{eq:70}, \eqref{eq:22}, \eqref{eq:23}, \eqref{eq:24}, \eqref{eq:25}, and \eqref{eq:26}, we know that 
\begin{align*}
  &\|\Lambda_{n,P}^\alpha(\ttheta_{s_0,P}^{0*})
    -\Lambda_{P\varrho_n}^\alpha(\htheta_n)\|=\op.
\end{align*}

To finish the proof for the case of $\varrho=0$, we only need to show that 
$\|\Lambda_{P\varrho_n}^\alpha(\htheta_n)-\Lambda_R^\alpha(\htheta_n)\|=\op$. Let $\Psi_{\infty}=\onen\sumn\{\|\dm(Z_i,\htheta_n)\|\}$. We notice that
\begin{align}
  &n\left|\pi_{n,Pi}^{\varrho_n}(\htheta_n)
    -\pi_{n,Ri}^{\mvc}(\htheta_n)\right|\notag\\
  &\le\frac{\Big|\|\dm(Z_i,\htheta_n)\|\wedge H_{\varrho_n}
    -\|\dm(Z_i,\htheta_n)\|\Big|}{\Psi_{\varrho_n}}
    +\|\dm(Z_i,\htheta_n)\|\left|
    \frac{\Psi_{\varrho_n}-\Psi_{\infty}}
    {\Psi_{\varrho_n}\Psi_{\infty}}\right|\notag\\
  &\le\frac{\|\dm(Z_i,\htheta_n)\|}{\Psi_{\varrho_n}}
    I\Big\{\|\dm(Z_i,\htheta_n)\|\ge H_{\varrho_n}\Big\}
   +\|\dm(Z_i,\htheta_n)\|
    \frac{\big|\Psi_{\varrho_n}-\Psi_{\infty}\big|}
    {\Psi_{\varrho_n}\Psi_{\infty}}\notag\\
  &\le\frac{\|\dm(Z_i,\htheta_n)\|^2}{\Psi_{\varrho_n}H_{\varrho_n}}
   +\|\dm(Z_i,\htheta_n)\|
    \frac{\big|\Psi_{\varrho_n}-\Psi_{\infty}\big|}
    {\Psi_{\varrho_n}\Psi_{\infty}}
  \equiv\Delta_{9i}+\Delta_{10i}.
\end{align}
With this result, it can be shown that 
\begin{align*}
  &\onen\sumn\|\dm(Z_i,\htheta_n)\|^2\Delta_{9i}=\op
    \quad\text{and}\quad
  &\onen\sumn\|\dm(Z_i,\htheta_n)\|^2\Delta_{10i}=\op,
\end{align*}
which indicates that $\|\Lambda_{P\varrho_n}^\alpha(\htheta_n)-\Lambda_R^\alpha(\htheta_n)\|=\op$.

From Slutsky's theorem, we know that given $\Dn$ and ${\ttheta_{s_0,P}^{0*}}$, as $s_0$, $s_n$, and $n$ go to infinity,
\begin{align*}
  \sqrt{s_n}\{V_{n,P}^\alpha(\htheta_n)\}^{-1/2}(\ttheta_{s_n,P}^\alpha-\htheta_n)\rightarrow \Nor\left(\0, \I\right),
\end{align*}
in conditional distribution. %
\end{proof}

\begin{proof}[Proof of Remark~\ref{remark8}]
  Since $\Lambda_R^\opt(\htheta_n)$ has the minimum trace among all choices of sampling probabilities, if $\alpha\neq 0$ then $\tr\{\Lambda_R^\opt(\htheta_n)\} < \tr\{\Lambda_R^{\alpha}(\htheta_n)\}$. 
On the other hand, 
\begin{align*}
\tr\{\Lambda^{\alpha}_R(\htheta_n)\}
  =\frac{1}{n^2}\sumn\frac{\|\dm(Z_i,\htheta_n)\|^2}
  {(1-\alpha)\pi_{n,i}^{R\mvc}+\alpha\onen}
  &<\frac{1}{n^2}\sumn\frac{\|\dm(Z_i,\htheta_n)\|^2}
    {(1-\alpha)\pi_{n,i}^{R\mvc}}\\
&=\frac{1}{(1-\alpha)n^2}\left\{\sumn \|\dm(Z_i,\htheta_n)\|\right\}^2
=\frac{\tr_{opt}\{\Lambda_{n,R}(\htheta_n)\}}{1-\alpha},
\end{align*}
and this finishes the proof for $\Lambda_R^{\alpha}(\htheta_n)$ from subsampling with replacement. For $\Lambda_{n,P}^{\alpha}(\htheta_n)$ from Poisson subsampling, the proof is similar.

\end{proof}

\section{Additional examples on optimal structural results}
\label{sec:addit-exampl-optim}
\begin{example}[Least-squares]\normalfont
  \label{example2}
Consider least-squares estimator
\begin{equation*}
  \htheta_n=\arg\min_\btheta\sumn\{y_i-g(\x_i,\btheta)\}^2,
\end{equation*}
where $y_i$ is the response, $\x_i$ is the covariate, %
and $g(\x_i,\btheta)$ is a smooth function.
The least-squares estimator of $\btheta$
 can be presented in our framework by letting
$Z_i=(\x_i, y_i)$ and defining %
\begin{equation*}
  m(Z_i,\btheta)=-0.5\{y_i-g(\x_i,\btheta)\}^2.
\end{equation*}
From direct calculation, we have
\begin{align}\label{eq:34}
  \dm(Z_i,\htheta_n)
  &=\hat\varepsilon_i\dot{g}(\x_i,\htheta_n), \quad\text{ and }\quad
  \ddm(Z_i,\htheta_n)
  =\hat\varepsilon_i\ddot{g}(\x_i,\htheta_n)
  -\dot{g}(\x_i,\htheta_n)\dot{g}\tp(\x_i,\htheta_n),%
\end{align}
where $\hat\varepsilon_i=y_i-g(\x_i,\htheta_n)$, $\dot{g}(\x_i,\htheta_n)$ and $\ddot{g}(\x_i,\htheta_n)$ are the gradient and Hessian matrix of $g(\x_i,\btheta)$, respectively, evaluated at $\htheta_n$. Note that $\onen\sumn\hat\varepsilon_i\ddot{g}(\x_i,\htheta_n)$ is a small term, so there is no need to calculate the Hessian matrix $\ddot{g}(\x_i,\htheta_n)$, and $\ddM_n(\htheta_n)$ %
can be replaced by
\begin{equation}\label{eq:35}
    \ddM_n^a(\htheta_n)=
    -\onen\sumn\dot{g}(\x_i,\htheta_n)\dot{g}\tp(\x_i,\htheta_n).
\end{equation}
From~\eqref{eq:34} and~\eqref{eq:35}, we obtain optimal sampling probabilities by using
\begin{align}\label{eq:36}
  \|\dm(Z_i,\htheta_n)\|
  &=|\hat\varepsilon_i|\|\dot{g}(\x_i,\htheta_n)\|,
  \quad\text{ or}\quad%
   \|\dm(Z_i,\htheta_n)\|_L
  =n|\hat\varepsilon_i|\Big\|L\{\ddM_n^a(\htheta_n)\}^{-1}
  \dot{g}(\x_i,\htheta_n)\Big\|,
\end{align}
to replace $\|\dm(Z_i,\htheta_n)\|$ %
in Theorems~\ref{thm:3} and \ref{thm:4} for different subsampling procedures.

Specifically for ordinary least-squares (OLS) in linear regression, $g(\x_i,\btheta)=\x_i\tp\btheta$, $\dot{g}(\x_i,\htheta_n)=\x_i$, and $\ddot{g}(\x_i,\htheta_n)=\0$. Therefore, the expression in \eqref{eq:36} is simplified to
\begin{align}\label{eq:37}
  \|\dm(Z_i,\htheta_n)\|
  &=|\hat\varepsilon_i|\|\x_i\|,
  \quad\text{ or}\quad
   \|\dm(Z_i,\htheta_n)\|_L
    =n|\hat\varepsilon_i|\big\|L(\X\X\tp)^{-1}\x_i\big\|,
\end{align}
where $\X=(\x_1, ..., \x_n)\tp$.

With $\|\dm(Z_i,\htheta_n)\|=|\hat\varepsilon_i|\|\x_i\|$
inserted into~\eqref{eq:SSPmMSE}, the sampling probabilities reduce to gradient-based sampling probabilities \citep{Zhu2016}. Furthermore, %
if we take $L=\{-n\ddM_n(\htheta_n)\}^{1/2}=(\X\tp\X)^{1/2}$ in \eqref{eq:37}, the optimal probabilities for subsampling with replacement satisfy that
\begin{equation}\label{eq:39}
  \pi_{n,Ri}^{\mvc}\propto{|\hat\varepsilon_i|\sqrt{h_i}},%
  \quad i=1, ..., n,
\end{equation}
where $h_i$'s are statistical leverage scores of $\x_i$'s, i.e., diagonal elements of $\X(\X\tp\X)^{-1}\X\tp$.
This clearly shows the connection between leverage scores and the L optimality. 

Form \eqref{eq:39} and Theorem~\ref{thm:4}, optimal probabilities for Poisson subsampling and subsampling with replacement differ if there are data points such that $\frac{s_n}{n}|\hat\varepsilon_i|\sqrt{h_i}>\onen\sumjn|\hat\varepsilon_j|\sqrt{h_j}$. This is more likely to happen if $|\hat\varepsilon_i|$'s or $\sqrt{h_i}$'s are more nonuniform. %
\cite{yang2015explicit} showed that if statistical leverage scores are very nonuniform, then using the square roots of statistical leverage scores to construct subsampling probabilities yields better approximation than using the original leverage scores. An intuitive explanation for their conclusion is that taking score roots on leverage scores has some shrinkage effect on the resulting probabilities toward the uniform subsampling probability. Our results echos their conclusion, and further indicates that for optimal Poisson subsampling it may be necessary to perform truncation for high leverage scores. 

\end{example}

\begin{example}[Generalized linear models]\normalfont
Let $y_i$ be the response and $\x_i$ be the corresponding covariate. A generalized linear model (GLM) assumes that the conditional mean of the response $y_i$ given the covariate $\x_i$, $\Exp(y_i|\x_i)$, satisfies
  \begin{equation*}
    g\{\Exp(y_i|\x_i)\}=\x_i\tp\bbeta,
  \end{equation*}
  where $g$ is the link function, $\x_i\tp\bbeta$ is the linear predictor, and $\bbeta$ is the regression coefficient. For most of the commonly used GLMs, it is assumed that the distribution of the response $y_i$ given the covariate $\x_i$ belongs to the exponential family, namely,
\begin{equation*}
  f(y_i|\x_i;\bbeta,\phi)=a(y_i,\phi)\exp\Big[
  \frac{y_ib(\x_i\tp\bbeta)-c(\x_i\tp\bbeta)}{\phi}\Big],
\end{equation*}
where $a$, $b$ and $c$ are known scalar functions, and $\phi$ is the dispersion parameter. In the framework of GLM. If the link function $g$ is selected such that $b$ is the identity function, i.e., $b(\x_i\tp\bbeta)=\x_i\tp\bbeta$, then the link function is called the canonical link. With a canonical link function, $g\{\Exp(y_i|\x_i)\}=c'(\x_i\tp\bbeta)$ where $c'$ is the derivative function of $c$.

Let $Z_i=(\x_i, y_i)$. If both the regression coefficient $\bbeta$ and the dispersion parameter $\phi$ are of interest, then let $\btheta=(\bbeta\tp,\phi)\tp$. The MLE of $\btheta$ corresponds to
\begin{equation*}
  m(Z_i,\btheta)
  =\frac{y_ib(\x_i\tp\bbeta)-c(\x_i\tp\bbeta)}{\phi}
  +\log\{a(y_i,\phi)\}.
\end{equation*}
If $\bbeta$ is the only parameter of interest, then $\btheta=\bbeta$, and the MLE of $\btheta$ corresponds to
\begin{equation*}
  m(Z_i,\btheta)=y_ib(\x_i\tp\bbeta)-c(\x_i\tp\bbeta).
\end{equation*}
For this case, direct calculations give us that
\begin{align}\label{eq:41}
  \dm(Z_i,\btheta)
  =\{y_ib'(\x_i\tp\bbeta)-c'(\x_i\tp\bbeta)\}\x_i
  \ \text{and}\ \
  \ddm(Z_i,\btheta)
  =\{y_ib''(\x_i\tp\bbeta)-c''(\x_i\tp\bbeta)\}\x_i\x_i\tp,
\end{align}
where $b'$ and $b''$ are the first and second derivative functions of $b$, and and $c''$ is the second derivative function of $c$. Thus, optimal sampling probabilities under the L-optimality can be obtained by using the expressions in \eqref{eq:41} for Theorems~\ref{thm:3} and \ref{thm:4}. If the canonical link is used, then the expressions in \eqref{eq:41} simplify to
\begin{align*}
  \dm(Z_i,\btheta)=\{y_i-c'(\x_i\tp\bbeta)\}\x_i
  \quad\text{and}\quad
  \ddm(Z_i,\btheta)=-c''(\x_i\tp\bbeta)\x_i\x_i\tp.
\end{align*}
The following list gives the forms of $\m(Z_i,\btheta)$,  $\dm(Z_i,\btheta)$, and $\ddm(Z_i,\btheta)$ for commonly used GLMs with the canonical links.

\begin{itemize}
\item {\bf Normal distribution}, $y_i|\x_i\sim \Nor(\mu_i,\sigma^2)$.
  \begin{itemize}
  \item Canonical link: $g(\mu_i)=\mu_i=\x_i\tp\bbeta$.
  \item Parameter $\btheta=(\bbeta\tp,\sigma^2)\tp$:
    \begin{itemize}
    \item $m(Z_i,\btheta)
      =\frac{-(y_i-\x_i\tp\bbeta)^2}{2\sigma^2}
      -\frac{\log(\sigma^2)}{2}$. 
    \item $\dm(Z_i,\htheta_n) =\frac{1}{\hat\sigma^2}
      \begin{bmatrix}
        \hat\varepsilon_i\x_i\\[2mm]
        \frac{\hat\varepsilon_i^2-\hat\sigma^2}{2\hat\sigma^2}
      \end{bmatrix}$, and
      $\ddM_n(\htheta_n)=\frac{-1}{n\hat\sigma^2}
      \begin{bmatrix}
        \X\tp\X & \0 \\[1mm] \0 & \frac{n}{2\hat\sigma^2}
      \end{bmatrix}$,\\
      where $\X=(\x_1, ..., \x_n)\tp$,
      $\hat\varepsilon_i=y_i-\x_i\tp\hbeta$ and
      $\hat\sigma^2=\onen\sumn\hat\varepsilon_i^2$.
    \end{itemize}
  \item Parameter $\btheta=\bbeta\tp$ when $\sigma^2$ is not of
    interest:
    \begin{itemize}
    \item $m(Z_i,\btheta)=-(y_i-\x_i\tp\bbeta)^2$.
    \item $\dm(Z_i,\htheta_n)$ and $\ddM_n(\htheta_n)$ are the same to
      case of OLS in Example~\ref{example2}.
    \end{itemize}
  \end{itemize}

\item {\bf Binomial distribution}, $y_i|\x_i\sim \mathbb{BIN}(k_i,p_i)$. The problem is often converted to model the ratio $y_i^r=y_i/k_i$.
  \begin{itemize}
  \item Canonical link: $g(p_i)=\log(\frac{p_i}{1-p_i})=\x_i\tp\bbeta$.
  \item Parameter $\btheta=\bbeta$:
    \begin{itemize}
    \item $m(Z_i,\btheta) =
      k_i\{y_i^r\x_i\tp\bbeta-\log(1+e^{\x_i\tp\bbeta})\}$. 
    \item $\dm(Z_i,\htheta_n)=k_i(y_i^r-\hat{p}_i)\x_i,
  \quad\text{ and }\quad
  \ddM_n(\htheta_n)=-\onen\sumn k_i\hat{p}_i(1-\hat{p}_i)\x_i\x_i\tp$,\\
  where $\hat{p}_i=e^{\x_i\tp\hbeta}/(1+e^{\x_i\tp\hbeta})$.
    \end{itemize}
  \end{itemize}
  If $k_i=1$ for all $i$, the results reduce to the case of logistic regression in Example \ref{example1}.

\item {\bf Poisson distribution}, $y_i|\x_i\sim \mathbb{POI}(\mu_i)$.
  \begin{itemize}
  \item Canonical link: $g(\mu_i)=\log(\mu_i)=\x_i\tp\bbeta$.
  \item Parameter $\btheta=\bbeta$:
    \begin{itemize}
    \item $m(Z_i,\btheta)=y_i\x_i\tp\bbeta-e^{\x_i\tp\bbeta}$. 
    \item $\dm(Z_i,\htheta_n)=(y_i-e^{\x_i\tp\bbeta})\x_i$, and
      $\ddM_n(\htheta_n)=-\onen\sumn e^{\x_i\tp\bbeta}\x_i\x_i\tp$
    \end{itemize}
  \end{itemize}

\item {\bf Gamma distribution},
  $y_i|\x_i\sim \mathbb{GAM}(\nu, \mu_i)$, with density function 
  \begin{equation}\label{eq:18}
    f(y_i)=\frac{\nu^\nu}{\Gamma(\nu)\mu_i^\nu}\
    y_i^{\nu-1}\ e^{-\frac{\nu y_i}{\mu_i}}, \;\; y_i>0,
  \end{equation}
  where $\nu$ is the shape parameter and $\mu_i$ is the mean parameter.\footnote{A Gamma distribution is also often parameterized in terms of the shape and rate parameters or the shape and scale parameters. With our notations here, the shape and rate parameters are $\nu$ and $\nu/\mu_i$, respectively, and the shape and scale parameters are $\nu$ and $\mu_i/\nu$, respectively.}
  \begin{itemize}
  \item Canonical link: $g(\mu_i)=\frac{-1}{\mu_i}=\x_i\tp\bbeta$.
  \item Parameter $\btheta=(\bbeta\tp, \nu)\tp$:
    \begin{itemize}
    \item $m(Z_i,\btheta)=\nu y_i\x_i\tp\bbeta
      +\nu \log(-\x_i\tp\bbeta)+\nu\log\nu
      +(\nu-1)\log(y_i)-\log\{\Gamma(\nu)\}$.
    \item $\dm(Z_i,\htheta_n)=
      \begin{bmatrix}
        \hat\nu\Big(y_i+\frac{1}{\x_i\tp\hbeta}\Big)\x_i \\
        y_i\x_i\tp\hbeta+\log(-\x_i\tp\hbeta)
        +\log(\hat{\nu})+1+\log(y_i)
        -\frac{\Gamma'(\hat{\nu})}{\Gamma(\hat{\nu})}
      \end{bmatrix}$, \\
      and $\ddM_n(\htheta_n)=
      \begin{bmatrix}
        -\frac{\hat\nu}{n}\sumn\frac{1}{(\x_i\tp\hbeta)^2}\x_i\x_i\tp
        & \0 \\ \0 & \frac{1}{\hat{\nu}}
        -\frac{\Gamma''(\hat{\nu})\Gamma(\hat{\nu})-\{\Gamma''(\hat{\nu})\}^2}
        {\{\Gamma(\hat{\nu})\}^2}\end{bmatrix}$,
      where $\Gamma'(\hat{\nu})$ and $\Gamma''(\hat{\nu})$ are the first and second derivative of $\Gamma(\nu)$ evaluated at $\hat{\nu}$.
    \end{itemize}
  \item Parameter $\btheta=\bbeta$:
    \begin{itemize}
    \item $m(Z_i,\btheta)=y_i\x_i\tp\bbeta+\log(-\x_i\tp\bbeta)$.
    \item $\dm(Z_i,\htheta_n)=\Big(y_i+\frac{1}{\x_i\tp\hbeta}\Big)\x_i$, and $\ddM_n(\htheta_n)=-\onen\sumn \frac{1}{(\x_i\tp\hbeta)^2}\x_i\x_i\tp$.
    \end{itemize}
  \end{itemize}
   If $\nu=1$ in \eqref{eq:18}, then the Gamma distribution reduces to an exponential, and thus the results reduce to the case of exponential regression. For inverse Gamma distribution, one can use the reciprocal transformation, i.e., $1/y_i$, to convert the problem to Gamma distribution. 
\end{itemize}
\end{example}

\bibliographystyle{agsm}
\bibliography{ref}

\end{document}